\newtheorem{remark}{Remark}[section]
\newtheorem{lemma}{Lemma}[section]
\newtheorem{theorem}{Theorem}[section]
\newtheorem{definition}{Defintion}[section]
\begin{document}
\author{Ping Lin\thanks{School of Mathematics and Statistics of Northeast Normal University, Changchun, 130024, P. R. China.(linp258@nenu.edu.cn).}
\and Hanbing Liu\thanks{ School of Mathematics and Physics of China University of
Geoscience, Wuhan, 430074, P.R.China.({
hanbing272003@aliyun.com}).}
\and Gengsheng Wang \thanks{Center for Applied Mathematics, Tianjin University, Tianjin, 300072, P. R. China.(wanggs62@yeah.net).}}

\title
 {Output feedback  stabilization for heat equations with sampled-data controls}
 \date{}

\maketitle
\begin{abstract}
In this paper, we build up an  output feedback law
to stabilize a sampled-data controlled   heat equation (with a potential) in a bounded domain $\Omega$. The feedback law
abides   the following  rules: First, we divide equally the time interval $[0,+\infty)$ into infinitely many disjoint  time periods, and  divide  each  time period  into three disjoint subintervals.
 Second, for each  time period, we  observe a solution over an open subset of $\Omega$
    in the first subinterval;  take sample from outputs at one time point of the first subinterval;  add a time-invariant output feedback control over another  open subset of $\Omega$ in the second subinterval;
   let the equation evolve free in the last subinterval. Thus, the corresponding
 feedback control is of  sampled-data. Our feedback law has the following advantages:
 the sampling period (which is the length of the above time period) can be arbitrarily taken;
  the feedback law has an explicit expression in terms of the sampling period;
   the behaviors of the norm of the feedback law, when the sampling period goes to zero or infinity, are clear.
The construction of the feedback law is based on two  kinds of approximate null-controllability
for heat equations. One has  time-invariant controls, while another has  impulse controls. The studies of the aforementioned controllability with
time-invariant controls need  a new   observability inequality
for heat equations built up in the current work.

\end{abstract}
\keywords{sampled-data control; output feedback stabilization; partial null approximate controllability; minimal norm control; heat equation.}\\
{\bf AMS subject classifications.} 93D15; 93C57; 93D22; 93C20; 93D25.

\section{Introduction}
\label{}

 We start with introducing the controlled equation. Let $\Omega\subset \mathds{R}^d, d\geq 1$, be a bounded domain with a $C^2$ boundary $\partial \Omega$. Let $V(\cdot)$ be a nonzero function in $L^\infty(\Omega)$ with its norm $\|\cdot\|_\infty$. Define an operator $A: D(A)(:=H_0^1(\Omega)\cap H^2(\Omega))\rightarrow L^2(\Omega)$ by $Ay:=\triangle y-Vy$ for each $y\in H_0^1(\Omega)\cap H^2(\Omega)$.
Let $\omega\subset\Omega$ be an  open and nonempty  subset
and let $\mathds{1}_\omega$ be the zero extension operator from $L^2(\omega)$ to $L^2(\Omega)$.   Consider the following  controlled equation:
\begin{equation}\label{e101}
 \left\{\begin{array}{ll}
 y'(t)- Ay(t)=\mathds{1}_\omega u(t), \;\; t>0,\\
  y(0)\in L^2(\Omega),
\end{array}\right.
\end{equation}
where $u(\cdot)\in L^\infty(0, +\infty; L^2(\omega))$ is a sampled-data control.
We assume that the operator $-A$ has  at least one negative eigenvalue. Then the equation (\ref{e101}) with the null control is unstable.
\subsection{Notations}
The following notations are used throughout this paper:  Write $\|\cdot\|$ and $\langle \cdot, \cdot\rangle$  for the norm and the inner product of $L^2(\Omega)$ respectively. The norm and the inner product of $L^2(\omega)$ are denoted by $\|\cdot\|_\omega$ and $\langle \cdot, \cdot\rangle_\omega$ respectively.  Write $\mathds{R}^+=(0, +\infty)$, $\mathds{N}=\{0,1,2,\dots\}$ and $\mathds{N}^+=\{1,2,3,\dots\}$.
The usual norm in $\mathds{R}^d$, with $d\in \mathds{N}^+$,  is denoted by $|\cdot|_d$.
Let
   \begin{equation}\label{wang1.2}
   \gamma_0=\|V(\cdot)\|_{L^\infty(\Omega)}.
   \end{equation}
Let $\{\lambda_j\}_{j=1}^\infty$, with
\begin{equation}\label{wanggs1.3}
\lambda_1\leq\lambda_2\leq\cdots \leq\lambda_m\leq 0<\lambda_{m+1}\leq\cdots
\end{equation}
 be the family of all  eigenvalues of $-A$ and let
 $\{\xi_j\}_{j=1}^\infty$ be the family of the corresponding
 eigenfunctions, which forms an  orthonormal basis of $L^2(\Omega)$.
        For each $M\in\mathds{N}^+$,  let $X_M=\mathrm{span}\{\xi_i\}_{i=1}^M$, and let $P_M$
   be the orthogonal projection from $L^2(\Omega)$ onto $X_M$. Given a set $E$ in a  topological space $F$, write $\chi_E(\cdot)$ for its characteristic function.  Given a matrix $B$, we use  $B'$  and $B^{-1}$ to denote its transposition and inverse respectively.
      Given two  functions as $s\rightarrow s_0$ for some $s_0\geq 0$ (or  $s_0=+\infty$), the notation $f(s)=O(g(s))$ means that
   $\lim_{s\rightarrow s_0}f(s)$ and $\lim_{s\rightarrow s_0}g(s)$ have the same order.
   Write  $C(\cdots)$ for a positive constant depending on what are enclosed  in the brackets.

\subsection{Aims, previous works and motivations}

 {\bf Aims.}   The first aim of this paper is to design an output feedback law to stabilize exponentially the equation (\ref{e101}) with the feedback control. The feedback law abides   the following  rules: Let $T>0$. Write $[0, +\infty)=\bigcup_{i=0}^{+\infty} [2iT, (2i+2)T)$.
 Let $\omega_1\subset\Omega$ be another open and nonempty subset.
 In each subinterval $[2iT, (2i+2)T)$, we do what follows: First, in $[2iT, (2i+1)T)$, observe a solution $y(\cdot)$
 over   $\omega_1$; Second, take sample from the output $\mathds{1}_{\omega_1}^*y(\cdot)$ at the time instant $\frac{3}{4}T+2iT$; (Here, $\frac{3}{4}T$ can be any time before $T$.) Third, in $[(2i+1)T, (2i+\frac{3}{2})T)$,
  apply over $\omega$
 a time-invariant control, which is the feedback of the sampled output $\mathds{1}^*_{\omega_1}y(3/4T+2iT)$; Finally, in $[(2i+\frac{3}{2})T, (2i+2)T)$,
  let the equation evolve free. In such a way, the constructed  feedback control is
 of sampled-data and $2T$ can be viewed as  the sampling period.

 The second aim of this paper is to study   the behaviors of the norm of the feedback law, when the sampling period
 tends to zero or infinity.

  \noindent {\bf Some previous works.}
There have been many literatures on the stabilization of distributed parameter systems. Most of them are concerned about constructing continuous-time state feedback laws (see \cite{05, 06, 22, 23} and the references therein).
However, in  real applications,
one cannot obtain
  full information on  states in many cases.
  Thus,  the studies on output feedback stabilization should be very important.
  Unfortunately,   there has been no systematic theory on this subject. Most of publications on this issue focus on how to construct  output feedback  laws
  to stabilize some special equations (see \cite{24, 25, 26, 27, 01} and references therein). The current paper also only provides a way to design an output feedback law.

  Nowadays, most controls  are implemented in systems via digital technology. This motivates studies on
  continuous-time process with discrete-time (or piecewise constant) controls. We call such a process as a
   sampled-data controlled system and call such a control as a sampled-data control.
   (See, for instance, \cite{37}.)

   Stabilization problems for parameter distributed systems with sampled-data controls have  been studied in some literatures. In  \cite{09} and \cite{14}, some sufficient conditions on state and output feedback stabilization for some  abstract linear  systems with sampled-data controls were given respectively. In \cite{07} and \cite{13}, some output feedback laws for a class of
    1-D semilinear parabolic equations with sampled-data controls
   were  designed. In these two works,
   the space interval was divided into sufficiently small subintervals. The observation in \cite{07} is made on the endpoints of these subintervals, while the output in \cite{13} is the averaged state measurement on each subintervals.
        In \cite{28}, two types
        of feedback laws to stabilize 1-D  linear parabolic equations with boundary sampled-data controls were given
        by the emulation of the reduced model design and the emulation of the back-stepping design respectively. All of the above works  start with designing feedback laws  stabilizing  continuous-time systems with  continuous-time controls, and then
        design the feedback sampled-data controls by using  the aforementioned feedback laws   and the time-discretizing outputs.
         Such method is called the indirect method.
        Instead of using the indirect method, the works \cite{04} used approximate models to construct the sampled-data feedback stabilization laws.
        The method to design the feedback controls  in \cite{04}  is as follows: First,  discretize  continuous-time  state systems (including states, outputs and controls) in time; Second,  design  feedback laws for  discretizing state systems; Finally,  let the aforementioned discretizing feedback laws play the roles of
        sampled-data feedback laws for the original systems. All these  works
         need that sampling periods are sufficient small.
        This requirement is from  their methods  designing
        the feedback laws.

        We  mention  works   \cite{17, 10, 11}, where feedback laws are not static but time-periodic, and the sampling period is not necessary to be sufficient small.
        In \cite{17}, a type of time-periodic  output
        feedback        law
        was built up to  stabilize a type of neutral systems with sampled-data controls. The works \cite{10} and \cite{11} considered a generalized sampling operator, which is a kind of weighted average of the output over each sampling period. Some necessary and sufficient conditions for the stabilization of a class of linear distributed system were studied in these two works. It deserves to mention what follows:
        Since time-periodic feedback laws are much more powerful than time-invariant feedback laws (see \cite{23}), the authors of \cite{17, 10, 11} could
         expect the feedback laws  stabilizing systems with bigger sampling periods. As payments,
        the cost of using  such  feedback laws is  higher than that of using time-invariant feedback laws.

        For other sampled-data control problems, we mention here \cite{15} and \cite{03}.

        \noindent {\bf Motivations}. First, from what   mentioned in the
        previous works, we see that most time-invariant output feedback law can only stabilize systems when sampling period is sufficiently small, while the cost of using time-period output feedback laws is much higher than  that of using time-invariant output feedback laws.
         However,  the requirement that  sampling periods are sufficient small is not reasonable from two perspectives: smaller sampling periods require faster,  newer and more expensive hardware; performing all  control computations might not be feasible, when the sampling period is too small (see Section 1.2 in \cite{37}).  These motivate  us   to design time-invariant
          output feedback laws stabilizing sampled-data controlled systems with any  sampling period.

        Second,  it should be  important and interesting to study
        how  sampling periods effect the performance of systems. Indeed, such issue
                 has been extensively studied for the linear regulator problem of finite-dimensional systems (see \cite{33, 34, 35, 36}).
                 However, to our best knowledge, it has not been touched upon how sampling periods effect
                  feedback laws. This motivates us to study
                  the behaviors of the norm of the feedback law when the sampling period tends to zero or
                  infinity.

\subsection{ Main results}
The main results of our work will be precisely presented in Section 4. Here we state them  in plain language.

(i) For any $\gamma>0$ and $T>0$, we can design a feedback law $\mathcal{F}_T: L^2(\omega_1)\rightarrow L^2(\omega)$ so that any solution $y(\cdot)$ to the closed-loop equation:
 \begin{equation}\label{e102}
 \ \ \ \ \  \left\{\begin{array}{ll}
 y'(t)- Ay(t)=\mathds{1}_\omega \sum_{i=0}^{\infty}\chi_{[(2i+1)T, (2i+\frac{3}{2})T)}(t)\mathcal{F}_T(\mathds{1}^*_{\omega_1}y((2i+\frac{3}{4})T)),  \ t>0,\\
  y(0)\in L^2(\Omega),
\end{array}\right.
\end{equation}
satisfies that
\begin{equation}\label{e103}
\|y(t)\|\leq C e^{-\gamma t}\|y(0)\|, \;\; t>0,
\end{equation}
for some positive constant $C=C(T, \gamma, \gamma_0, \Omega, \omega, \omega_1, d)$. (Though $\mathcal{F}_T$ depends on both $\gamma$ and $T$,  we only care  about its dependence on $T$ in this study).

(ii) The feedback law is given in the following manner: First, we can find natural numbers  $M=M(T, \gamma, \gamma_0, \Omega, \omega, \omega_1, d)$ and $N=N(T, \gamma, \gamma_0, \Omega, \omega, \omega_1,d)$, which have  explicit expressions
in terms of $T$, $\gamma$ and $\gamma_0$. With the aid of such expressions, we can estimate $M$ and $N$ in terms of $T$, $\gamma$ and $\gamma_0$;
 Second, we can  find, for each $1\leq j\leq N$, two vectors $f_j\in \mathrm{span} \{\mathds{1}_\omega\xi_i\}_{i=1}^M$ and $h_j\in \mathrm{span} \{\mathds{1}_{\omega_1}\xi_i\}_{i=1}^M$, which have explicit formulas respectively in terms of the  unique minimizers of two explicitly expressed functionals on two $M$-dim spaces; Third, we can write the feedback law $\mathcal{F}_T$ as:
\begin{equation}\label{e104}
\mathcal{F}_T(v)=-\sum_{j=1}^N\langle v, h_j\rangle f_j, \;\; v\in L^2(\omega_1).
\end{equation}

(iii) We have  the following  estimates on  the feedback law:
\begin{equation}\label{e105}
\alpha(T)\leq \|\mathcal{F}_T\|_{\mathcal{L}(L^2(\omega_1); L^2(\omega))}\leq e^{C_1(1+\frac{1}{T})+\frac{\gamma_0T}{4}}e^{C_2(1+\frac{1}{T})\gamma}.
\end{equation}
Here,  $C_1=C_1(\gamma_0, \Omega, \omega, \omega_1, d)>0$ and $C_2=C_2(\gamma_0, \Omega, \omega, \omega_1, d)>0$, and  the function $\alpha(\cdot): (0, +\infty)\rightarrow (0, +\infty)$ satisfies that
\begin{eqnarray}\label{e106}
\alpha(T)=O({1}/{T})\ \mathrm{as}\ T\rightarrow 0; \ \ \ \
\alpha(T)=O(e^{-\frac{\lambda_1T}{4}})\ \mathrm{as}\ T\rightarrow +\infty.
\end{eqnarray}
From these, we see that
\begin{eqnarray}\label{puchongWANG1.10}
\lim_{T\rightarrow 0}\|\mathcal{F}_{T}\|_{\mathcal{L}(L^2(\omega_1); L^2(\omega))}=+\infty
\;\;\mbox{and}\;\;\lim_{T\rightarrow +\infty}\|\mathcal{F}_{T}\|_{\mathcal{L}(L^2(\omega_1); L^2(\omega))}=+\infty.
\end{eqnarray}

About the main results, several notes are given in order.
\begin{itemize}
\item Comparing with feedback laws built up in \cite{09, 14, 07, 13, 28} where the indirect method was applied,
    the advantage of our feedback law is that  the sampling period $2T$  can be any  positive number.
    Moreover, our feedback law has an explicit expression (\ref{e104}).

     \item The above  (iii) provides the following  useful information on $\mathcal{F}_T$:
     $(a)$ It gives both upper and lower bounds for the norm of the feedback law in terms of
     time period. $(b)$
     The cost of the feedback law
      blows up when $T$ goes to either zero or infinity. This guides
       us  to select the sampling period for our feedback law in such a way that  we better take the sampling period neither too small nor  too large.

      \item The second equality in (\ref{puchongWANG1.10})
      can be explained as:
      when using a piecewise function to approach a function over $[0,+\infty)$, the sampling period is larger, the error is bigger.
       However, the first equality in (\ref{puchongWANG1.10}) seems not to be  reasonable. This might be a weakness of our feedback law. One reason  causing it might be as:  our feedback law works for any  sampling period. Because its scope of application is too wide, it is imperfect in some places.
      Another reason causing it may be from our design of the feedback law.
      The key to build our feedback law is the use of
     the following controllability:  for any $\zeta\in L^2(\Omega)$, $\varepsilon>0$ and $T>0$, there is a control $u\in L^2(\omega)$
     so that
     the equation:  $z'(t)-Az(t)=\mathds{1}_\omega u, t\in(0, T/2)$ has  such a solution $z(\cdot)$ that $z(0)=\zeta$ and $\|P_Mz(T/2)\|\leq \varepsilon \|\zeta\|$. {\it Moreover, the cost of such a control is of order $e^{\frac{C}{T}}$
     in term of $T$.} (In the next section of this paper, we will show such controllability  by building up a new type of observability inequality.)
      The cost of the null controllability and the approximate controllability for heat equations have been extensively studied (see \cite{18, 19, 21}). It is acknowledged that the order $e^{\frac{C}{T}}$ for the upper bound of the cost for controls is sharp in some sense (see \cite{21}). From this perspective,  it is not surprise  that the norm of our feedback law goes to infinity when $T$ goes to zero.

  \item From the right hand side of (\ref{e102}), we can  see that our feedback control does not need to follow the rule:
      adding control with a feedback form simultaneously after taking sample. Instead of this,
      we take sample at time $(2i+\frac{3}{4})T$, and after a time delay, we add a feedback control
       over $[(2i+1)T, (2i+\frac{3}{2})T)$. Sometimes, such time delay could be useful in practical applications.

  \item  Our feedback law is constructed for any decay rate  $\gamma>0$. Such kind of stabilization is called the rapid stabilization. About this subject, we would like to mention the works \cite{29, 30, 31}.

\item  In the design of our feedback law, we borrowed some ideas from   \cite{01}, where the output feedback stabilization  for impulse controlled
    heat equations was studied.

\end{itemize}

\subsection{ Plan of this paper} The rest of this paper is organized as follows: Section 2 gives  a type of observability inequality and the partial null approximate controllability for  heat equations with time-invariant controls. Section 3 studies the norm optimal control problems governed by  heat equations with time-invariant controls and with
impulse controls respectively.
The  last section presents and proves   our main results.

\section{Partial null approximate controllability with a cost}
\setcounter{equation}{0}
Throughout this section, we arbitrarily fix $T_1$ and $T_2$, with $0\leq T_1<T_2$.
Given  $\zeta\in L^2(\Omega)$ and $u\in L^2(\omega)$, write  $y(\cdot;\zeta,u)$ for the solution to  the following  equation:
\begin{equation}\label{e201}
 \left\{\begin{array}{ll}
 y'(t)- A y(t)=\mathds{1}_\omega u,
 \   \ \ t\in (T_1 ,T_2),\\
  y(T_1)=\zeta.
\end{array}\right.
\end{equation}
\begin{definition}\label{def201}
The equation (\ref{e201}) is said to have the partially null approximate controllability with a cost, if for any
$M\in \mathds{N}^+$ and $\varepsilon >0$, there is  $C(\varepsilon, M, T_2-T_1)>0$
so that for each $\zeta\in L^2(\Omega)$,
 there is   $u^{\zeta}\in L^2(\omega)$ satisfying that
\begin{equation}\label{e202}
\frac{1}{[C(\varepsilon, M, T_2-T_1)]^2}(T_2-T_1)\|u^{\zeta}\|^2_{\omega}+\frac{1}{\varepsilon^2}\|P_My(T_2;\zeta,u^{\zeta})\|^2\leq \|\zeta\|^2.
\end{equation}
\end{definition}
\begin{remark}\label{rem201}
(i) In Definition \ref{def201},  $C(\varepsilon, M, T_2-T_1)$
can be treated as a cost for the control  $u^\zeta$, when $\zeta$ is an unit vector.
The reason is as: the control $u^\zeta$, when replaces $u$ on the right hand side of (\ref{e201}),
 is treated as a function: $t\rightarrow \chi_{(T_1, T_2)}(t)\mathds{1}_\omega u^\zeta$, $t\in(T_1,T_2)$. The $L^2(T_1, T_2; L^2(\omega))$-norm of this function is $\sqrt{T_2-T_1}\|u^{\zeta}\|_{\omega}$.

 (ii) Studies of the above-mentioned controllability is related to studies of the following adjoint equation:
\begin{equation}\label{e211}
 \left\{\begin{array}{ll}
 \varphi'(t)+A \varphi(t)=0,
 \   \ \ \ \ t\in (T_1 ,T_2),\\
 \varphi(T_2)=\Phi,
 \end{array}\right.
\end{equation}
where $\Phi\in L^2(\Omega)$. We write
 $\varphi(\cdot; T_2, \Phi)$ for the solution to the  equation
 (\ref{e211}).
 \end{remark}
For each $k\in \mathbb{N}^+$, we define a function $f_k: \mathds{R}^{k}\rightarrow \mathds{R}$ in the following manner:
\begin{equation}\label{203}
f_k(d_1, d_2, \dots, d_k)=\frac{|\langle \sum_{i=1}^{k}d_i\mathds{1}^*_\omega\xi_i, \mathds{1}^*_\omega\xi_{k+1}\rangle_\omega|}{\|\mathds{1}^*_\omega\xi_{k+1}\|_\omega},\;\;
(d_1,d_2,\dots, d_k)\in \mathbf{B}_k^\omega,
\end{equation}
where
\begin{equation}\label{e204}
\mathbf{B}_k^\omega=\Big\{(d_1, d_2, \cdots, d_k)\in \mathds{R}^k\; :\;\Big\|\sum_{i=1}^{k}d_i\mathds{1}^*_\omega\xi_i \Big\|_\omega=1\Big\}.
\end{equation}
It will be proved later that $f_k$ takes its maximum over $\mathbf{B}_k^\omega$ (see Lemma \ref{lem201}). We write
\begin{equation}\label{wang2.5}
\theta_k:=\max_{\mathbf{B}_k^\omega}f_k,\;\; k\in \mathbb{N}^+.
\end{equation}
For each $M\in \mathbb{N}^+$, we let
\begin{equation}\label{e206}
\tau_1:=1\;\;\mbox{and}\;\;\tau_M :=\sqrt{\frac{M}{\prod_{k=1}^{M-1}(1-\theta_{k})}},\ M\geq 2.
\end{equation}
 The main result of this section is as:
\begin{theorem}\label{th201}
The equation (\ref{e201})
has the partially null approximate controllability with a cost, i.e.,  given $ \zeta \in L^2(\Omega)$, $M\in \mathds{N}^+$ and $\varepsilon>0$, there is $\tilde{u}\in L^2(\omega)$ so that
\begin{equation}\label{e207}
\|P_My(T_2;\zeta,\tilde{u})\|\leq \varepsilon\|\zeta\|;
\end{equation}
and
\begin{equation}\label{e208}
\sqrt{T_2-T_1}\|\tilde{u}\|_\omega\leq C(\varepsilon, M, T_2-T_1)\|\zeta\|.
\end{equation}
Here
\begin{equation}\label{e209}
C(\varepsilon, M, T_2-T_1)=\tau_Me^{C\big(1+\frac{1}{T_2-T_1}+(T_2-T_1)\gamma_0
+\gamma_0^{\frac{2}{3}}+\sqrt{\frac{1}{T_2-T_1} \ln^+\frac{1}{\varepsilon}}\big)},
\end{equation}
where  $\tau_M$ and $\gamma_0$ are given by (\ref{e206}) and (\ref{wang1.2}) respectively, and where  $C=C(\omega, \Omega)$.
\end{theorem}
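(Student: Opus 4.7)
My approach is to reduce Theorem~\ref{th201} to an observability-type inequality for the backward adjoint equation (\ref{e211}) by a Fenchel/HUM duality, then to prove that inequality by combining a finite-dimensional Gram--Schmidt-type lower bound (with constant $\tau_M$) with a spectral Lebeau--Robbiano estimate for $A$. Concretely, I set $\tau := T_2-T_1$ and view the control-to-state map as $u \mapsto y(T_2;\zeta,u) = e^{\tau A}\zeta + \Lambda u$, where $\Lambda: L^2(\omega)\to L^2(\Omega)$ is $\Lambda u = \int_{T_1}^{T_2} e^{(T_2-s)A}\mathds{1}_\omega u\,ds$; its adjoint is precisely the time-integrated observation $\Lambda^*\Phi = \int_{T_1}^{T_2}\mathds{1}_\omega^*\varphi(s;T_2,\Phi)\,ds$ flagged in Remark~\ref{rem201}. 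The requirement (\ref{e207}) becomes $\|P_M\Lambda u + P_M e^{\tau A}\zeta\| \le \varepsilon\|\zeta\|$; applying Fenchel duality to the least-norm problem with this constraint yields a dual functional of the form
\begin{equation*}
J(\Phi) \;=\; \frac{1}{2}\|\Lambda^*\Phi\|_\omega^2 + \varepsilon\|\zeta\|\cdot\|P_M\Phi\| - \langle P_M e^{\tau A}\zeta,\Phi\rangle,\qquad \Phi\in L^2(\Omega),
\end{equation*}
whose minimizer $\Phi^*$ delivers an optimal control $\tilde u = \Lambda^*\Phi^*$. The cost estimate (\ref{e208}) then reduces to an observability
\begin{equation*}
\|P_M\Phi\|^2 \;\le\; D^2\|\Lambda^*\Phi\|_\omega^2 + \varepsilon^2\|\Phi\|^2,\qquad \Phi\in L^2(\Omega),
\end{equation*}
with $D = C(\varepsilon,M,\tau)/\sqrt{\tau}$.

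To prove the above observability I would split $\Phi = P_N\Phi + (I-P_N)\Phi$ for a cutoff $N\ge M$ to be tuned. For the low-frequency block, write $P_N\Phi = \sum_{j=1}^N c_j\xi_j$, so that $\Lambda^*P_N\Phi = \sum_{j=1}^N c_j\beta_j\mathds{1}_\omega^*\xi_j$ with $\beta_j = \int_0^\tau e^{-\lambda_j s}\,ds$. Unique continuation for the elliptic operator $A = \Delta - V$ guarantees the linear independence of $\{\mathds{1}_\omega^*\xi_j\}_{j\ge 1}$ in $L^2(\omega)$, hence $\theta_k < 1$ for every $k\ge 1$ (this is Lemma~\ref{lem201}, used here as a black box). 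An induction on $N$, precisely the one encoded in the definition (\ref{e206}) of $\tau_N$, then gives the Gram--Schmidt-type lower bound $\|\sum_{j=1}^N d_j\mathds{1}_\omega^*\xi_j\|_\omega \ge \tau_N^{-1}(\sum_{j=1}^N d_j^2)^{1/2}$. Combined with the elementary lower bound $\beta_j \ge c(\tau,\gamma_0)$, which follows from Weyl's law $|\lambda_j| \le C(j^{2/d}+\gamma_0)$ for $j\le N$, this yields $\|P_N\Phi\| \le (\tau_N/c(\tau,\gamma_0))\|\Lambda^*P_N\Phi\|_\omega$, and in particular controls $\|P_M\Phi\| \le \|P_N\Phi\|$ through the observation.

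The high-frequency block $(I-P_N)\Phi$ is handled by backward semigroup decay together with a Lebeau--Robbiano spectral inequality. For $j > N \ge m$ one has $\lambda_j > 0$, so $\|e^{(T_2-s)A}(I-P_N)\Phi\| \le e^{-\lambda_{N+1}(T_2-s)}\|(I-P_N)\Phi\|$; choosing $N$ with $\lambda_{N+1}\tau \ge c\ln^+(1/\varepsilon)$ (equivalently $N \sim (\ln^+(1/\varepsilon)/\tau)^{d/2}$ by Weyl's law) absorbs its contribution into the $\varepsilon\|\Phi\|$ residue, producing the factor $e^{C\sqrt{\ln^+(1/\varepsilon)/\tau}}$ in (\ref{e209}). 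The remaining exponents $e^{C(1+1/\tau+\tau\gamma_0+\gamma_0^{2/3})}$ arise from the Lebeau--Robbiano spectral inequality $\|P_N\psi\|_{L^2(\Omega)} \le e^{C(\sqrt{\lambda_N}+\gamma_0^{2/3})}\|\psi\|_{L^2(\omega)}$, used to bridge the pointwise-in-time observation required by the spectral estimate with our integrated observation $\Lambda^*\Phi$ via Cauchy--Schwarz on a short subinterval $I\subset[T_1,T_2]$ (with backward semigroup estimates on $[T_1,T_2]\setminus I$ contributing the $e^{C\tau\gamma_0}$ term).

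The main obstacle I expect is this last step: converting standard time-distributed observability for $\int_{T_1}^{T_2}\|\mathds{1}_\omega^*\varphi(s)\|_\omega^2\,ds$ (for which Carleman and spectral tools are available) into the form required by our \emph{time-invariant} controls, which involves $\|\int_{T_1}^{T_2}\mathds{1}_\omega^*\varphi(s)\,ds\|_\omega^2$, while keeping the dependence on $\tau$, $\varepsilon$ and $\gamma_0$ sharp enough to reproduce (\ref{e209}). This is precisely the new observability inequality advertised in the abstract; the Gram--Schmidt bound quantified by $\tau_M$ and the Lebeau--Robbiano spectral inequality are known ingredients, but splicing them through the integrated observation and tracking the explicit constant $C(\varepsilon,M,\tau)$ is the genuine technical core.
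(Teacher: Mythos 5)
Your reduction to a dual/observability statement is in the right spirit (the paper's Lemma \ref{lem203} does exactly this), but two things are already off at that level: the correct dual quantity on the left of the observability inequality is $\|\varphi(T_1;T_2,\Phi)\|=\|e^{(T_2-T_1)A}\Phi\|$ for $\Phi\in X_M$, and the dual functional must be posed on $X_M$, not on $L^2(\Omega)$ --- over all of $L^2(\Omega)$ your $J$ loses coercivity in the directions of $\ker\Lambda^*\cap X_M^{\perp}$, and the construction in Remark \ref{rem202}(i) shows that the map $\Phi\mapsto\chi_\omega\int_{T_1}^{T_2}\varphi(t;T_2,\Phi)dt$ has a nontrivial kernel, so an observability inequality valid on all of $L^2(\Omega)$ is precisely what must be avoided here. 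Those points are fixable; the genuine gap is in your proof of the observability inequality itself. The ``Gram--Schmidt'' bound $\|\sum_{j=1}^N d_j\mathds{1}_\omega^*\xi_j\|_\omega\ge\tau_N^{-1}(\sum_{j=1}^N d_j^2)^{1/2}$ is false as stated (already for $N=1$ it asserts $\|\mathds{1}_\omega^*\xi_1\|_\omega\ge 1$). The induction encoded in (\ref{e206}) compares the \emph{weighted} sum $\sum_j\gamma_j\eta_j$ (full-interval integrals) with the \emph{unweighted} sum $\sum_j\eta_j$ (half-interval integrals) of the \emph{same} vectors $\eta_j=\beta_ja_j\mathds{1}_\omega\mathds{1}_\omega^*\xi_j$, so the norms $\|\mathds{1}_\omega^*\xi_j\|_\omega$ cancel; a true lower bound on the Gram matrix costs an extra factor $e^{C(1+\gamma_0^{2/3}+\sqrt{\lambda_N})}$ (Lemma \ref{lm302}), which is exactly the $M$-dependence that the constant (\ref{e209}) does not contain.

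Your high-frequency step also does not close. Because the observation is the integral up to $T_2$, the tail contributes only $\|\Lambda^*(I-P_N)\Phi\|_\omega\le\lambda_{N+1}^{-1}\|(I-P_N)\Phi\|$; there is no factor $e^{-\lambda_{N+1}(T_2-T_1)}$, since near $s=T_2$ the semigroup gives no decay. Multiplying this by the (corrected) low-frequency constant, which grows at least like $e^{C\sqrt{\lambda_N}}\max\{(T_2-T_1)^{-1},\lambda_N\}$, gives a coefficient of $\|\Phi\|$ that diverges as $N$ grows, so it can never be pushed below $\varepsilon$; and even restricted to $X_M$ this route yields a constant carrying $e^{C\sqrt{\lambda_M}}$, not the form (\ref{e209}). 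In the paper the $\varepsilon$-term and the factor $e^{C\sqrt{\frac{1}{T_2-T_1}\ln^+\frac{1}{\varepsilon}}}$ have a different origin: for $\Phi\in X_M$ one applies the single-time interpolation estimate of \cite{01} to the averaged element $\eta=\frac{1}{\sqrt{T_2-T_1}}\int_{T_1}^{\frac{T_1+T_2}{2}}e^{A(\frac{T_1+T_2}{2}-t)}\Phi\,dt$, obtaining the H\"older-type inequality (\ref{e210}) but with the integral over the first half interval only; the $\tau_M$-induction is then used precisely to replace the half-interval integral by the full-interval one (Step 2 in the proof of Lemma \ref{lem202}); finally Young's inequality, optimized in the interpolation exponent $\theta$, converts (\ref{e210}) into the additive form (\ref{e212}) and produces the $\sqrt{\frac{1}{T_2-T_1}\ln^+\frac{1}{\varepsilon}}$ term. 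This bridge from a pointwise-in-time estimate to the time-averaged observation is the step you yourself flag as unresolved at the end, and it is where the proof actually lives; as it stands, your argument neither establishes the inequality you need nor yields the constant (\ref{e209}).
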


To prove Theorem \ref{th201}, we need next two lemmas: Lemma \ref{lem202} and Lemma  \ref{lem203}. The proof of Lemma \ref{lem202} needs the following Lemma \ref{lem201}:
(The proofs of these three lemmas will be given after  the proof of Theorem \ref{th201}.)
\begin{lemma}\label{lem201}
For each  $k\in\mathbb{N}^+$, the function $f_k$ (defined by (\ref{203}), as well as
(\ref{e204})) takes its maximum $\theta_k$ at some point in $\mathbf{B}_k^\omega$. Moreover, it holds that
\begin{equation}\label{e205}
0\leq\theta_k<1.
\end{equation}
\end{lemma}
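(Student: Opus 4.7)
The plan is to reduce both conclusions (attainment of the maximum and the strict bound $\theta_k<1$) to a single linear-independence statement: the restrictions $\mathds{1}_\omega^*\xi_1,\ldots,\mathds{1}_\omega^*\xi_{k+1}$ are linearly independent in $L^2(\omega)$. Granted this, the linear map $(d_1,\ldots,d_k)\mapsto \sum_{i=1}^k d_i\mathds{1}_\omega^*\xi_i$ from $\mathds{R}^k$ to $L^2(\omega)$ is injective, so $\mathbf{B}_k^\omega$ is the preimage of the unit sphere of its range under a linear homeomorphism of $\mathds{R}^k$ onto that range; hence $\mathbf{B}_k^\omega$ is compact. Since $f_k$ is continuous on $\mathds{R}^k$, it attains its maximum on $\mathbf{B}_k^\omega$, and $\theta_k\ge 0$ is obvious from the definition. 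For $\theta_k<1$, Cauchy--Schwarz on $L^2(\omega)$ gives $f_k\le 1$ on $\mathbf{B}_k^\omega$, with equality only if $\sum_{i=1}^k d_i\mathds{1}_\omega^*\xi_i = c\,\mathds{1}_\omega^*\xi_{k+1}$ in $L^2(\omega)$ for some scalar $c$; combined with the unit-norm constraint, this contradicts the linear independence of $\mathds{1}_\omega^*\xi_1,\ldots,\mathds{1}_\omega^*\xi_{k+1}$.

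Hence the substantive task is the linear independence claim. Suppose $u:=\sum_{i=1}^{k+1} d_i\xi_i$ vanishes a.e.\ on $\omega$. I would first group by distinct eigenvalues: let $\mu_1<\cdots<\mu_p$ be the distinct values among $\lambda_1,\ldots,\lambda_{k+1}$ and set $u_j:=\sum_{\lambda_i=\mu_j} d_i\xi_i$, so that $u=\sum_{j=1}^p u_j\in D(A)$ and $Au_j=-\mu_j u_j$. Since $u\in H^2(\Omega)$ vanishes a.e.\ on the open set $\omega$, all derivatives of $u$ up to order two vanish a.e.\ on $\omega$, and in particular $Au=\Delta u-Vu=0$ in $L^2(\omega)$. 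Because each $A^n u$ remains a finite linear combination of the $\xi_i$ and hence stays in $D(A)$, this argument iterates to give $A^n u=0$ in $L^2(\omega)$ for every $n\ge 0$. Writing $A^n u=\sum_{j=1}^p (-\mu_j)^n u_j$ for $n=0,1,\ldots,p-1$ and using that the Vandermonde matrix with distinct nodes $-\mu_1,\ldots,-\mu_p$ is invertible, we obtain $u_j=0$ in $L^2(\omega)$ for each $j$. Each $u_j$ solves the elliptic equation $-\Delta u_j+(V-\mu_j)u_j=0$ in $\Omega$ with $V-\mu_j\in L^\infty(\Omega)$, so the classical weak unique continuation theorem for second-order elliptic operators with bounded potentials, together with connectedness of $\Omega$, forces $u_j\equiv 0$ in $\Omega$. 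Orthonormality of $\{\xi_i\}$ then yields $d_i=0$ for every $i$.

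The main obstacle in this plan is making the iterated vanishing $A^n u=0$ on $\omega$ rigorous: one needs the elementary but crucial fact that the Laplacian of an $H^2$ function that vanishes on an open set vanishes there a.e., together with the observation that staying inside the finite-dimensional span keeps each $A^n u$ in $D(A)$, so that the iteration is well defined. Once this is secured, Vandermonde invertibility and unique continuation finish the linear-independence step, from which both halves of the lemma follow as indicated in the first paragraph. The well-definedness of $f_k$, namely $\|\mathds{1}_\omega^*\xi_{k+1}\|_\omega>0$, is the $k=0$ instance of the same unique continuation argument applied to $\xi_{k+1}$ alone.
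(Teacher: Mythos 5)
Your proposal is correct, and its treatment of the two stated conclusions coincides in spirit with the paper's: compactness of $\mathbf{B}_k^\omega$ plus continuity of $f_k$ gives attainment, and the equality case of the Cauchy--Schwarz inequality at a maximizer would force a nontrivial linear relation among $\mathds{1}^*_\omega\xi_1,\dots,\mathds{1}^*_\omega\xi_{k+1}$, whence $\theta_k<1$. The genuine difference lies in how the underlying linear-independence fact is handled. The paper simply cites it (together with unique continuation for the well-definedness of $f_k$), and obtains boundedness of $\mathbf{B}_k^\omega$ more concretely, by inverting the Gram matrix $B_k$ of $\mathds{1}^*_\omega\xi_1,\dots,\mathds{1}^*_\omega\xi_k$ and expressing $(d_1,\dots,d_k)$ through $B_k^{-1}$; you instead argue abstractly via the injectivity of $d\mapsto\sum d_i\mathds{1}^*_\omega\xi_i$ and equivalence of norms on a finite-dimensional space, which is equally valid. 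More substantially, you supply a self-contained proof of the linear independence of the restricted eigenfunctions: grouping by distinct eigenvalues, iterating $A$ on $u=\sum d_i\xi_i$ (using that weak derivatives of an $H^2$ function vanishing a.e.\ on the open set $\omega$ vanish there, and that each $A^nu$ stays in the finite-dimensional eigenspan, hence in $D(A)$), invoking Vandermonde invertibility to isolate each eigencomponent on $\omega$, and concluding by weak unique continuation for $-\Delta+(V-\mu_j)$ with $L^\infty$ potential on the connected domain $\Omega$. This buys a more self-contained argument at the cost of length; the paper's citation-based route is shorter but leans on the literature for exactly the fact you prove. Both the iteration step and the use of attainment before invoking the equality case are handled correctly in your write-up.
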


\begin{lemma}\label{lem202}
Let $\chi_\omega$ be the characteristic function of $\omega$.
 The following statements are equivalent and are true.\\
 (i) There is $C=C(\Omega, \omega)>0$ so that for any $\theta\in (0,1)$, $M\in \mathds{N}^+$ and  $\Phi\in X_M$,
\begin{eqnarray}\label{e210}
 \|\varphi(T_1; T_2, \Phi)\|
&\leq& \tau_M^{1-\theta}e^{C\big(1+\frac{1}{\theta(T_2-T_1)}+(T_2-T_1)\gamma_0
+\gamma_0^{\frac{2}{3}}\big)}
\|\Phi\|^\theta
\nonumber\\
 &&\cdot\Big\|\chi_\omega\frac{1}{\sqrt{T_2-T_1}}\int_{T_1}^{T_2}\varphi(t; T_2, \Phi)dt\Big\|^{1-\theta},
\end{eqnarray}
where $\tau_M$ is given by (\ref{e206}) and $\varphi(\cdot; T_2, \Phi)$ is the solution to (\ref{e211}).
\\
(ii) For any $\varepsilon>0$, $M\in \mathds{N}^+$ and  $\Phi\in X_M$,
\begin{eqnarray}\label{e212}
&&\|\varphi(T_1; T_2, \Phi)\|^2\nonumber\\
&\leq& \varepsilon^2\|\Phi\|^2
+[C(\varepsilon, M, T_2-T_1)]^2\Big\|\chi_\omega\frac{1}{\sqrt{T_2-T_1}}\int_{T_1}^{T_2}\varphi(t; T_2, \Phi)dt\Big\|^2.
\end{eqnarray}
Here $C(\varepsilon, M, T_2-T_1)$ is given by (\ref{e209})
and $\varphi(\cdot; T_2, \Phi)$ is the solution to (\ref{e211}).
\end{lemma}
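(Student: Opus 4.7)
The plan is to first establish the equivalence (i) $\Leftrightarrow$ (ii) by standard Young/optimization arguments, and then prove (i) directly using Lemma~\ref{lem201} together with parabolic estimates for $\varphi$. The equivalence is a general fact converting between multiplicative (interpolation) and additive ($\varepsilon$-split) observability inequalities; the truth of (i) is where the new observability content of the paper actually resides.

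For (i) $\Rightarrow$ (ii), write $A(\theta)$ for the prefactor on the right-hand side of (\ref{e210}) and set $\mathcal{O}(\Phi) := \chi_\omega (T_2-T_1)^{-1/2}\int_{T_1}^{T_2}\varphi(t;T_2,\Phi)\,dt$. Squaring (i) gives $\|\varphi(T_1;T_2,\Phi)\|^2 \leq A(\theta)^{2(1-\theta)} \|\Phi\|^{2\theta} \|\mathcal{O}(\Phi)\|^{2(1-\theta)}$. The scaled Young inequality $u^\theta v^{1-\theta} \leq \theta\mu u + (1-\theta)\mu^{-\theta/(1-\theta)} v$, applied with $u=\|\Phi\|^2$, $v=A(\theta)^2\|\mathcal{O}(\Phi)\|^2$ and $\theta\mu=\varepsilon^2$, produces (ii) once $\theta$ is chosen to minimize the resulting coefficient of $\|\mathcal{O}(\Phi)\|^2$, whose $\theta$-dependence behaves like $\exp\bigl(C/[\theta(T_2-T_1)]+\theta\ln(1/\varepsilon)\bigr)$. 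The optimizer is $\theta\asymp\bigl((T_2-T_1)\ln^+(1/\varepsilon)\bigr)^{-1/2}$ for small $\varepsilon$, and substituting back produces precisely the $\sqrt{(T_2-T_1)^{-1}\ln^+(1/\varepsilon)}$ term in (\ref{e209}). The converse (ii) $\Rightarrow$ (i) is symmetric: given $\Phi$, minimize the right-hand side of (\ref{e212}) over $\varepsilon>0$, balance $\varepsilon\|\Phi\|$ against $C(\varepsilon,M,T_2-T_1)\|\mathcal{O}(\Phi)\|$, and invert the square-root dependence of $\log C$ on $\log(1/\varepsilon)$ to read off the exponent $\theta\in(0,1)$ in (\ref{e210}).

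The main obstacle is the direct proof of (i). Since $\Phi\in X_M$ forces $\varphi(t;T_2,\Phi)\in X_M$ for every $t$, the time average $\Psi:=\int_{T_1}^{T_2}\varphi(t;T_2,\Phi)\,dt$ also lies in $X_M$. Lemma~\ref{lem201}, combined with an iterated Gram--Schmidt on $\{\mathds{1}_\omega^*\xi_i\}_{i=1}^M$ in $L^2(\omega)$, should yield the finite-dimensional spectral inequality $\|\Psi\|\leq \tau_M\|\mathds{1}_\omega^*\Psi\|_\omega$ that converts the observation quantity into control of the averaged profile. The delicate remaining step is to relate $\|\varphi(T_1;T_2,\Phi)\|$ to $\|\Psi\|$ and $\|\Phi\|$ in an interpolated fashion: a logarithmic-convexity argument (Lebeau--Robbiano style, but localized to $X_M$), combined with the energy identity for $\varphi$ on $(T_1,T_2)$ (whose terms involving $V$ absorb the $(T_2-T_1)\gamma_0$ factor) and with smoothing/Carleman-type bounds on the semigroup (source of the $\gamma_0^{2/3}$ and $1/[\theta(T_2-T_1)]$ contributions), should deliver the full interpolation (\ref{e210}). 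I expect the bookkeeping of constants through the convexity step, and in particular the verification that $\tau_M$ enters only to the power $1-\theta$ rather than $1$, to be the hardest part of the argument.
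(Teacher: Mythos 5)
Your equivalence argument (Part I of the statement) is fine and is essentially the paper's own: Young's inequality with an optimized exponent $\beta=\theta/(1-\theta)$ for (i)$\Rightarrow$(ii), and a choice of $\varepsilon$ proportional to $\|\varphi(T_1;T_2,\Phi)\|/\|\Phi\|$ (together with $\|\varphi(T_1;T_2,\Phi)\|\le e^{\gamma_0(T_2-T_1)}\|\Phi\|$) for the converse. The problem is in your direct proof that (i) is true, and it is a genuine gap, not a bookkeeping issue. Your pivotal claim that Lemma \ref{lem201} plus Gram--Schmidt yields $\|\Psi\|\le \tau_M\|\mathds{1}_\omega^*\Psi\|_\omega$ for $\Psi=\int_{T_1}^{T_2}\varphi(t;T_2,\Phi)\,dt\in X_M$ is false: for $M=1$ one has $\tau_1=1$ and the claim would force $\|\xi_1\|_{L^2(\omega)}\ge\|\xi_1\|_{L^2(\Omega)}=1$, which fails whenever $\omega$ is a proper subdomain. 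The constants $\theta_k$ and $\tau_M$ encode only the mutual angles of the restricted eigenfunctions $\mathds{1}_\omega^*\xi_j$; they carry no lower bound on the sizes $\|\mathds{1}_\omega^*\xi_j\|_\omega$, so they can never convert an $L^2(\Omega)$-norm into an $L^2(\omega)$-observation. That conversion is precisely the hard observability content, and by assigning it to $\tau_M$ you have also left the genuinely difficult analytic step (your ``logarithmic convexity localized to $X_M$ plus Carleman/smoothing bounds'') as an unproved sketch --- in effect you would have to re-derive the Phung--Wang type estimate from scratch.

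The paper's route is structured differently and you should compare against it. Step 1 introduces $\eta:=\frac{1}{\sqrt{T_2-T_1}}\int_{T_1}^{(T_1+T_2)/2}e^{A((T_1+T_2)/2-t)}\Phi\,dt$ and notes that $e^{\frac{T_2-T_1}{2}A}\eta$ is exactly the time average of $\varphi(\cdot;T_2,\Phi)$ over the \emph{first half} of $(T_1,T_2)$; the known single-time interpolation inequality of \cite{01,20} (the source of the factor $e^{C(1+\frac{1}{\theta(T_2-T_1)}+(T_2-T_1)\gamma_0+\gamma_0^{2/3})}$) applied to $\eta$, combined with the elementary spectral bounds $\|\eta\|\le\frac{\sqrt{T_2-T_1}}{2}e^{\gamma_0\frac{T_2-T_1}{2}}\|\Phi\|$ and $\|e^{\frac{T_2-T_1}{2}A}\eta\|\ge\frac{\sqrt{T_2-T_1}}{2}e^{-\gamma_0\frac{T_2-T_1}{2}}\|\varphi(T_1;T_2,\Phi)\|$, gives (\ref{e223}) with the half-interval observation. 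Only in Step 2 do $X_M$, Lemma \ref{lem201} and $\tau_M$ enter, and they do so to compare two $\omega$-localized quantities with each other: writing both averages as combinations of the same vectors $\eta_j=\beta_ja_j\mathds{1}_\omega\mathds{1}_\omega^*\xi_j$ with ratios $\gamma_j=\alpha_j/\beta_j>1$, the iterated estimate based on $\theta_k<1$ yields $\|\chi_\omega\int_{T_1}^{(T_1+T_2)/2}\varphi\|\le\tau_M\|\chi_\omega\int_{T_1}^{T_2}\varphi\|$, which is why $\tau_M$ appears (to the power $1-\theta$) at all. If you want to salvage your plan, you must either import the single-time estimate of \cite{01,20} as the paper does, or replace your false spectral claim by a correct mechanism that produces the $e^{C/(\theta(T_2-T_1))}$-type constant; $\tau_M$ alone cannot do it.
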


\begin{lemma}\label{lem203}
 Equation (\ref{e201}) has the  partially null approximate controllability with a cost if and only if for any $\varepsilon>0$ and $M\in \mathds{N}^+$, there is $C(\varepsilon, M, T_2-T_1)$ so that for any $\Phi\in X_M$,
 the solution $\varphi(\cdot; T_2,\Phi)$ to (\ref{e211}) satisfying that
\begin{eqnarray}\label{e213}
&&\|\varphi(T_1; T_2, \Phi)\|^2\nonumber\\
&\leq& \varepsilon^2\|\Phi\|^2
+[C(\varepsilon, M, T_2-T_1)]^2\Big\|\chi_\omega\frac{1}{\sqrt{T_2-T_1}}\int_{T_1}^{T_2}\varphi(t; T_2, \Phi)dt\Big\|^2.
\end{eqnarray}
Moreover, the constant $C(\varepsilon, M, T_2-T_1)$
in (\ref{e213}) and the  constant $C(\varepsilon, M, T_2-T_1)$ in (\ref{e202}) are the same.
\end{lemma}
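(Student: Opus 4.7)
The plan is a Hilbert Uniqueness Method argument carried out variationally on the finite-dimensional space $X_M$. The starting point is a duality identity: multiplying (\ref{e201}) by $\varphi(\cdot;T_2,\Phi)$, integrating by parts, and using the self-adjointness of $A$, $P_M^*\Phi=\Phi$ for $\Phi\in X_M$, and the time-invariance of $u$, I would obtain
\[
\langle P_My(T_2;\zeta,u),\Phi\rangle=\langle\zeta,\varphi(T_1;T_2,\Phi)\rangle+\sqrt{T_2-T_1}\,\langle u,L\Phi\rangle_\omega,
\]
where I write $L\Phi:=\frac{1}{\sqrt{T_2-T_1}}\mathds{1}_\omega^*\int_{T_1}^{T_2}\varphi(t;T_2,\Phi)\,dt\in L^2(\omega)$, so that (\ref{e213}) reads $\|\varphi(T_1;T_2,\Phi)\|^2\leq\varepsilon^2\|\Phi\|^2+[C]^2\|L\Phi\|_\omega^2$ with $[C]=C(\varepsilon,M,T_2-T_1)$.

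For the direction ``controllability $\Rightarrow$ observability'', I would fix $\Phi\in X_M$ (the case $\varphi(T_1;T_2,\Phi)=0$ being trivial) and set $\zeta:=\varphi(T_1;T_2,\Phi)$. With $u^\zeta$ the control supplied by Definition \ref{def201}, the duality identity rewrites $\|\varphi(T_1;T_2,\Phi)\|^2$ as a sum of two pairings, and a weighted Cauchy--Schwarz inequality yields
\[
\|\varphi(T_1;T_2,\Phi)\|^2\leq\sqrt{\varepsilon^2\|\Phi\|^2+[C]^2\|L\Phi\|_\omega^2}\cdot\sqrt{\frac{\|P_My(T_2;\zeta,u^\zeta)\|^2}{\varepsilon^2}+\frac{(T_2-T_1)\|u^\zeta\|_\omega^2}{[C]^2}}.
\]
The second factor is bounded by $\|\zeta\|=\|\varphi(T_1;T_2,\Phi)\|$ thanks to (\ref{e202}); dividing through reproduces (\ref{e213}) with the same constant.

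For the reverse implication, I would introduce on $X_M$ the quadratic functional
\[
J(\Phi):=\frac{1}{2}\varepsilon^2\|\Phi\|^2+\frac{1}{2}[C]^2\|L\Phi\|_\omega^2-\langle\zeta,\varphi(T_1;T_2,\Phi)\rangle.
\]
The observability hypothesis (\ref{e213}) controls the linear term by the square root of the quadratic part, so $J$ is continuous and coercive on the finite-dimensional $X_M$ and admits a minimizer $\Phi^*$. I would then take $\tilde u:=-\frac{[C]^2}{\sqrt{T_2-T_1}}\,L\Phi^*$. Testing the first-order condition at $\Phi^*$ against an arbitrary $\Psi\in X_M$ and feeding the result into the duality identity shows $P_My(T_2;\zeta,\tilde u)=\varepsilon^2\Phi^*$; testing the same first-order condition against $\Phi^*$ itself produces the key identity
\[
S:=\varepsilon^2\|\Phi^*\|^2+[C]^2\|L\Phi^*\|_\omega^2=\langle\zeta,\varphi(T_1;T_2,\Phi^*)\rangle.
\]
Estimating the right-hand side by $\|\zeta\|\sqrt S$ via Cauchy--Schwarz and (\ref{e213}) yields $S\leq\|\zeta\|^2$; translating $S$ back in terms of $\tilde u$ and $P_My(T_2;\zeta,\tilde u)$ is then exactly the cost bound (\ref{e202}) with the same constant.

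The main subtle point is to match the constants on both sides of the equivalence. A naive comparison $J(\Phi^*)\leq J(0)=0$ gives the cost inequality only up to a multiplicative factor of $4$; instead one must exploit the first-order optimality condition directly to recover the identity $S=\langle\zeta,\varphi(T_1;T_2,\Phi^*)\rangle$, which is the step that guarantees equality of the two constants in (\ref{e202}) and (\ref{e213}).
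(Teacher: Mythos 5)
Your proposal is correct and follows essentially the same route as the paper: the same quadratic functional on $X_M$ (the paper's $F$ with $\hbar=\varepsilon^2$, $\kappa=[C(\varepsilon,M,T_2-T_1)]^2$), the same minimizer-based control, and the Euler--Lagrange identity to match the constant in (\ref{e202}), while the converse direction is the same duality argument with $\zeta=\varphi(T_1;T_2,\Phi)$. The only cosmetic difference is that you close the observability estimate by a weighted Cauchy--Schwarz and division, whereas the paper uses Young's inequality; both give (\ref{e213}) with the identical constant.
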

\begin{remark}\label{rem202}
(i) Given $\theta\in (0,1)$ and $M\in \mathbb{N}^+$,  the inequality (\ref{e210}) does not hold for all $\Phi \in L^2(\Omega)$. This can be proved as follows: Define
$\mathcal{S}: L^2(\Omega)\rightarrow H^1_0(\Omega)\cap H^2(\Omega)$ in the following manner: $\mathcal{S}(\Phi):=\int_{T_1}^{T_2}\varphi(t; T_2,\Phi) dt$, $\;\Phi\in L^2(\Omega)$.
By the properties of the $C_0$-semigroup (see, for instance, Theorem 2.4 in Chapter 1 of \cite{PAZY}),   $\mathcal{S}$ is well defined, i.e.,
$\int_{T_1}^{T_2}\varphi(t; T_2,\Phi) dt\in D(A)=H^1_0(\Omega)\cap H^2(\Omega)$ for each $\Phi\in L^2(\Omega)$. By making use of the
 Galerkin method, we can directly show that $\mathcal{S}$  is bijective. (We omit the detailed proof.) Next, we take a nonzero function $\Psi\in H^1_0(\Omega)\cap H^2(\Omega)$ so that $\Psi=0$ in $\omega$. Since $\mathcal{S}$ is bijective, we have that   $\widetilde{\Phi}=\mathcal{S}^{-1}\Psi\neq 0$ in $L^2(\Omega)$.
 This, along with the backward uniqueness of heat equations, yields that
 $ e^{A(T_2-T_1)}\widetilde{\Phi}\neq 0$, i.e.,
 $\varphi(T_1;T_2,\widetilde{\Phi})\neq 0$.
  Thus, for this $\widetilde{\Phi}$, the left hand side of (\ref{e210}) is
 $\|\varphi(T_1;T_2,\widetilde{\Phi})\|$ which is positive, while the right hand side of (\ref{e210})
 is zero, since
 $ \chi_\omega\int_{T_1}^{T_2}\varphi(T_1;T_2,\widetilde{\Phi})
 =\chi_\omega\mathcal{S}(\widetilde{\Phi})=\chi_\omega\psi=0$.
 Hence,  the function $\widetilde{\Phi}$ does not satisfy the inequality (\ref{e210}).

(ii) The  equation (\ref{e201}) is not null approximate controllable,
i.e., there is $\zeta\in L^2(\Omega)$ and $\varepsilon>0$ so that for any $u\in L^2(\omega)$, $\|y(T_2;\zeta,u)\|\geq \varepsilon$. Here is the proof: Define
$\mathcal{S}_\omega: L^2(\Omega)\rightarrow L^2(\omega)$ in the following manner:
$
\mathcal{S}_\omega(\Phi):=\mathds{1}_\omega^*\int_{T_1}^{T_2}\varphi(t; T_2,\Phi) dt$, $\;
\Phi\in L^2(\Omega)$.
Its adjoint operator reads:
$\mathcal{S}^*_\omega(u)=\int_{T_1}^{T_2}e^{A(T_2-t)}\mathds{1}_\omega udt$, $\; u\in L^2(\omega)$.
Since $\mbox{ker}(\mathcal{S}_\omega)\neq\{0\}$, (This has been proved in (i) of this remark.) we see that
$\mbox{range}(\mathcal{S}^*_\omega)$ is not dense in $L^2(\Omega)$. So there is
 $\zeta_1\in L^2(\Omega)$ and $\varepsilon>0$ so that
 \begin{equation}\label{wanggs2.15}
 \|\mathcal{S}^*_\omega(u)-\zeta_1\|>2\varepsilon\;\;\mbox{for all}\;\;u\in L^2(\omega).
 \end{equation}
 Meanwhile, since  the set $\{e^{A(T_2-T_1)}\zeta:\; \zeta\in L^2(\Omega)\}$ is dense in $L^2(\Omega)$, we can choose $\zeta\in L^2(\Omega)$ so that
 $ \|e^{A(T_2-T_1)}\zeta-\zeta_1\|< \varepsilon$.
 This, along with (\ref{wanggs2.15}), yields that for any
   $u\in L^2(\omega)$,
$$
\|y(T_2; \zeta, u)\|=\|e^{A(T_2-T_1)}\zeta
+\mathcal{S}^*_\omega(u)\|\geq\|\mathcal{S}^*_\omega(u)-\zeta_1\|
-\|e^{A(T_2-T_1)}\zeta-\zeta_1\|>\varepsilon.
$$

(iii) A similar observability inequality to (\ref{e210}) was obtained in \cite{03}
(see Lemma 2.3 in \cite{03}):  There exists $C=C(\Omega,\omega)$ so that for any
$T$, $S$, with $0<S<T$,
\begin{equation}\label{e214}
\ \ \ \ \ \|\varphi(0; T, \Phi)\|\leq e^{C(1+\frac{1}{T-S})}
\|\Phi\|^{\frac{1}{2}}\Big\|\chi_\omega\frac{1}{S}\int_{0}^S\varphi(t; T,\Phi)dt\Big\|^{\frac{1}{2}}\;\;\mbox{for all}\;\;\Phi\in L^2(\Omega),
\end{equation}
where $\varphi(\cdot; T, \Phi)$ solves  the equation:
\begin{equation}\label{e215}
 \left\{\begin{array}{ll}
 \varphi'(t)+\Delta \varphi(t)=0,
 \   \ \ \ \ t\in (0 ,T),\\
 \varphi(T)=\Phi.
\end{array}\right.
\end{equation}
The main differences between (\ref{e210}) and  (\ref{e214}) are as follows: First,
(\ref{e214}) holds for all $\Phi\in L^2(\Omega)$, while (\ref{e210}) holds  for all $\Phi\in X_M$, but not all $\Phi\in L^2(\Omega)$. Second, the integral interval on the righthand side of (\ref{e214}) cannot be the whole interval $[0, T]$, while it can be in (\ref{e210}).

\end{remark}

We now on the position to prove  Theorem \ref{th201}.

\vskip 5pt

\noindent \emph{Proof of Theorem \ref{th201}.}
From Lemma \ref{lem202} and Lemma \ref{lem203}, we see
that for any $\zeta\in L^2(\Omega)$, $M\in \mathbb{N}^+$ and $\varepsilon>0$,
there is $\tilde{u}\in L^2(\omega)$, with (\ref{e208}) where $C(\varepsilon, M, T_2-T_1)$ is given by (\ref{e209}), so that (\ref{e207}) holds. This ends the proof
of Theorem \ref{th201}.
$\hfill\square$

We next prove  Lemmas \ref{lem201},  \ref{lem202}, \ref{lem203} one by one.

\vskip 5pt

\noindent\emph{Proof of Lemma \ref{lem201}.}
Arbitrarily fix $k\in \mathbb{N}^+$. By the unique continuation property for the elliptic equations (see Theorem 15.2.1 in \cite{32}), we find that $\mathds{1}^*_\omega\xi_{k+1}$ is not zero, from which, it follows that $f_k$ is well defined. Moreover,
from (\ref{203}), we can easily check that $f_k$ is continuous.

We now claim that $\mathbf{B}_k^\omega$ is closed and bounded in $\mathbb{R}^k$.
Indeed, the closedness of $\mathbf{B}_k^\omega$ can be easily verified. To show  its boundedness, we arbitrarily fix $(d_1, \cdots, d_k)\in \mathbf{B}_k^\omega$. Write
$\psi:=\sum_{i=1}^{k}d_i\mathds{1}^*_\omega\xi_i$.
  It follow from (\ref{e204}) that $\|\psi\|_\omega=1$. Let
 \begin{equation}\label{wangggs2.19}
 B_k:=\left(
    \begin{array}{ccc}
      \langle\mathds{1}^*_\omega\xi_1,
      \mathds{1}^*_\omega\xi_1 \rangle_\omega & \cdots & \langle\mathds{1}^*_\omega\xi_1,\mathds{1}^*_\omega\xi_k \rangle_\omega \\
      \cdots  & \cdots  & \cdots  \\
      \langle\mathds{1}^*_\omega\xi_k,\mathds{1}^*_\omega\xi_1 \rangle_\omega  & \cdots  &  \langle\mathds{1}^*_\omega\xi_k,\mathds{1}^*_\omega\xi_k \rangle_\omega  \\
    \end{array}
  \right).
  \end{equation}
   Since $\mathds{1}^*_\omega\xi_1, \dots,\mathds{1}^*_\omega\xi_k$ are linearly independent in $L^2(\omega)$
 (see, for instance, Page 38, Section 2.2 in \cite{22}),
 the matrix
 $B_k$ (given by (\ref{wangggs2.19})) is invertible.
 Moreover,  one can easily check  that
  $$
  (d_1, \dots, d_k)'=B_k^{-1}(\langle\psi,\mathds{1}^*_\omega\xi_1 \rangle_\omega , \dots,  \langle\psi,\mathds{1}^*_\omega\xi_k \rangle_\omega )'.
  $$
This, along with the fact that  $\|\psi\|_\omega=1$, yields that $\mathbf{B}_k^\omega$ is bounded. Hence, the continuous function $f_k$ takes its maximum on the closed and bounded subset $\mathbf{B}_k^\omega$ in $\mathbb{R}^k$.

We next show (\ref{e205}).
From (\ref{203}) and (\ref{e204}), we find that $0\leq\theta_k\leq 1$.
By contradiction, we suppose that
 $\theta_k=1$. Then there would be $(d_1, \dots, d_k)\in \mathbf{B}_k^\omega$ so that
$$
\Big|\langle \sum_{i=1}^{k}d_i\mathds{1}^*_\omega\xi_i, \mathds{1}^*_\omega\xi_{k+1}\rangle_\omega\Big|
=\left\|\mathds{1}^*_\omega\xi_{k+1}\right\|_\omega.
$$
Meanwhile, it is clear that
$$
\Big|\Big\langle \sum_{i=1}^{k}d_i\mathds{1}^*_\omega\xi_i, \mathds{1}^*_\omega\xi_{k+1}\Big\rangle_\omega\Big|\leq \Big\|\sum_{i=1}^{k}d_i\mathds{1}^*_\omega\xi_i\Big\|_\omega
\|\mathds{1}^*_\omega\xi_{k+1}\|_\omega=\|\mathds{1}^*_\omega\xi_{k+1}\|_\omega.
$$
From these, we can find $\alpha\neq 0$ so that
$\alpha\sum_{i=1}^{k}d_i\mathds{1}^*_\omega\xi_i=\mathds{1}^*_\omega\xi_{k+1}$.
This contradicts to the linear independence of $\mathds{1}^*_\omega\xi_1, \dots,\mathds{1}^*_\omega\xi_{k+1}$.
Hence, (\ref{e205}) is true.
This ends the proof of Lemma \ref{lem201}.
$\hfill\square$

\vskip 7pt

\noindent\emph{Proof of Lemma \ref{lem202}.}
The proof is organized by two  parts.

\noindent \emph{Part I. We prove that statements  (i) and  (ii) are equivalent.}

To show that
(i)$\Rightarrow$ (ii), we arbitrarily fix $\varepsilon>0$, $M\in \mathbb{N}^+$ and $\Phi\in X_M$.
Recall that $\tau_M$ is given by (\ref{e206}).
It follows from (\ref{e210}) that for any $\theta\in (0,1)$,
\begin{equation}\ \ \ \ \ \label{e217}
\|\varphi(T_1; T_2, \Phi)\|^2\leq (\|\Phi\|^2)^\theta \left(\tau_M^{2}e^{\frac{2C}{1-\theta}\big(1+\frac{1}{\theta(T_2-T_1)}
+(T_2-T_1)\gamma_0
+\gamma_0^{\frac{2}{3}}\big)}\|\chi_\omega\bar{\varphi}\|^2\right)^{1-\theta},
\end{equation}
where $\bar{\varphi}=\frac{1}{\sqrt{T_2-T_1}}\int_{T_1}^{T_2}\varphi(t; T_2, \Phi)dt$. Write
$$
\beta=\frac{\theta}{1-\theta},\; \Lambda=C(1+\frac{1}{T_2-T_1}+(T_2-T_1)\gamma_0+\gamma_0^{\frac{2}{3}}), \; \Upsilon=\frac{C}{T_2-T_1}.
$$
 Applying  Young's inequality to (\ref{e217}) yields that
\begin{eqnarray}\label{e218}
&&\|\varphi(T_1; T_2, \Phi)\|^2\nonumber\\
&\leq&\varepsilon^2\|\Phi\|^2
+(1-\theta)\theta^\beta\frac{1}{\varepsilon^{2\beta}}
\tau_M^{2}e^{2C(1+\beta)\big(1+\frac{1+\beta}{\beta(T_2-T_1)}
+(T_2-T_1)\gamma_0+\gamma_0^{\frac{2}{3}}\big)}\|\chi_\omega\bar{\varphi}\|^2\nonumber\\
&\leq&\varepsilon^2\|\Phi\|^2
+\tau_M^{2}e^{2\Lambda+2\Upsilon
+2\beta(\mathrm{ln}^+\frac{1}{\varepsilon}+\Lambda)
+\frac{2}{\beta}\Upsilon}\|\chi_\omega\bar{\varphi}\|^2.
\end{eqnarray}
By choosing $\theta$ so that  $\beta=\sqrt{\frac{\Upsilon}{\ln^+\frac{1}{\varepsilon}+\Lambda}}$, we obtain from
(\ref{e218}) that
\begin{equation}\label{e219}
\|\varphi(T_1; T_2, \Phi)\|^2\leq \tau_M^{2}e^{8\Lambda+4\sqrt{\Upsilon \ln^+\frac{1}{\varepsilon}}}\|\chi_\omega\bar{\varphi}\|^2+\varepsilon^2\|\Phi\|^2,
\end{equation}
which leads to  (\ref{e212}) with $C(\varepsilon, M, T_2-T_1)$  given by (\ref{e209}).

To show that (ii)$\Rightarrow$(i), we arbitrarily fix $\theta\in (0,1)$, $M\in \mathbb{N}^+$ and $\Phi\in X_M$.
Write  $C=C(\omega,\Omega)$ for a  constant depending only on $\Omega$ and $\omega$, which may vary in different contexts.
 One can directly check that
$$
2C\sqrt{\frac{1}{T_2-T_1} \ln^+\frac{1}{\varepsilon}}\leq \frac{C^2}{2\alpha(T_2-T_1)}+ 2\alpha\ln\Big(e+\frac{1}{\varepsilon}\Big)
\;\;\mbox{for all}\;\;\varepsilon>0, \alpha>0.
$$
From this and  (\ref{e212}), we find  that for any $\varepsilon>0$ and $\alpha>0$,
\begin{eqnarray*}
\|\varphi(T_1; T_2, \Phi)\|^2
\leq \varepsilon^2\|\Phi\|^2
+\tau_M^2e^{2C\big(1+\frac{1}{T_2-T_1}+(T_2-T_1)\gamma_0
+\gamma_0^{\frac{2}{3}}\big)}e^{\frac{C^2}{2\alpha(T_2-T_1)}}
(e+{1}/{\varepsilon})^{2\alpha}\|\chi_\omega\overline{\varphi}\|^2.
\end{eqnarray*}
We may as well assume that $\Phi\neq 0$. Taking  $\varepsilon=\frac{1}{2}\frac{\|\varphi(T_1; T_2, \Phi)\|}{\|\Phi\|}$
 in  the above, and then using the inequality: $\|\varphi(T_1; T_2, \Phi)\|\leq e^{\gamma_0(T_2-T_1)}\|\Phi\|$, we see that for any $\alpha>0$,
\begin{eqnarray*}
&&\frac{3}{4}\|\varphi(T_1; T_2, \Phi)\|^{2(1+\alpha)}
\\ \nonumber
&\leq &\tau_M^2e^{2C\big(1+\frac{1}{T_2-T_1}+(T_2-T_1)\gamma_0
+\gamma_0^{\frac{2}{3}}\big)}e^{\frac{C^2}{2\alpha(T_2-T_1)}}
(e^{\gamma_0(T_2-T_1)+1}+2)^{2\alpha}\|\Phi\|^{2\alpha}\|
\chi_\omega\overline{\varphi}\|^2.
\end{eqnarray*}
 Taking $\alpha>0$ so that  $\theta=\frac{\alpha}{1+\alpha}$, we can obtain from
 the above that
 \begin{eqnarray*}
\|\varphi(T_1; T_2, \Phi)\|
\leq \tau_M^{1-\theta}e^{{C}\big(1+\frac{1}{\theta(T_2-T_1)}
+(T_2-T_1)\gamma_0
+\gamma_0^{\frac{2}{3}}\big)}\|
\Phi\|^{\theta}\|\chi_\omega\overline{\varphi}\|^{1-\theta},
\end{eqnarray*}
which leads to (\ref{e210}).

\noindent\emph{Part II. We show that the proposition (i) is true.}

Arbitrarily fix $\theta\in (0,1)$, $M\in\mathbb{N}^+$ and $\Phi\in X_M$.
Write
\begin{equation}\label{wanggs2.25}
\Phi=\sum_{i=1}^Ma_i\xi_i,\;\;\mbox{with}\;\;a_i\in \mathbb{R}.
\end{equation}
The proof of (i) is divided into the following two steps:

\noindent\emph{Step 1. We  prove that there exists $C=C(\omega, \Omega)$ so that
\begin{eqnarray}\label{e223}
\ \ \ \ \ &&\ \ \|\varphi(T_1; T_2, \Phi)\|\nonumber\\
&\ \ \ \leq & e^{C\big(1+\frac{1}{\theta(T_2-T_1)}+(T_2-T_1)\gamma_0+\gamma_0^{\frac{2}{3}}\big)}
\|\Phi\|^\theta\Big\|\chi_\omega
\frac{1}{\sqrt{T_2-T_1}}\int_{T_1}^{\frac{T_1+T_2}{2}}\varphi(t; T_2, \Phi )dt\Big\|^{1-\theta}.
\end{eqnarray}}

Define
\begin{equation}\label{wanggs2.27}
\eta:=\frac{1}{\sqrt{T_2-T_1}}\int_{T_1}^{\frac{T_1+T_2}{2}}e^{A(\frac{T_1+T_2}{2}-t)}\Phi dt.
\end{equation}
Then
\begin{equation}\label{e225}
\|\eta\|=\Big\|\frac{1}{\sqrt{T_2-T_1}}
\int_{T_1}^{\frac{T_1+T_2}{2}}e^{A(\frac{T_1+T_2}{2}-t)}\Phi dt\Big\|\leq \frac{\sqrt{T_2-T_1}}{2}e^{\gamma_0\frac{T_2-T_1}{2}}\|\Phi\|.
\end{equation}
Moreover, it follows  by (\ref{wanggs2.27}) and (\ref{wanggs2.25}) that
\begin{eqnarray}\label{e226}
\left\|e^{\frac{T_2-T_1}{2}A}\eta\right\|
&=&\frac{1}{\sqrt{T_2-T_1}}\Big[\sum_{i=1}^M\Big(\int_{T_1}^{\frac{T_1
+T_2}{2}}e^{\lambda_i(t-T_1)}dt\Big)^2
(e^{-\lambda_i(T_2-T_1)}a_i)^2\Big]^{\frac{1}{2}}\nonumber\\
&\geq&\frac{\sqrt{T_2-T_1}}{2}e^{-\gamma_0\frac{T_2-T_1}{2}}\left\|\varphi(T_1; T_2, \Phi)\right\|.
\end{eqnarray}
Meanwhile, it follows by (iii) of Theorem 2.1 in \cite{01} (see also \cite{20}) that
\begin{equation}\label{e224}
\|e^{\frac{T_2-T_1}{2}A}\eta\|\leq e^{C\big(1+\frac{1}{\theta(T_2-T_1)}+(T_2-T_1)\gamma_0+\gamma_0^{\frac{2}{3}}\big)}
\|\eta\|^{\theta}\|\chi_\omega e^{\frac{T_2-T_1}{2}A}\eta\|^{1-\theta}.
\end{equation}
Then  from (\ref{e226}), (\ref{e224}) and (\ref{e225}), we see that
\begin{eqnarray*}
\|\varphi(T_1; T_2, \Phi)\|\leq e^{C\big(1+\frac{1}{\theta(T_2-T_1)}+(T_2-T_1)\gamma_0+\gamma_0^{\frac{2}{3}}\big)}
\|\Phi\|^\theta\Big\|\chi_\omega\frac{1}{T_2-T_1}\int_{T_1}^{\frac{T_1
+T_2}{2}}\varphi(t; T_2, \Phi )dt\Big\|^{1-\theta}\\
\leq e^{C\big(1+\frac{1}{\theta(T_2-T_1)}+(T_2-T_1)\gamma_0+\gamma_0^{\frac{2}{3}}\big)}
\|\Phi\|^\theta\Big\|\chi_\omega\frac{1}{\sqrt{T_2-T_1}}\int_{T_1}^{\frac{T_1
+T_2}{2}}\varphi(t; T_2, \Phi )dt\Big\|^{1-\theta},
\end{eqnarray*}
which leads to (\ref{e223}).

\noindent\emph{Step 2. We show that
\begin{equation}\label{e228}
\ \ \ \ \ \ \Big\|\chi_\omega\frac{1}{\sqrt{T_2-T_1}}\int_{T_1}^{\frac{T_1+T_2}{2}}\varphi(t; T_2, \Phi )dt\Big\|\leq \tau_M\Big\|\chi_\omega\frac{1}{\sqrt{T_2-T_1}}\int_{T_1}^{T_2}\varphi(t; T_2, \Phi)dt\Big\|.
\end{equation}}
For each $1\leq j\leq M$, we let
$$
\alpha_j:=\int_{T_1}^{T_2}e^{-\lambda_j(T_2-t)}dt, \;\; \beta_j:=\int_{T_1}^{\frac{T_1+T_2}{2}}e^{-\lambda_j(T_2-t)}dt, \;\; \gamma_j:=\frac{\alpha_j}{\beta_j}, \;\; \eta_j:=\beta_ja_j\mathds{1}_\omega\mathds{1}^*_\omega\xi_j.
$$
Then, by (\ref{wanggs2.25}), we see that
\begin{eqnarray*}
\chi_\omega\int_{T_1}^{\frac{T_1+T_2}{2}}\varphi(t; T_2, \Phi )dt=\sum_{j=1}^M\int_{T_1}^{\frac{T_1+T_2}{2}}
e^{-\lambda_j(T_2-t)}dta_j\mathds{1}_\omega\mathds{1}^*_\omega\xi_j
=\sum_{j=1}^M\eta_j;\nonumber\\
\chi_\omega\int_{T_1}^{T_2}\varphi(t; T_2, \Phi)dt=\sum_{j=1}^M\int_{T_1}^{T_2}e^{-\lambda_j
(T_2-t)}dta_j\mathds{1}_\omega\mathds{1}^*_\omega\xi_j=\sum_{j=1}^M\gamma_j\eta_j.
\end{eqnarray*}
Hence, the inequality (\ref{e228}) is equivalent to the following inequality:
\begin{equation}\label{e229}
\|\sum_{j=1}^M\eta_j\|^2\leq \tau_M^2\|\sum_{j=1}^M\gamma_j\eta_j\|^2.
\end{equation}
 We may as well assume that $M\geq 2$. In order to show (\ref{e229}), we observe that
\begin{eqnarray}\label{e230}
\ \ \ \ &&\Big\|\sum_{j=1}^M\gamma_j\eta_j\Big\|^2
=\Big\|\sum_{j=1}^{M-1}\gamma_j\eta_j\Big\|^2
+2\gamma_M\Big\langle\sum_{j=1}^{M-1}\gamma_j\eta_j, \eta_M \Big\rangle+\gamma_M^2\|\eta_M\|^2\nonumber\\
&\geq&\Big\|\sum_{j=1}^{M-1}\gamma_j\eta_j\Big\|^2
-2\gamma_M\Big|\Big\langle\frac{\sum_{j=1}^{M-1}
\alpha_ja_j\mathds{1}^*_\omega\xi_j}{\|\sum_{j=1}^{M-1}
\alpha_ja_j\mathds{1}^*_\omega\xi_j\Big\|_\omega}, \frac{\mathds{1}^*_\omega\xi_{M}}
{\|\mathds{1}^*_\omega\xi_{M}\|_\omega} \Big\rangle_\omega\Big|
\Big\|\sum_{j=1}^{M-1}\gamma_j\eta_j\Big\|\|\eta_M\|\nonumber\\
&&+\gamma_M^2\|\eta_M\|^2.
\end{eqnarray}
Meanwhile,  we take
$\hat d_j=\frac{\alpha_ja_j}{\|\sum_{j=1}^{M-1}\alpha_ja_j\mathds{1}^*_\omega\xi_j\|_\omega}$,
with $j=1,\dots, M-1$.
One can easily see that $(\hat d_1,\dots, \hat d_{M-1})\in \mathbf{B}_{M-1}^\omega$
(see (\ref{e204})). Thus, we can apply  Lemma \ref{lem201} with $k=M-1$ to see that
$f_{M-1}(\hat d_1,\dots, \hat d_{M-1})\leq\theta_{M-1}$.
This, along with (\ref{e230}), indicates that
\begin{eqnarray}\label{e231}
&&\Big\|\sum_{j=1}^M\gamma_j\eta_j\Big\|^2\nonumber\\
&\geq&\Big\|\sum_{j=1}^{M-1}\gamma_j\eta_j\Big\|^2
-2\gamma_M\theta_{M-1}\Big\|\sum_{j=1}^{M-1}\gamma_j\eta_j\|\|\eta_M\Big\|
+\gamma_M^2\|\eta_M\|^2\nonumber\\
&\geq&(1-\theta_{M-1})\Big(\Big\|\sum_{j=1}^{M-1}\gamma_j\eta_j\Big\|^2
+\gamma_M^2\|\eta_M\|^2\Big).
\end{eqnarray}
By the similar way used in the proof of  (\ref{e231}), we can obtain that
\begin{equation}\label{e232}
\Big\|\sum_{j=1}^{M-1}\gamma_j\eta_j\Big\|^2\geq (1-\theta_{M-2})\Big(\Big\|\sum_{j=1}^{M-2}\gamma_j\eta_j\Big\|^2
+\gamma_{M-1}^2\|\eta_{M-1}\|^2\Big).
\end{equation}
Submitting (\ref{e232}) into (\ref{e231}), we see that
\begin{eqnarray}\label{e233}
&&\Big\|\sum_{j=1}^M\gamma_j\eta_j\Big\|^2
\geq (1-\theta_{M-1})(1-\theta_{M-2})
\Big\|\sum_{j=1}^{M-2}\gamma_j\eta_j\Big\|^2\nonumber\\
&&+(1-\theta_{M-1})(1-\theta_{M-2})\gamma_{M-1}^2\|\eta_{M-1}\|^2
+(1-\theta_{M-1})\gamma_M^2\|\eta_M\|^2.
\end{eqnarray}
Proceeding the above step by step, using facts:
 $\Big\|\sum_{j=1}^M\eta_j\Big\|^2\leq M\sum_{j=1}^M\|\eta_j\|^2$ and $\gamma_j>1$ for all $j=1, 2, \dots, M$,
    we find that
\begin{eqnarray*}
\Big\|\sum_{j=1}^M\gamma_j\eta_j\Big\|^2
&\geq&\gamma_1^2\prod_{k=1}^{M-1}(1-\theta_{M-k})\|\eta_1\|^2
+\sum_{j=2}^{M}\Big[\gamma_j^2\prod_{k=1}^{M-j+1}(1-\theta_{M-k})\|\eta_j\|^2\Big]\\
&\geq& \frac{1}{M}\prod_{k=1}^{M-1}(1-\theta_{M-k})\Big\|\sum_{j=1}^M\eta_j\Big\|^2,
\end{eqnarray*}
which leads to (\ref{e229}). Hence,  (\ref{e228}) is true. Then by (\ref{e228})
and  (\ref{e223}), we obtain  (\ref{e210}).

Thus, we end the proof of Lemma \ref{lem202}.
$\hfill\square$

\vskip 7pt

\noindent\emph{Proof of Lemma \ref{lem203}.} The proof is organized by the following three steps:

\noindent \emph{Step 1. We show that (\ref{e213}) implies  the partially null approximate controllability with a cost. }

 Suppose that  (\ref{e213}) holds. Arbitrarily fix $\varepsilon>0$, $M\in\mathbb{N}^+$
 and $\zeta\in L^2(\Omega)$. Let $C(\varepsilon, M, T_2-T_1)$ be given by (\ref{e213}).
 Then let
 \begin{equation}\label{wanggs2.37}
 \hbar:=\varepsilon^2\;\;\mbox{and}\;\; \kappa:=[C(\varepsilon, M, T_2-T_1)]^2.
  \end{equation}
  Define the  functional $F: X_M\rightarrow \mathds{R}$ via
$$
F(\Phi)=\frac{\kappa}{2}\Big\|\mathds{1}_\omega^*
\frac{1}{\sqrt{T_2-T_1}}\int_{T_1}^{T_2}e^{A(T_2-t)}\Phi dt\Big\|_\omega^2+\frac{\hbar}{2}\|\Phi\|^2-\langle \zeta, e^{(T_2-T_1)A}\Phi\rangle,\; \Phi\in X_M.
$$
One can easily check that
it is  strictly convex, coercive and of $C^1$. Thus it  has a unique minimizer $\bar{\Phi}\in X_M$ so that $F(\bar{\Phi})=\min_{\Phi\in X_M}F(\Phi)$.
The Euler-Lagrange equation of $F$ associated with $\bar{\Phi}$  is as:
\begin{eqnarray}\label{e234}
&&\kappa \Big\langle \mathds{1}_\omega^*\frac{1}{\sqrt{T_2-T_1}}\int_{T_1}^{T_2}e^{A(T_2-t)}\Phi dt, \; \mathds{1}_\omega^*\frac{1}{\sqrt{T_2-T_1}}\int_{T_1}^{T_2}e^{A(T_2-t)}\bar{\Phi}dt \Big\rangle_\omega\nonumber\\
&+&\hbar\langle \Phi, \bar{\Phi}\rangle
=\langle \zeta, \;e^{(T_2-T_1)A}\Phi\rangle\;\;\mbox{for all}\;\; \Phi\in X_M.
\end{eqnarray}
Define a time-invariant control:
\begin{equation}\label{wang2.39}
\tilde{u}:=-\kappa \frac{1}{T_2-T_1}\mathds{1}_\omega^*\int_{T_1}^{T_2}e^{A(T_2-t)}\bar{\Phi}dt.
\end{equation}

By (\ref{e234}) (where $\Phi=\bar{\Phi}$),
(\ref{wanggs2.37}) and
(\ref{e213}), we can use a standard way (see the proof of Theorem 3.1 in \cite{01}) to show that  $\tilde{u}$ satisfies  (\ref{e202}) with the above-mentioned $C(\varepsilon, M, T_2-T_1)$.
 So the equation (\ref{e201}) has the  partially null approximate controllability with a cost.

\noindent\emph{Step 2. We prove that the partial null approximate controllability with a cost implies (\ref{e213}). }

Suppose that the equation (\ref{e201}) has the partially null approximate controllability with a cost.
Arbitrarily fix $\varepsilon>0$, $M\in\mathbb{N}^+$ and $\Phi\in X_M$. Then
there is $C(\varepsilon, M, T_2-T_1)>0$ so that for any $\zeta\in L^2(\Omega)$,
there is $u^{\zeta}\in L^2(\omega)$ satisfying (\ref{e202}).
Multiplying  (\ref{e201}) (where $u=u^{\zeta}$)   by $\varphi(\cdot;;T_2,\Phi)$, and integrating on $[T_1, T_2]$, we see that
\begin{eqnarray*}
\langle y(T_2;\zeta,u^\zeta), \;\Phi\rangle-\langle\zeta, \varphi(T_1;\; T_2, \Phi)\rangle
=\int_{T_1}^{T_2}\langle \mathds{1}_\omega u^\zeta, \varphi(t;\; T_2, \Phi) \rangle dt.
\end{eqnarray*}
This, together with  (\ref{e202}), yields that for any $\zeta\in L^2(\Omega)$,
\begin{eqnarray*}
&&\langle\zeta, \varphi(T_1; T_2, \Phi)\rangle\nonumber\\
&\leq& \frac{\|P_My(T_2;\zeta,u^\zeta)\|^2}{2\varepsilon^2}
+\frac{(T_2-T_1)\|u\|_\omega^2}{2[C(\varepsilon, M, T_2-T_1)]^2}\nonumber\\
&&+\frac{\varepsilon^2}{2}\|\Phi\|^2+\frac{[C(\varepsilon, M, T_2-T_1)]^2}{2(T_2-T_1)}\Big\|\chi_\omega\int_{T_1}^{T_2} \varphi(t; T_2, \Phi) dt\Big\|^2\nonumber\\
&\leq& \frac{1}{2}\|\zeta\|^2+\frac{\varepsilon^2}{2}\|\Phi\|^2
+\frac{[C(\varepsilon, M, T_2-T_1)]^2}{2(T_2-T_1)}\Big\|\chi_\omega\int_{T_1}^{T_2} \varphi(t; T_2, \Phi) dt\Big\|^2.
\end{eqnarray*}
By taking $\zeta=\varphi(T_1; T_2, \Phi)$ in the above, we
are led to (\ref{e213}).

\vskip 5pt

\noindent\emph{Step 3. We prove that  the constant $C(\varepsilon, M, T_2-T_1)$
in (\ref{e213}) and the  constant $C(\varepsilon, M, T_2-T_1)$ in (\ref{e202}) are the same. }

Indeed, this can be seen from Step 1 and Step 2 easily.

Hence, we end the proof of Lemma \ref{lem203}.
$\hfill\square$

\section{Minimal norm controls }
\setcounter{equation}{0}
 In this section, we will study properties of time-invariant controls which have the minimal norm among all time-invariant controls
realizing the partially null approximate controllability.
We also study some minimal norm impulse control problems.
These will help us to build up  the feedback law.
Throughout this section, we arbitrarily fix $T_1$ and $T_2$  so that  $0\leq T_1<T_2$.

\subsection{Basic properties on minimal norm controls }
Arbitrarily fix  $\zeta\in L^2(\Omega)$, $M\in \mathds{N}^+$ and $\varepsilon>0$.
Consider the minimal norm  control problem:
\begin{equation}\ \ \ \ \ \ \ \label{e301}
(SNP) \;\;\;\;\ \ \ \mathcal{N}:=\inf\{\sqrt{T_2-T_1}\|f\|_\omega\; :\; f\in L^2(\omega) \ \mbox{s.t.}\ \|P_My(T_2; \zeta, f)\|\leq \varepsilon\|\zeta\|\},
\end{equation}
where $y(\cdot; \zeta, f)$ is the solution to the equation:
\begin{equation}\label{e302}
 \left\{\begin{array}{ll}
 y'(t)- A y(t)=\mathds{1}_\omega f,
 \   \ \ t\in (T_1 ,T_2),\\
  y(T_1)=\zeta.
\end{array}\right.
\end{equation}
We simply write  $y(\cdot; f)$ for the solution to (\ref{e302}), when there is no risk causing any confusion.
The studies of $(SNP)$ are related to the following optimization problem:
\begin{equation}\label{gswang3.3}
(P)\;\;\;\;\;\;\;\;\;\;\;\ \  \inf_{\Phi\in X_M}G(\Phi),
\end{equation}
where the functional $G: X_M\rightarrow \mathds{R}$ is defined by
\begin{equation}\label{gswang3.4}
\ \ \ \ \ G(\Phi)=\frac{1}{2(T_2-T_1)}\Big\|\int_{T_1}^{T_2}\mathds{1}_\omega^*e^{A(T_2-t)}\Phi dt\Big\|_\omega^2+\langle \zeta, e^{A(T_2-T_1)}\Phi\rangle+\varepsilon \|\zeta\|\|\Phi\|,\;\;
\Phi\in X_M.
\end{equation}
\begin{lemma}\label{lem301}
The following conclusions are true:

(i) The functional $G$ is coercive and strictly convex. It has a unique minimizer $\Phi^\zeta$ over $X_M$;

(ii) Let $\Phi^\zeta$ be the minimizer of $G$. Then $\Phi^\zeta=0$ if and only if $\|P_My(T_2; 0)\|\leq \varepsilon\|\zeta\|$.
\end{lemma}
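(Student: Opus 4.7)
The plan is to treat the two assertions in order and exploit the finite-dimensional structure of $X_M$ together with the basic injectivity fact established in the proof of Lemma \ref{lem201}.

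For part (i), I would first verify strict convexity. The middle term $\Phi\mapsto\langle\zeta,e^{A(T_2-T_1)}\Phi\rangle$ is linear, and $\Phi\mapsto\varepsilon\|\zeta\|\,\|\Phi\|$ is convex, so it suffices to show that the quadratic term $Q(\Phi):=\|\int_{T_1}^{T_2}\mathds{1}_\omega^* e^{A(T_2-t)}\Phi\,dt\|_\omega^2$ is strictly convex on $X_M$. Expanding in the eigenbasis $\Phi=\sum_{i=1}^M a_i\xi_i$ gives $\int_{T_1}^{T_2}\mathds{1}_\omega^* e^{A(T_2-t)}\Phi\,dt=\sum_{i=1}^M \alpha_i a_i\,\mathds{1}_\omega^*\xi_i$ with $\alpha_i=\int_{T_1}^{T_2}e^{-\lambda_i(T_2-t)}dt>0$. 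Linear independence of $\{\mathds{1}_\omega^*\xi_i\}_{i=1}^M$ in $L^2(\omega)$ (recalled in the proof of Lemma \ref{lem201}) shows that the linear map $\Phi\mapsto\int\mathds{1}_\omega^* e^{A(T_2-t)}\Phi\,dt$ is injective on $X_M$; hence $Q$ is a positive definite quadratic form and $G$ is strictly convex.

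For coercivity, since $X_M$ is finite-dimensional, the injectivity above gives a constant $c=c(T_1,T_2,M)>0$ with $Q(\Phi)\geq c\|\Phi\|^2$. On the other hand, $|\langle\zeta,e^{A(T_2-T_1)}\Phi\rangle|\leq e^{\gamma_0(T_2-T_1)}\|\zeta\|\,\|\Phi\|$, so
\begin{equation*}
G(\Phi)\geq \frac{c}{2(T_2-T_1)}\|\Phi\|^2-e^{\gamma_0(T_2-T_1)}\|\zeta\|\,\|\Phi\|+\varepsilon\|\zeta\|\,\|\Phi\|\longrightarrow+\infty
\end{equation*}
as $\|\Phi\|\to\infty$. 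Continuity, strict convexity, and coercivity on the finite-dimensional space $X_M$ then yield a unique minimizer $\Phi^\zeta$.

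For part (ii), note that $y(T_2;0)=e^{A(T_2-T_1)}\zeta$ and for every $\Phi\in X_M$
\begin{equation*}
\langle\zeta,e^{A(T_2-T_1)}\Phi\rangle=\langle e^{A(T_2-T_1)}\zeta,\Phi\rangle=\langle P_My(T_2;0),\Phi\rangle,
\end{equation*}
by self-adjointness of $A$ and $\Phi\in X_M$. For the implication ``$\|P_My(T_2;0)\|\leq\varepsilon\|\zeta\|\Rightarrow\Phi^\zeta=0$'', drop $Q(\Phi)\geq 0$ and use Cauchy--Schwarz to get $G(\Phi)\geq -\|P_My(T_2;0)\|\,\|\Phi\|+\varepsilon\|\zeta\|\,\|\Phi\|\geq 0=G(0)$, so $0$ is a minimizer and uniqueness forces $\Phi^\zeta=0$. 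For the converse, test the minimality inequality $G(t\Phi)\geq G(0)=0$ with $t>0$, divide by $t$, and let $t\to 0^+$ to obtain $\langle P_My(T_2;0),\Phi\rangle+\varepsilon\|\zeta\|\,\|\Phi\|\geq 0$ for every $\Phi\in X_M$; replacing $\Phi$ by $-\Phi$ gives $|\langle P_My(T_2;0),\Phi\rangle|\leq\varepsilon\|\zeta\|\,\|\Phi\|$, and choosing $\Phi=P_My(T_2;0)\in X_M$ yields the desired bound.

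The only nontrivial step is justifying coercivity/strict convexity, which rests entirely on the linear independence of $\{\mathds{1}_\omega^*\xi_i\}_{i=1}^M$; once this is in hand, everything else is standard convex analysis and a first-order test at the origin.
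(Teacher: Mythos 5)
Your proof is correct, and it reaches the crucial point---coercivity and strict convexity of $G$---by a different mechanism than the paper. The paper derives both from the observability-type inequality (\ref{e212}) of Lemma \ref{lem202}, following the standard variational scheme for approximate controllability (it refers to the proof of Theorem 4.3 in \cite{03} for this, to \cite{39} for existence and uniqueness of the minimizer, and to Lemma 3.6(c) of \cite{01} for part (ii)). You instead exploit the finite dimensionality of $X_M$: writing $\Phi=\sum_{i=1}^M a_i\xi_i$, the quadratic term becomes $\bigl\|\sum_{i=1}^M\alpha_i a_i\mathds{1}_\omega^*\xi_i\bigr\|_\omega^2$ with $\alpha_i>0$, and the linear independence of $\{\mathds{1}_\omega^*\xi_i\}_{i=1}^M$ (the same fact invoked in Lemmas \ref{lem201} and \ref{lm302}) makes this a positive definite quadratic form on $X_M$, hence bounded below by $c\|\Phi\|^2$; this yields strict convexity and even quadratic coercivity without using (\ref{e212}) at all. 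Both routes are sound: yours is more elementary and self-contained (and gives a stronger coercivity statement), while the paper's route through (\ref{e212}) is the one that carries the quantitative information exploited later (Theorem \ref{th302} and Section 4) and would survive in settings without the spectral, finite-dimensional structure. Your part (ii) is a complete write-out of the argument the paper only cites: the identity $\langle\zeta,e^{A(T_2-T_1)}\Phi\rangle=\langle P_My(T_2;0),\Phi\rangle$ (self-adjointness of $A$ plus $\Phi\in X_M$), the bound $G(\Phi)\geq(\varepsilon\|\zeta\|-\|P_My(T_2;0)\|)\|\Phi\|\geq 0=G(0)$ for one direction, and the first-order test $G(t\Phi)\geq G(0)$ as $t\to 0^+$ with the choice $\Phi=P_My(T_2;0)$ for the converse, are all correct.
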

\begin{proof}
(i) By (\ref{e212}), we can use a standard  method (see, for instance, the proof of Theorem 4.3 in \cite{03}) to prove  that $G$ is coercive and strictly convex.
This implies that $G$  has an unique minimizer (see Theorem 2D in \cite{39}).

(ii) This conclusion  can be proved by a very similar way to that in the proof of Lemma 3.6 (c) in \cite{01}. We omit the details here. Hence, we end the proof of Lemma
\ref{lem301}.
\end{proof}

\begin{theorem}\label{th301}
The following conclusions are true:\\
(i) The problem $(SNP)$ has a unique minimal norm control $f^\zeta$;\\
(ii) The minimal norm control $f^\zeta$ to  $(SNP)$ is zero if and only if
$\|P_My(T_2; 0)\|\leq \varepsilon\|\zeta\|$;\\
(iii) The  minimal norm control to the problem $(SNP)$ can be expressed by
\begin{equation}\label{e303}
f^\zeta=\mathds{1}_\omega^*\frac{1}{T_2-T_1}\int_{T_1}^{T_2}e^{A(T_2-t)}\Phi^\zeta dt,
\end{equation}
where $\Phi^\zeta$ is the unique minimizer of the functional  $G$ given by  (\ref{gswang3.4}).
\end{theorem}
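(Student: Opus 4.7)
My plan is to treat all three parts by Hilbert-space convex duality, using the auxiliary functional $G$ and its unique minimizer $\Phi^\zeta$ already supplied by Lemma \ref{lem301}. Throughout, write $\mathcal{A}:=\{f\in L^2(\omega):\|P_My(T_2;\zeta,f)\|\leq\varepsilon\|\zeta\|\}$ for the admissible set.

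For (i), I would verify that $\mathcal{A}$ is nonempty (by the partial null approximate controllability of Theorem \ref{th201}), convex (since $f\mapsto P_My(T_2;\zeta,f)$ is affine and the target ball is convex) and strongly closed in $L^2(\omega)$; together with the convexity of the norm this makes $\mathcal{A}$ weakly closed. Since $f\mapsto\|f\|_\omega$ is weakly lower semicontinuous and coercive, a minimizing sequence admits a weak subsequential limit in $\mathcal{A}$ achieving $\mathcal{N}$. Uniqueness follows from the parallelogram identity: if $f_1\neq f_2$ were minimizers of common norm $\mathcal{N}$, then $(f_1+f_2)/2\in\mathcal{A}$ and
\[
\Big\|\tfrac{f_1+f_2}{2}\Big\|_\omega^2=\tfrac{1}{2}\|f_1\|_\omega^2+\tfrac{1}{2}\|f_2\|_\omega^2-\Big\|\tfrac{f_1-f_2}{2}\Big\|_\omega^2<\mathcal{N}^2,
\]
contradicting minimality. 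For (ii), if $\|P_My(T_2;0)\|\leq\varepsilon\|\zeta\|$ then the control $f\equiv 0$ lies in $\mathcal{A}$, so $\mathcal{N}=0$ and (i) gives $f^\zeta=0$; the converse is immediate since $f^\zeta=0$ forces the free evolution to obey the constraint.

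The substance lies in (iii). I would split according to whether $\Phi^\zeta$ vanishes. If $\Phi^\zeta=0$, then Lemma \ref{lem301}(ii) together with part (ii) above makes both sides of (\ref{e303}) vanish. In the generic case $\Phi^\zeta\neq 0$ the functional $G$ is Fr\'echet differentiable at $\Phi^\zeta$, so the first-order condition reads, for every $\Phi\in X_M$,
\[
\frac{1}{T_2-T_1}\Big\langle\int_{T_1}^{T_2}\mathds{1}_\omega^*e^{A(T_2-t)}\Phi^\zeta\,dt,\int_{T_1}^{T_2}\mathds{1}_\omega^*e^{A(T_2-t)}\Phi\,dt\Big\rangle_\omega+\langle\zeta,e^{A(T_2-T_1)}\Phi\rangle+\varepsilon\|\zeta\|\frac{\langle\Phi^\zeta,\Phi\rangle}{\|\Phi^\zeta\|}=0.
\]
Denoting by $\widetilde f$ the candidate defined by the right-hand side of (\ref{e303}), the standard duality pairing of $y(\cdot;\zeta,\widetilde f)$ with the adjoint trajectory $\varphi(\cdot;T_2,\Phi)$ converts the first two summands above into $\langle P_My(T_2;\zeta,\widetilde f),\Phi\rangle$. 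Hence $P_My(T_2;\zeta,\widetilde f)=-\varepsilon\|\zeta\|\Phi^\zeta/\|\Phi^\zeta\|$, which shows $\widetilde f\in\mathcal{A}$ with saturated constraint.

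It remains to verify that $\widetilde f$ minimizes the $L^2(\omega)$-norm over $\mathcal{A}$. Applying the same duality pairing to an arbitrary $g\in\mathcal{A}$ and then invoking the first-order condition with the test function $\Phi=\Phi^\zeta$, I would obtain
\[
(T_2-T_1)\langle \widetilde f,g-\widetilde f\rangle_\omega=\langle P_My(T_2;\zeta,g),\Phi^\zeta\rangle+\varepsilon\|\zeta\|\|\Phi^\zeta\|,
\]
which is nonnegative by Cauchy--Schwarz combined with the constraint $\|P_My(T_2;\zeta,g)\|\leq\varepsilon\|\zeta\|$. Expanding $\|g\|_\omega^2=\|\widetilde f\|_\omega^2+2\langle\widetilde f,g-\widetilde f\rangle_\omega+\|g-\widetilde f\|_\omega^2$ then yields $\|g\|_\omega\geq\|\widetilde f\|_\omega$, and the uniqueness in (i) identifies $\widetilde f$ with $f^\zeta$. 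The only delicate point I anticipate is the non-smoothness of the term $\varepsilon\|\zeta\|\|\Phi\|$ at the origin, which is precisely why the case split $\Phi^\zeta=0$ versus $\Phi^\zeta\neq 0$ is forced; once $\Phi^\zeta\neq 0$ is secured, the rest is the classical norm-optimal duality argument already used in Remark \ref{rem201}.
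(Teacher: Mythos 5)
Your proposal is correct and follows essentially the same route as the paper: weak closedness of the admissible set plus the parallelogram law for (i), admissibility of the zero control for (ii), and for (iii) the Euler--Lagrange equation of $G$ at $\Phi^\zeta$ combined with the duality pairing to show $P_My(T_2;\zeta,\widetilde f)=-\varepsilon\|\zeta\|\Phi^\zeta/\|\Phi^\zeta\|$ and then optimality, with the same case split (your condition $\Phi^\zeta=0$ is equivalent to the paper's $\|P_My(T_2;0)\|\leq\varepsilon\|\zeta\|$ by Lemma \ref{lem301}(ii)). The only cosmetic difference is that you conclude minimality from the variational inequality $\langle\widetilde f,g-\widetilde f\rangle_\omega\geq 0$ and the expansion of $\|g\|_\omega^2$, whereas the paper reaches the same conclusion via Young's inequality in (\ref{e308}); these are interchangeable.
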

\begin{proof}
(i) Write
\begin{equation}\label{ggwang3.10}
\mathcal{F}_{ad}=\{f\in L^2(\omega)\;:\;\|P_My(T_2; f)\|\leq \varepsilon \|\zeta\|\}.
\end{equation}
 By Theorem \ref{th201}, we see that $\mathcal{F}_{ad}\neq \emptyset$. Moreover, one can easily check that $\mathcal{F}_{ad}$ is weakly closed in $L^2(\omega)$. Then by a standard argument, we can show that problem $(SNP)$ has a minimal norm control. Moreover, by Parallelogram Law, one can prove that it is unique.

(ii) By (i) of this theorem, $(SNP)$ has an unique minimal norm  control. Then by  (\ref{e301}), we can easily verify the conclusion (ii) of this theorem.

(iii) We will show that  $f^\zeta$,  given by (\ref{e303}),  is the minimal norm control to $(SNP)$.

In  the case when  $\|P_My(T_2; 0)\|\leq \varepsilon \|\zeta\|$, it follows by (ii) of Lemma \ref{lem301} that $\Phi^\zeta=0$. Moreover, by  (ii) and (i)  of this theorem, we find that
the unique minimal norm control of $(SNP)$ is zero.
Clearly,  $f^\zeta$, given by   (\ref{e303}), is zero  in this case.
 Hence, it is the minimal norm control to $(SNP)$.

In the case that
$\|P_My(T_2; 0)\|> \varepsilon \|\zeta\|$,
 it suffices to show two facts: First,  $f^\zeta\in \mathcal{F}_{ad}$.
Second,  $\|f^\zeta\|_\omega\leq \|f\|_\omega$ for all $f\in \mathcal{F}_{ad}$.

To prove the first fact, we use  (ii) of Lemma \ref{lem301} to see that
 $\Phi^\zeta\neq 0$. Thus the Euler-Lagrange equation of $G$ associated with
 $\Phi^\zeta$ is as:
\begin{eqnarray}\label{e304}
&&\Big\langle\mathds{1}_\omega^*\frac{1}{\sqrt{T_2-T_1}}
\int_{T_1}^{T_2}e^{A(T_2-t)}\Phi^\zeta dt, \; \mathds{1}_\omega^*\frac{1}{\sqrt{T_2-T_1}}\int_{T_1}^{T_2}e^{A(T_2-t)}\Phi dt\Big\rangle_\omega\nonumber\\
&&+\langle \zeta, e^{A(T_2-T_1)}\Phi\rangle+\varepsilon \|\zeta\|\langle\frac{\Phi^\zeta}{\|\Phi^\zeta\|}, \;\Phi\rangle=0\;\;\mbox{for all}\;\; \Phi\in X_M.
\end{eqnarray}
From  (\ref{e303}) and (\ref{e304}), we can easily verify  that
\begin{eqnarray*}
P_My(T_2; f^\zeta)=P_M\Big(e^{A(T_2-T_1)}\zeta
+\int_{T_1}^{T_2}e^{A(T_2-t)}\mathds{1}_\omega f^\zeta dt\Big)=-\varepsilon \|\zeta\|\frac{\Phi^\zeta}{\|\Phi^\zeta\|}.
\end{eqnarray*}
This implies that
$\|P_My(T_2;f^\zeta)\|\leq\varepsilon \|\zeta\|$.
Hence, we have that $f^\zeta\in\mathcal{F}_{ad}$.

 To show the second fact,  we arbitrarily fix $f\in \mathcal{F}_{ad}$. Then it follows by (\ref{ggwang3.10}) that
$\|P_My(T_2; f)\|\leq \varepsilon \|\zeta\|$.
This, along with  (\ref{e303}) and  (\ref{e304}) (where $\Phi=\Phi^\zeta$), yields
 that
\begin{eqnarray}\label{e308}
(T_2-T_1)\|f^\zeta\|_\omega^2
&=&-\varepsilon \|\zeta\|\|\Phi^\zeta\|-\langle e^{A(T_2-T_1)}\zeta, \; \Phi^\zeta\rangle\nonumber\\
&\leq& \langle y(T_2; f), \;\Phi^\zeta\rangle-\langle e^{A(T_2-T_1)}\zeta, \Phi^\zeta\rangle\nonumber\\
&=&\Big\langle \sqrt{T_2-T_1} f, \;\sqrt{T_2-T_1}\mathds{1}_\omega^*\frac{1}{T_2-T_1}
\int_{T_1}^{T_2}e^{A(T_2-t)}\Phi^\zeta dt\Big\rangle_\omega\nonumber\\
&\leq& \frac{1}{2}(T_2-T_1)\|f\|_\omega^2+\frac{1}{2}(T_2-T_1)\|f^\zeta\|_\omega^2,
\end{eqnarray}
which leads to the second fact as desired.

Finally, from the above two facts, we see that $f^\zeta$ is the minimal norm control to $(SNP)$ in the second case.
Hence, we end the proof of Theorem \ref{th301}.
\end{proof}

\begin{remark}
The control $\tilde{u}$ given by  Theorem \ref{th201} is an admissible control to Problem $(SNP)$, i.e., $\tilde{u}\in \mathcal{F}_{ad}$. Then  it follows by (\ref{e208}) that
$\|f^\zeta\|_{L^2(T_1, T_2; L^2(\omega))}\leq C(\varepsilon, M, T_2-T_1)\|\zeta\|$,
where $f^\zeta$ is the minimal norm control to $(SNP)$ and
$C(\varepsilon, M, T_2-T_1)$ is the constant given by (\ref{e209}).
 Thus, we obtain an upper bound for the minimal norm control to $(SNP)$. However,
 this bound is not good enough to build up the desired feedback law. So we aim to
 seek a better bound for  $f^\zeta$  in the next subsection.
 \end{remark}
\subsection{Estimates for  minimal norm controls }
In this subsection, we shall give some  estimates for  the minimal norm control $f^\zeta$ to the problem $(SNP)$.  These estimates will be used
to build up the feedback law and to get lower and upper bounds for the norm of the feedback law.
Before giving the main theorem of this subsection, we show  the following lemma which gives some properties on the matrix defined by (\ref{wangggs2.19}).
Similar results for the heat equation without any potential was given in \cite{02}.
\begin{lemma}\label{lm302}
For any $M\in \mathds{N}^+$, the matrix $B_M$, given by (\ref{wangggs2.19}) with $k=M$, is positive definite. Furthermore, if $M\in \mathbb{N}^+$ satisfies that  $\lambda_M\geq 0$,
then
\begin{equation}\label{e312}
|\alpha|_M^2\leq\alpha' B_M^{-1}\alpha\leq e^{C_0(1+\gamma_0^{\frac{2}{3}}+\sqrt{\lambda_{M}})}|\alpha|_M^2\;\;\mbox{for any}\;\;
\alpha\in \mathbb{R}^M.
\end{equation}
Here $\gamma_0$ is given by (\ref{wang1.2}) and $C_0=C(\Omega,\omega)$.
 \end{lemma}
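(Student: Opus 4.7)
By construction, $B_M$ is the Gram matrix of the family $\{\mathds{1}^*_\omega\xi_i\}_{i=1}^{M}\subset L^2(\omega)$, hence symmetric and positive semi-definite. The restrictions $\mathds{1}^*_\omega\xi_1,\dots,\mathds{1}^*_\omega\xi_M$ are linearly independent in $L^2(\omega)$ (as already recalled just after (\ref{wangggs2.19})), so $B_M$ is in fact positive definite; this settles the first assertion of the lemma. For the left-hand inequality of (\ref{e312}), a direct computation yields, for every $\alpha=(\alpha_1,\dots,\alpha_M)'\in\mathbb{R}^M$,
\[
\alpha'B_M\alpha \;=\; \Big\|\sum_{i=1}^M\alpha_i\mathds{1}^*_\omega\xi_i\Big\|_\omega^2 \;=\; \int_\omega\Big|\sum_{i=1}^M\alpha_i\xi_i(x)\Big|^2\,dx \;\le\; \Big\|\sum_{i=1}^M\alpha_i\xi_i\Big\|^2 \;=\; |\alpha|_M^2,
\]
using the orthonormality of $\{\xi_i\}$ in $L^2(\Omega)$. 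Since $B_M$ is symmetric positive definite, this bound $\alpha'B_M\alpha\le |\alpha|_M^2$ is equivalent, via simultaneous diagonalization of $B_M$ and $B_M^{-1}$, to $\alpha'B_M^{-1}\alpha\ge |\alpha|_M^2$ for every $\alpha\in\mathbb{R}^M$, which is the left-hand side of (\ref{e312}).

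For the right-hand inequality of (\ref{e312}), the key input is the Lebeau--Robbiano spectral inequality tailored to $-A=-\Delta+V$: there exists $C_0=C(\Omega,\omega)>0$ such that, for every $\lambda\ge 0$ and every $y\in\mathrm{span}\{\xi_i:\lambda_i\le\lambda\}$,
\[
\|y\|^2 \;\le\; e^{C_0(1+\gamma_0^{2/3}+\sqrt{\lambda})}\,\|y\|_{L^2(\omega)}^2.
\]
This is the standard spectral inequality for a Schr\"odinger operator with $L^\infty$ potential; the $\gamma_0^{2/3}$ contribution originates from a global Carleman estimate of Jerison--Lebeau / Fefferman--Phong type, and it is the very dependence already used in (iii) of Theorem~2.1 in \cite{01} and in \cite{20}. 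Under the hypothesis $\lambda_M\ge 0$, I apply this inequality with $\lambda=\lambda_M$ and $y=\sum_{i=1}^M\alpha_i\xi_i$, so that $\|y\|^2=|\alpha|_M^2$ and $\|y\|_{L^2(\omega)}^2=\alpha'B_M\alpha$; this gives
\[
|\alpha|_M^2 \;\le\; e^{C_0(1+\gamma_0^{2/3}+\sqrt{\lambda_M})}\,\alpha'B_M\alpha,
\]
which by the same simultaneous-diagonalization argument as above is equivalent to $\alpha'B_M^{-1}\alpha\le e^{C_0(1+\gamma_0^{2/3}+\sqrt{\lambda_M})}|\alpha|_M^2$, i.e.\ the right-hand side of (\ref{e312}).

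The only non-routine ingredient in this plan is the spectral inequality for $-\Delta+V$ with explicit $\gamma_0^{2/3}$ dependence on the potential; modulo this input, the proof is exactly the Gram-matrix / spectral-inequality dictionary used in \cite{02} for the pure Laplacian. I therefore expect the main (and essentially only) technical obstacle to be the bookkeeping of the $\gamma_0^{2/3}$ factor inside the underlying Carleman estimate, rather than any genuinely new conceptual difficulty.
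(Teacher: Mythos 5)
Your proposal is correct and follows essentially the same route as the paper: positive definiteness from the linear independence of the restricted eigenfunctions, the left inequality from $\alpha'B_M\alpha=\|\sum_i\alpha_i\mathds{1}^*_\omega\xi_i\|_\omega^2\le|\alpha|_M^2$ translated into an eigenvalue bound for $B_M^{-1}$, and the right inequality from the spectral (Lebeau--Robbiano type) inequality for finite sums of eigenfunctions of $-\Delta+V$ with the $e^{C_0(1+\gamma_0^{2/3}+\sqrt{\lambda_M})}$ constant, which the paper likewise imports from (ii) of Theorem 2.1 in \cite{01} rather than reproving. The only cosmetic difference is that you phrase the passage from $B_M$ to $B_M^{-1}$ via simultaneous diagonalization while the paper argues through $\max\sigma(B_M)\le1$ and $\min\sigma(B_M)\ge e^{-C_0(\cdot)}$, which is the same argument.
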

 \begin{proof}
 Let $M\in \mathbb{N}^+$. Since $\mathds{1}^*_\omega\xi_1, \dots,\mathds{1}^*_\omega\xi_M$ are linear independent in $L^2(\omega)$
 (see, for instance, Page 38 in \cite{22}),
we can easily verify that  the symmetric matrix $B_M$ is  positive definite.

Now we arbitrarily fix $M\in \mathbb{N}^+$, with $\lambda_M\geq 0$, and $\alpha=(a_1,a_2,\cdots a_M)'\in \mathbb{R}^M$.
To  show the first inequality in (\ref{e312}), we write  $\sigma(B_M)$ for  the set of all eigenvalues of $B_M$. Let
$\underline{\lambda}:=\min\sigma(B_M)$ and $\bar{\lambda}:=\max\sigma(B_M)$.
Then we have that
\begin{eqnarray*}
\alpha' B_M\alpha
=\Big\|\sum_{i=1}^Ma_i\mathds{1}_\omega^*\xi_i\Big\|_\omega^2
=\Big\|\mathds{1}_\omega^*\sum_{i=1}^Ma_i\xi_i\Big\|_\omega^2\leq \Big\|\sum_{i=1}^Ma_i\xi_i\Big\|^2=|\alpha|_M^2.
\end{eqnarray*}
This implies that $\bar{\lambda}\leq 1$. So
$\min\sigma(B_M^{-1})\geq 1$, which leads to the first inequality in (\ref{e312}).

We now show the second inequality in (\ref{e312}). By the same way as that used in the proof of (ii) of Theorem 2.1 in \cite{01}, we can find a constant $C_0=C(\Omega,\omega)$ so that
$$
|\alpha|_M^2\leq e^{C_0\big(1+\gamma_0^{\frac{2}{3}}+\sqrt{\lambda_{M}}\big)}
\int_\omega|\sum_{i=1}^Ma_i\xi_i|^2dx
=e^{C_0\big(1+\gamma_0^{\frac{2}{3}}+\sqrt{\lambda_{M}}\big)}\alpha' B_M\alpha.
 $$
 This implies that $\underline{\lambda}\geq e^{-C_0(1+\gamma_0^{\frac{2}{3}}+\sqrt{\lambda_{M}})}$. Thus,
$\max\sigma(B_M^{-1})\leq e^{C_0\big(1+\gamma_0^{\frac{2}{3}}+\sqrt{\lambda_{M}}\big)}$,  which
leads to the second inequality in (\ref{e312}).
This
 ends the proof of Lemma \ref{lm302}.
 \end{proof}

The main theorem of this subsection is as:
\begin{theorem}\label{th302}
Let $\zeta\in L^2(\Omega)$ and $M\in \mathbb{N}^+$ satisfy that  $\|P_Me^{A(T_2-T_1)}\zeta\|>\varepsilon \|\zeta\|$ and $\lambda_{M}\geq0$. Then the minimal norm control $f^\zeta$ to  the problem $(SNP)$ satisfies the estimates:
\begin{equation}\label{e313}
 \begin{array}{ll}\|f^\zeta\|_{\omega}\geq
 \frac{1}{\alpha_1}(\|P_Me^{A(T_2-T_1)}\zeta\|-\varepsilon\|\zeta\|);\\
  \|f^\zeta\|_{\omega}\leq e^{\tilde{C}_1\big(1+\gamma_0^{\frac{2}{3}}+\sqrt{\lambda_{M}}\big)}
  (\frac{\gamma_0}{1-e^{-\gamma_0 (T_2-T_1)}}+\frac{\varepsilon}{\alpha_M})\|\zeta\|,
\end{array}
\end{equation}
where $\tilde{C}_1=C_0/2$, with $C_0$ given by Lemma \ref{lm302}, and
\begin{equation}\label{ggsswang3.17}
\alpha_j=\int_{T_1}^{T_2}e^{-\lambda_j(T_2-t)}dt,\; j=1, 2, \cdots, M.
 \end{equation}
\end{theorem}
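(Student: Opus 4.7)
Both bounds will rest on the dual characterization from Theorem \ref{th301}(iii), $f^\zeta = \mathds{1}_\omega^*\frac{1}{T_2-T_1}\int_{T_1}^{T_2}e^{A(T_2-t)}\Phi^\zeta dt$, together with the Euler--Lagrange identity $P_My(T_2;f^\zeta)=-\varepsilon\|\zeta\|\Phi^\zeta/\|\Phi^\zeta\|$ already extracted in the proof of Theorem \ref{th301}. The assumption $\|P_Me^{A(T_2-T_1)}\zeta\|>\varepsilon\|\zeta\|$ secures $\Phi^\zeta\neq 0$ via Lemma \ref{lem301}(ii), so that identity is meaningful.

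\textbf{Lower bound.} From the identity above one reads off $\|P_My(T_2;f^\zeta)\|=\varepsilon\|\zeta\|$. Decomposing
\begin{equation*}
P_My(T_2;f^\zeta)=P_Me^{A(T_2-T_1)}\zeta+P_M\!\int_{T_1}^{T_2}\!e^{A(T_2-t)}\mathds{1}_\omega f^\zeta\,dt
\end{equation*}
and using the triangle inequality, one is reduced to bounding the integral term. Since on $X_M$ one has $\|P_Me^{As}\|_{\mathcal L(L^2(\Omega))}=e^{-\lambda_1 s}$, that term is dominated by $\int_{T_1}^{T_2}e^{-\lambda_1(T_2-t)}dt\cdot\|f^\zeta\|_\omega = \alpha_1\|f^\zeta\|_\omega$, yielding the claimed lower bound after rearrangement.

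\textbf{Upper bound.} The idea is to construct by hand an admissible $\hat f\in\mathcal F_{ad}$ with $P_My(T_2;\hat f)=0$, living in $\mathrm{span}\{\mathds{1}_\omega^*\xi_j\}_{j=1}^M$, and to exploit the minimality $\|f^\zeta\|_\omega\leq\|\hat f\|_\omega$. Writing $\hat f=\sum_{i=1}^M d_i\mathds{1}_\omega^*\xi_i$, the $j$-th mode of $P_My(T_2;\hat f)$ equals $e^{-\lambda_j(T_2-T_1)}\langle\zeta,\xi_j\rangle+\alpha_j(B_Md)_j$, so the cancellation $P_My(T_2;\hat f)=0$ amounts to the linear system $B_Md=q$ with $q_j=-e^{-\lambda_j(T_2-T_1)}\langle\zeta,\xi_j\rangle/\alpha_j$. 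Positive definiteness of $B_M$ (Lemma \ref{lm302}) gives $d=B_M^{-1}q$ and
\begin{equation*}
\|\hat f\|_\omega^2=d'B_Md=q'B_M^{-1}q\leq e^{C_0(1+\gamma_0^{2/3}+\sqrt{\lambda_M})}|q|_M^2.
\end{equation*}
To finish, the scalar bound $e^{-\lambda_j(T_2-T_1)}/\alpha_j\leq \gamma_0/(1-e^{-\gamma_0(T_2-T_1)})$ for all $1\leq j\leq M$ gives $|q|_M^2\leq (\gamma_0/(1-e^{-\gamma_0(T_2-T_1)}))^2\|\zeta\|^2$, from which $\|f^\zeta\|_\omega\leq\|\hat f\|_\omega$ yields the desired inequality (the nonnegative term $\varepsilon/\alpha_M$ on the right is absorbed for free).

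\textbf{Main obstacle.} The delicate step is establishing the uniform scalar estimate $e^{-\lambda_j(T_2-T_1)}/\alpha_j\leq \gamma_0/(1-e^{-\gamma_0(T_2-T_1)})$. The eigenvalues can straddle zero: for $\lambda_j<0$ one rewrites the ratio as $|\lambda_j|/(1-e^{-|\lambda_j|(T_2-T_1)})$ and invokes the monotonicity of $x\mapsto x/(1-e^{-xT})$, together with the min--max bound $\lambda_1\geq -\gamma_0$ coming from $-A=-\Delta+V$ and $\gamma_0=\|V\|_\infty$; for $\lambda_j\geq 0$ the ratio equals $\lambda_j/(e^{\lambda_j(T_2-T_1)}-1)\leq 1/(T_2-T_1)$, which is in turn dominated by $\gamma_0/(1-e^{-\gamma_0(T_2-T_1)})$ using $1-e^{-x}\leq x$. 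Unifying the two regimes into a single bound is the crux of the argument.
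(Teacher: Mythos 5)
Your proposal is correct, but it reaches (\ref{e313}) by a genuinely different route than the paper. The paper works entirely through the dual representation: writing $\Phi^\zeta=\sum_{i=1}^M b_i\xi_i$, it converts the Euler--Lagrange equation (\ref{e304}) into the matrix identity $DB_MD(b_1,\dots,b_M)'=-(T_2-T_1)\tilde\beta$ with $D=\mathrm{diag}(\alpha_i)$, deduces from (\ref{e303}) the exact formula $\|f^\zeta\|_\omega^2=\tilde\beta' D^{-1}B_M^{-1}D^{-1}\tilde\beta$, and then gets \emph{both} inequalities from the two-sided estimate on $B_M^{-1}$ in Lemma \ref{lm302}: the lower bound from $|\alpha|_M^2\leq\alpha' B_M^{-1}\alpha$, the upper bound from the exponential bound together with the scalar inequality $e^{-\lambda_1(T_2-T_1)}/\alpha_1\leq\gamma_0/(1-e^{-\gamma_0(T_2-T_1)})$ (it only needs this at $j=1$, since $e^{-\lambda_j(T_2-T_1)}/\alpha_j$ is maximal there). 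You instead prove the lower bound primitively --- admissibility of $f^\zeta$, the triangle inequality, and $\|P_Me^{A(T_2-t)}\mathds{1}_\omega f^\zeta\|\leq e^{-\lambda_1(T_2-t)}\|f^\zeta\|_\omega$ --- using neither the Euler--Lagrange system nor Lemma \ref{lm302}; and you prove the upper bound by exhibiting the explicit competitor $\hat f=\sum_{i=1}^M d_i\mathds{1}_\omega^*\xi_i$ solving $B_Md=q$, $q_j=-e^{-\lambda_j(T_2-T_1)}\langle\zeta,\xi_j\rangle/\alpha_j$, which makes $P_My(T_2;\hat f)=0$, then invoking minimality and only the upper half of Lemma \ref{lm302}; your case analysis of the scalar estimate (monotonicity of $x\mapsto x/(1-e^{-x(T_2-T_1)})$ with $-\lambda_j\leq-\lambda_1\leq\gamma_0$ when $\lambda_j<0$; $\lambda_j/(e^{\lambda_j(T_2-T_1)}-1)\leq 1/(T_2-T_1)\leq \gamma_0/(1-e^{-\gamma_0(T_2-T_1)})$ when $\lambda_j\geq0$) is exactly what is needed and is correct. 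What your route buys: it is more elementary, the upper bound comes out slightly sharper (the $\varepsilon/\alpha_M$ term is absorbed, as you note), and it does not even need the hypothesis $\|P_Me^{A(T_2-T_1)}\zeta\|>\varepsilon\|\zeta\|$ for the upper bound, whereas the paper needs it to guarantee $\Phi^\zeta\neq0$ so that (\ref{e304}) holds. What the paper's route buys: one exact quadratic-form identity delivers both bounds simultaneously, and it stays aligned with the representation (\ref{e314})--(\ref{e316}) of the optimal control that the authors record for later use (cf.\ (\ref{wang2018520})).
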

\begin{proof}
Let $\Phi^\zeta$ be the minimizer of $G$ (given by (\ref{gswang3.4})).
Since $\|P_Me^{A(T_2-T_1)}\zeta\|>\varepsilon \|\zeta\|$, it follows from Lemma \ref{lem301} that $\Phi^\zeta\neq 0$.
Write
$\zeta:=\sum_{i=1}^\infty a_i\xi_i$ and $\Phi^\zeta:=\sum_{i=1}^M b_i\xi_i$.
 Let
$$
\tilde{\beta}:=\left(
                        \begin{array}{c}
                          e^{-\lambda_1(T_2-T_1)}a_1 \\
                          \vdots\\

                          e^{-\lambda_M(T_2-T_1)}a_M\\
                        \end{array}
                      \right)+\frac{\varepsilon\|\zeta\|}{\|\Phi^\zeta\|}\left(
       \begin{array}{c}
         b_1 \\
         \vdots \\
         b_M \\
       \end{array}
     \right)\;\;\mbox{and}\;\;D:=\mathrm{diag}(\alpha_i)_{i=1}^M.
     $$
Then by  (\ref{e304}), we see that
\begin{equation}\label{e314}
DB_MD(b_1,\dots,b_M)'=-(T_2-T_1)\tilde{\beta}.
\end{equation}
Moreover, one can easily check that
\begin{eqnarray}\label{wang2018520}
 (\ref{e304})  \Leftrightarrow (\ref{e314}),
 \;\;\mbox{when}\;\;\Phi^\zeta\neq 0.
\end{eqnarray}
(We would like to mention: (\ref{wang2018520}) will not be used in this proof, but will be used in Section 4.)
From (\ref{e314}), it follows that
\begin{equation}\label{e315}
(b_1,\dots,b_M)'=-(T_2-T_1)D^{-1}B_M^{-1}D^{-1}\tilde{\beta}.
\end{equation}
By (iii) of Theorem \ref{th301} and (\ref{e315}), after some  simple calculations, we
find that
\begin{eqnarray}\label{e316}
\|f^\zeta\|^2_{\omega}
=\frac{1}{(T_2-T_1)^2}(b_1, \cdots, b_M)DB_MD(b_1,\dots,b_M)'
=\tilde{\beta}^TD^{-1}B_M^{-1}D^{-1}\tilde{\beta}.
\end{eqnarray}
From (\ref{e316}) and  Lemma \ref{lm302}, we see that
\begin{eqnarray}\label{e317}
\|f^\zeta\|^2_{\omega}\geq |D^{-1}\tilde{\beta}|^2_M\geq\frac{1}{\alpha_1^2}(\|P_Me^{A(T_2-T_1)}\zeta\|-\varepsilon\|\zeta\|)^2.
\end{eqnarray}
This gives the first inequality of (\ref{e313}).

To show the second inequality of (\ref{e313}), we first  notice that $-\lambda_1\leq\gamma_0$. Then by (\ref{e316}) and Lemma \ref{lm302}, we obtain that
\begin{eqnarray}\label{e318}
\|f^\zeta\|^2_{\omega}
&\leq&e^{C_0\big(1+\gamma_0^{\frac{2}{3}}
+\sqrt{\lambda_{M}}\big)}|D^{-1}\tilde{\beta}|_M^2\nonumber\\
&\leq&e^{C_0\big(1+\gamma_0^{\frac{2}{3}}
+\sqrt{\lambda_{M}}\big)}\Big(\frac{e^{-\lambda_1(T_2-T_1)}}{\alpha_1}
+\frac{\varepsilon}{\alpha_M}\Big)^2\|\zeta\|^2\nonumber\\
&\leq&e^{C_0\big(1+\gamma_0^{\frac{2}{3}}
+\sqrt{\lambda_{M}}\big)}\Big(\frac{\gamma_0}{1-e^{-\gamma_0 (T_2-T_1)}}+\frac{\varepsilon}{\alpha_M}\Big)^2\|\zeta\|^2,
\end{eqnarray}
which leads to  the second inequality of (\ref{e313}). This  completes the proof
of Theorem \ref{th302}.
\end{proof}

\subsection{Minimal norm impulse control problems}
In this subsection, we study some minimal norm impulse control problems. We will omit the detailed proofs, since they are very similar to those in two subsections above.
Let $\omega_1\subset \Omega$ be the open subset given  in subsection 1.2 and
let $\tau\in (T_1, T_2)$. Given $\zeta\in L^2(\Omega)$ and $h\in L^2(\omega_1)$, write $z(\cdot; \zeta, h)$ for the solution of
the impulse controlled equation:
\begin{equation}\label{e320}
 \left\{\begin{array}{ll}
 z'(t)- A z(t)=0,
 \   \ \ t\in (T_1 ,T_2)\backslash\{\tau\},\\
 z(T_1)=\zeta,\\
  z(\tau)=z(\tau-)+\mathds{1}_{\omega_1} h.\end{array}\right.
\end{equation}
\begin{definition}\label{def202}
 The equation (\ref{e320}) is said to have the partially null approximate controllability with a cost, if for any  $M\in \mathds{N}^+$ and $\varepsilon >0$,
 there is a constant $\tilde{C}(\varepsilon, M, T_2-T_1)$ so that  for any $\zeta\in L^2(\Omega)$,
 there is a control $h^{\zeta}\in L^2(\omega_1)$ satisfying that
$$
\frac{1}{[\tilde{C}(\varepsilon, M, T_2-T_1)]^2}\|h^{\zeta}\|^2_{\omega_1}
+\frac{1}{\varepsilon^2}\|P_Mz(T_2;\zeta,h^{\zeta})\|^2\leq \|\zeta\|^2.
$$
\end{definition}
Next,  we arbitrarily fix   $\zeta\in L^2(\Omega)$, $\varepsilon>0$ and $M\in \mathds{N}^+$.
Consider
the minimal norm impulse control problem:
\begin{equation}\label{e319}
(INP) \ \ \  \widetilde{\mathcal{N}}:=\inf\{\|h\|_{\omega_1}\; :\; h\in L^2(\omega_1)\ \mathrm{s.t.}\ \|P_Mz(T_2;\zeta, h)\|\leq \varepsilon\|\zeta\|\}.
\end{equation}
It was proven that the equation (\ref{e320}) has the null approximate controllability with a cost (see Theorem 3.1 in \cite{01}). Consequently, it has the  partially null approximate controllability with a cost.
By the same arguments to those used to prove properties of the problem $(SNP)$ in the last subsection, we can get the  results in the next theorem. (These results are quite  similar to those in Theorem 3.4 in \cite{01}.)
\begin{theorem}\label{th303}
The following conclusions are true:\\
(i) The problem $(INP)$ has a unique minimal norm control $h^\zeta$;\\
(ii) When $h^\zeta$ is the minimal norm control to $(INP)$,
$h^\zeta=0$ iff $\|P_M z(T_2;\zeta, 0)\|\leq \varepsilon\|\zeta\|$;\\
(iii) The minimal norm control to $(INP)$ can be expressed  by
\begin{equation}\label{e321}
h^\zeta=\mathds{1}_{\omega_1}^*e^{A(T_2-\tau)}\Psi^\zeta,
\end{equation}
where $\Psi^\zeta$ is the unique minimizer of
the functional:
$$
H(\Psi)=\frac{1}{2}\|\mathds{1}_{\omega_1}^*e^{A(T_2-\tau)}\Psi \|_{\omega_1}^2+\langle \zeta, e^{A(T_2-T_1)}\Psi\rangle+\varepsilon \|\zeta\|\|\Psi\|,\;\;\Psi\in X_M.
$$
\end{theorem}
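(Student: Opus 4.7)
The plan is to mirror the proof of Theorem \ref{th301} essentially verbatim, with the time-distributed control $f \in L^2(\omega)$ replaced by the impulse control $h \in L^2(\omega_1)$, and the control-to-state map $f \mapsto \int_{T_1}^{T_2} e^{A(T_2-t)} \mathds{1}_\omega f\, dt$ replaced by the simpler map $h \mapsto e^{A(T_2-\tau)} \mathds{1}_{\omega_1} h$. Since equation (\ref{e320}) has the partially null approximate controllability with a cost (as noted in the statement, following Theorem 3.1 of \cite{01}), the admissible set $\widetilde{\mathcal{F}}_{ad} := \{h \in L^2(\omega_1) : \|P_M z(T_2;\zeta,h)\| \leq \varepsilon\|\zeta\|\}$ is nonempty; it is also convex and closed (the map $h \mapsto P_M z(T_2;\zeta,h)$ is affine continuous), hence weakly closed. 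Conclusion (i) then follows from the standard weak compactness argument for existence together with the Parallelogram Law for uniqueness, exactly as in part (i) of Theorem \ref{th301}. Conclusion (ii) is then immediate: if $\|P_M z(T_2;\zeta,0)\| \leq \varepsilon\|\zeta\|$ then $0 \in \widetilde{\mathcal{F}}_{ad}$ is trivially the unique minimizer, and conversely if $h^\zeta = 0$ then admissibility forces the stated inequality.

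For conclusion (iii), the plan is first to establish an analog of Lemma \ref{lem301}: the functional $H$ is of class $C^1$, strictly convex, and coercive on $X_M$, hence has a unique minimizer $\Psi^\zeta$, with $\Psi^\zeta = 0$ iff $\|P_M z(T_2;\zeta,0)\| \leq \varepsilon\|\zeta\|$. Strict convexity and smoothness are routine. Coercivity is the key point and requires an impulse-control analog of the observability inequality (\ref{e212}), namely
\[
\|\varphi(T_1; T_2, \Psi)\|^2 \leq \varepsilon^2 \|\Psi\|^2 + [\widetilde{C}(\varepsilon, M, T_2-T_1)]^2 \bigl\|\mathds{1}_{\omega_1}^* e^{A(T_2-\tau)} \Psi\bigr\|_{\omega_1}^2, \quad \Psi \in X_M,
\]
which is equivalent to the partially null approximate controllability of (\ref{e320}) by the same duality as in Lemma \ref{lem203} (this inequality is provided by Theorem 3.1 of \cite{01}). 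Combining this with Cauchy--Schwarz on $\langle \zeta, e^{A(T_2-T_1)}\Psi\rangle$ forces $H(\Psi) \to +\infty$ as $\|\Psi\| \to \infty$.

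The final step is to show that the $h^\zeta$ defined by (\ref{e321}) is indeed the minimal norm control. In the trivial case $\|P_M z(T_2;\zeta,0)\| \leq \varepsilon\|\zeta\|$, both sides are zero. In the nontrivial case, $\Psi^\zeta \neq 0$ and the Euler--Lagrange equation of $H$ reads
\[
\langle \mathds{1}_{\omega_1}^* e^{A(T_2-\tau)} \Psi^\zeta, \mathds{1}_{\omega_1}^* e^{A(T_2-\tau)} \Psi \rangle_{\omega_1} + \langle \zeta, e^{A(T_2-T_1)} \Psi\rangle + \varepsilon \|\zeta\| \Bigl\langle \tfrac{\Psi^\zeta}{\|\Psi^\zeta\|}, \Psi\Bigr\rangle = 0
\]
for all $\Psi \in X_M$. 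Taking $\Psi = P_M z(T_2;\zeta,h^\zeta) + \varepsilon\|\zeta\|\Psi^\zeta/\|\Psi^\zeta\|$ and using the representation $z(T_2;\zeta,h^\zeta) = e^{A(T_2-T_1)}\zeta + e^{A(T_2-\tau)}\mathds{1}_{\omega_1} h^\zeta$, a direct computation gives $P_M z(T_2;\zeta,h^\zeta) = -\varepsilon \|\zeta\|\Psi^\zeta/\|\Psi^\zeta\|$, so $h^\zeta \in \widetilde{\mathcal{F}}_{ad}$. Minimality follows from the chain of inequalities in (\ref{e308}) with $\sqrt{T_2 - T_1}\, f$ replaced by $h$ and the inner product $\langle\cdot,\cdot\rangle_\omega$ replaced by $\langle\cdot,\cdot\rangle_{\omega_1}$, evaluated at the single instant $\tau$.

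The main obstacle is the coercivity of $H$, which hinges on the impulse observability inequality displayed above; everything else is a direct transcription of the argument in Theorem \ref{th301}. As this observability estimate is already available from \cite{01}, no essentially new analytic difficulty arises, and the entire theorem can be recorded as a parallel of the time-invariant case.
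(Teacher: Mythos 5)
Your proposal is correct and follows essentially the same route as the paper, which proves Theorem \ref{th303} by transcribing the arguments for $(SNP)$ and Theorem \ref{th301} to the impulse setting, with the needed controllability/observability for (\ref{e320}) imported from Theorem 3.1 of \cite{01}. The duality, Euler--Lagrange, and minimality steps you outline match the paper's (omitted) argument, so no further comment is needed.
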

By the very similar methods to those used in the proof of Theorem \ref{th302}, we can obtain what follows:
\begin{theorem}\label{th304}
 Let $\zeta\in L^2(\Omega)$ and $M\in \mathbb{N}^+$
satisfy that $\|P_Me^{A(T_2-T_1)}\zeta\|>\varepsilon \|\zeta\|$ and $\lambda_{M}\geq 0$. Then the minimal norm control $h^\zeta$ to $(INP)$  satisfies the estimates:
\begin{equation}\label{e322}
 \begin{array}{ll}\|h^\zeta\|_{\omega_1}\geq e^{\lambda_1(T_2-\tau)}(\|P_Me^{A(T_2-T_1)}\zeta\|-\varepsilon\|\zeta\|),\\
  \|h^\zeta\|_{\omega_1}\leq e^{\tilde{C}_1\big(1+\gamma_0^{\frac{2}{3}}+\sqrt{\lambda_{M}}\big)}(e^{-\lambda_1 (\tau-T_1)}+e^{\lambda_M (T_2-\tau)}\varepsilon)\|\zeta\|.
\end{array}
\end{equation}
\end{theorem}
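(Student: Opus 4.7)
The proof will mirror that of Theorem \ref{th302} line by line, with two substitutions: the Gram matrix $B_M$ (built from $\{\mathds{1}_\omega^*\xi_i\}_{i=1}^M$) is replaced by the analogous matrix $B_M^{\omega_1}$ built from $\{\mathds{1}_{\omega_1}^*\xi_i\}_{i=1}^M$, and the diagonal integral matrix $D=\mathrm{diag}(\alpha_j)$ is replaced by the evaluation matrix $\widetilde{D}=\mathrm{diag}(e^{-\lambda_j(T_2-\tau)})$. First I would invoke Theorem \ref{th303}(iii) to represent $h^\zeta=\mathds{1}_{\omega_1}^*e^{A(T_2-\tau)}\Psi^\zeta$ with $\Psi^\zeta$ the unique minimizer of $H$ over $X_M$. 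The hypothesis $\|P_Me^{A(T_2-T_1)}\zeta\|>\varepsilon\|\zeta\|$, combined with the $H$-analog of Lemma \ref{lem301}(ii) (whose proof is unchanged), yields $\Psi^\zeta\neq 0$, so $H$ is differentiable at $\Psi^\zeta$ and the Euler--Lagrange equation for $H$ takes the same form as (\ref{e304}), merely with $\mathds{1}_\omega^*\frac{1}{\sqrt{T_2-T_1}}\int_{T_1}^{T_2}e^{A(T_2-t)}\Phi\,dt$ replaced by $\mathds{1}_{\omega_1}^*e^{A(T_2-\tau)}\Phi$.

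Second, writing $\zeta=\sum_{i=1}^\infty a_i\xi_i$ and $\Psi^\zeta=\sum_{i=1}^M b_i\xi_i$ in the eigenbasis and testing the Euler--Lagrange equation against each $\xi_j$, I would reduce it to the matrix identity $\widetilde{D}B_M^{\omega_1}\widetilde{D}(b_1,\dots,b_M)'=-\widetilde{\beta}$, where $\widetilde{\beta}$ is the same vector as in the proof of Theorem \ref{th302}, namely $\widetilde{\beta}_j=e^{-\lambda_j(T_2-T_1)}a_j+\frac{\varepsilon\|\zeta\|}{\|\Psi^\zeta\|}b_j$. Then
\[
\|h^\zeta\|_{\omega_1}^2=(b_1,\dots,b_M)\widetilde{D}B_M^{\omega_1}\widetilde{D}(b_1,\dots,b_M)'=\widetilde{\beta}'\widetilde{D}^{-1}(B_M^{\omega_1})^{-1}\widetilde{D}^{-1}\widetilde{\beta},
\]
and the version of Lemma \ref{lm302} applied to $B_M^{\omega_1}$ (whose proof only uses the linear independence of $\{\mathds{1}_{\omega_1}^*\xi_i\}$ and the same spectral-type inequality from \cite{01}, both valid on any nonempty open set) sandwiches this between $|\widetilde{D}^{-1}\widetilde{\beta}|_M^2$ and $e^{C_0(1+\gamma_0^{2/3}+\sqrt{\lambda_M})}|\widetilde{D}^{-1}\widetilde{\beta}|_M^2$.

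Third, for the lower bound I would exploit that $T_2-\tau>0$ and $\lambda_j\geq\lambda_1$ force $e^{\lambda_j(T_2-\tau)}\geq e^{\lambda_1(T_2-\tau)}$, so $|\widetilde{D}^{-1}\widetilde{\beta}|_M\geq e^{\lambda_1(T_2-\tau)}|\widetilde{\beta}|_M$; and a standard triangle-inequality estimate, identical to the one used for $\widetilde{\beta}$ in Theorem \ref{th302}, gives $|\widetilde{\beta}|_M\geq \|P_Me^{A(T_2-T_1)}\zeta\|-\varepsilon\|\zeta\|$. This yields the first inequality of (\ref{e322}). For the upper bound I would decompose $\widetilde{\beta}=\widetilde{\beta}^{(1)}+\widetilde{\beta}^{(2)}$ with $\widetilde{\beta}^{(1)}_j=e^{-\lambda_j(T_2-T_1)}a_j$ and $\widetilde{\beta}^{(2)}_j=\frac{\varepsilon\|\zeta\|}{\|\Psi^\zeta\|}b_j$. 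Since $(\widetilde{D}^{-1}\widetilde{\beta}^{(1)})_j=e^{-\lambda_j(\tau-T_1)}a_j$ and $\tau-T_1>0$ with $\lambda_j\geq\lambda_1$, I obtain $|\widetilde{D}^{-1}\widetilde{\beta}^{(1)}|_M\leq e^{-\lambda_1(\tau-T_1)}\|\zeta\|$; similarly $T_2-\tau>0$ with $\lambda_j\leq\lambda_M$ gives $|\widetilde{D}^{-1}\widetilde{\beta}^{(2)}|_M\leq \varepsilon e^{\lambda_M(T_2-\tau)}\|\zeta\|$. Adding these, taking square roots, and absorbing $C_0/2$ into $\tilde{C}_1$ gives the second inequality of (\ref{e322}).

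\textbf{Main obstacle.} The only nonroutine point is verifying that Lemma \ref{lm302} transfers verbatim from the Gram matrix of $\{\mathds{1}_\omega^*\xi_i\}$ to that of $\{\mathds{1}_{\omega_1}^*\xi_i\}$; this amounts to invoking the unique-continuation spectral estimate of \cite{01} for the open set $\omega_1$ instead of $\omega$, yielding a constant $C_0=C_0(\Omega,\omega_1)$ that is absorbed into $\tilde{C}_1$. Everything else is a direct transcription of the arguments already presented in Theorems \ref{th301} and \ref{th302}, which is why the authors remark that the detailed proofs can be omitted.
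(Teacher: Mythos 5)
Your proposal is correct and is exactly the adaptation the paper intends: it omits the proof of Theorem \ref{th304} precisely because it follows by the method of Theorem \ref{th302}, with the Euler--Lagrange equation for $H$ giving $\widetilde{D}B_M^{\omega_1}\widetilde{D}(b_1,\dots,b_M)'=-\widetilde{\beta}$ and the $\omega_1$-version of Lemma \ref{lm302} (valid since the spectral estimate of \cite{01} and the linear independence hold on any nonempty open set) yielding the two bounds as you describe. No gaps; the substitutions $\widetilde{D}=\mathrm{diag}(e^{-\lambda_j(T_2-\tau)})$, $B_M\mapsto B_M^{\omega_1}$, and the absorption of the $\omega_1$-dependent constant into $\tilde{C}_1$ are exactly what is needed.
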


\section{Sampled-data output feedback  stabilization}
\setcounter{equation}{0}
This section presents and proves our main theorems of this paper.

\subsection{Design of the feedback  law}
Recall that $\gamma_0$ is given by (\ref{wang1.2}),  $\tilde{C}_1$ is given  in Theorem \ref{th302}
and $m$, as well as $\lambda_{m+1}$, is given by (\ref{wanggs1.3}). Let
\begin{equation}\label{liu4.1}
\hat{c}_p=\max\{0, \lambda_{m+1}-3\gamma_0\}.
\end{equation}
Given $\gamma >0$ and $T>0$,   we let
\begin{equation}\label{wang4.1}
N:=\max\left\{j\in \mathds{N}^+\; :\;  \lambda_j< 2\gamma+{(\ln 9)}/{T}\right\};
\end{equation}
\begin{equation}\label{wang4.2}
M:=\max\{j\in \mathds{N}^+\; :\; \lambda_j< C(\gamma,T)\}
\end{equation}
with
\begin{equation}\label{WANG4.3}
\;\;\;\;\;\;C(\gamma, T):=\left(\frac{\tilde{C}_1+\sqrt{\tilde{C}_1^2
+2T\big[\mathrm{ln}(9\sqrt{N})+\tilde{C}_1(1+\gamma_0^{\frac{2}{3}})\big]
+(4\gamma+3\gamma_0)T^2}}{T}\right)^2+\hat{c}_p;
\end{equation}
\begin{equation}\label{wang4.3}
\varepsilon_0:=\frac{1}{9\sqrt{N}}e^{-(2\gamma+\frac{3}{2}\gamma_0+\hat{c}_p) T}.
\end{equation}
 By (\ref{liu4.1}) and (\ref{WANG4.3}), one can easily check that
 $\lambda_{m+1}< C(\gamma, T)$ and $2\gamma+{(\ln 9)}/{T}\leq C(\gamma, T)$.
 From these,  (\ref{wang4.1}) and (\ref{wang4.2}), we have that
 \begin{equation}\label{wang4.4}
 \lambda_M>0,\;\;M\geq N\;\mbox{and}\;\;M\geq m+1.
 \end{equation}
Let  $f_j\in L^2(\omega)$, with  $j=1, 2, \cdots, N$, be
 the  optimal controls to the problem
$(SNP)$ given by (\ref{e301}), with
\begin{equation}\label{YUANGYUAN4.6}
T_1=0,\; T_2=T/2, \;\zeta=\xi_j,\; M\;\mbox{given by}\;(\ref{wang4.2}), \varepsilon=\varepsilon_0\;\mbox{given by}\;(\ref{wang4.3}).
\end{equation}
   Let
 $h_j\in L^2(\omega_1)$, with  $j=1, 2, \cdots, N$, be the  optimal control to the problem  $(INP)$
 given by  (\ref{e319}), where
 \begin{equation}\label{YUANGYUAN4.7}
 T_1=0,\;T_2=T/2,\;\tau=T/4,\;\zeta=\xi_j,\;M\;\mbox{given by}\;(\ref{wang4.2}), \varepsilon=\varepsilon_0\;\mbox{given by}\;(\ref{wang4.3}).
 \end{equation}
  We now define the feedback law $\mathcal{F}_T: L^2(\omega_1)\rightarrow L^2(\omega)$ as follows:
\begin{equation}\label{e402}
\mathcal{F}_T(v)=-\sum_{j=1}^{N}\langle \mathds{1}_{\omega_1}v, \mathds{1}_{\omega_1}h_{j}\rangle f_{j}, \;\; v\in L^2(\omega_1).
\end{equation}
\begin{remark}\label{wangremark4.1}
(i) This feedback law depends on both $T$ and $\gamma$. However,  we only concern its dependence on $T$ in the current study. Thus, we denote it by $\mathcal{F}_T$.

(ii) We simply write $\|\mathcal{F}_T\|$ for $\|\mathcal{F}_T\|_{\mathcal{L}(L^2(\omega_1), L^2(\omega))}$.

(iii) The vectors $f_j$ and $h_j$ are all non-zero. Indeed, one can easily check that
\begin{equation}\label{wangpuchong4.5}
\ \ \ \ \ \ \ \ \   \|P_Me^{A\frac{T}{2}}\xi_j\|=e^{-\frac{\lambda_jT}{2}}>e^{-(2\gamma+\frac{\ln 9}{T})\frac{T}{2}}=\frac{1}{3}e^{-\gamma T}>\varepsilon_0\|\xi_j\|\;\mbox{for all}\;j=1, 2, \dots, N.
\end{equation}
Then we can apply (ii) of Theorem \ref{th301} and (ii) of Theorem \ref{th303} to see that $f_j\neq 0$ and $h_j\neq 0$ for all $j=1, 2, \dots, N$.
\end{remark}
With the aid of the above $\mathcal{F}_T$, we can define the following  closed-loop equation:
\begin{eqnarray}\label{wang4.6}
 \ \ \ \ \ \ \  \ \ \left\{\begin{array}{ll}
 y'(t)- Ay(t)=\mathds{1}_\omega \sum_{i=0}^{\infty}\chi_{[(2i+1)T, (2i+\frac{3}{2})T)}(t)\mathcal{F}_T(\mathds{1}^*_{\omega_1}y((2i+\frac{3}{4})T)),  \ t>0,\\
  y(0)\in L^2(\Omega).
\end{array}\right.
\end{eqnarray}
(It is exactly the same as  (\ref{e102}).)
The first main theorem of this paper is as:

\begin{theorem}\label{th401}
 Let  $\mathcal{F}_T$ be given  by (\ref{e402}), with $T>0$ and $\gamma>0$. Then
any solution $y(\cdot)$ to the equation (\ref{wang4.6}) satisfies that
\begin{equation}\label{e405}
\|y(t)\|\leq \Big(1+\frac{T}{2}\|\mathcal{F}_T\|\Big)e^{(2\gamma_0+3\gamma)T}e^{-\gamma t}\|y(0)\|\;\;\mbox{for all}\;\; t>0.
\end{equation}
\end{theorem}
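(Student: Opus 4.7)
The plan is to reduce Theorem \ref{th401} to a discrete-time contraction on the sampling grid and interpolate in between.  Writing $t_i:=2iT$, $y_i:=y(t_i)$, $w_i:=\mathds{1}^*_{\omega_1}y((2i+\tfrac{3}{4})T)$ and $u_i:=\mathcal{F}_T(w_i)$, the closed-loop equation (\ref{wang4.6}) reads
\begin{equation*}
y_{i+1}=e^{2AT}y_i+e^{AT/2}\mathcal{S}\mathds{1}_{\omega}u_i,\qquad \mathcal{S}v:=\int_0^{T/2}e^{A(T/2-s)}v\,ds,
\end{equation*}
together with $w_i=\mathds{1}^*_{\omega_1}e^{A\cdot 3T/4}y_i$, since the sampling instant lies in the free-evolution phase.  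The target on the grid is the geometric contraction $\|y_{i+1}\|\le e^{-2\gamma T}\|y_i\|$.  From $\lambda_1\ge -\gamma_0$ one has $\|e^{At}\|\le e^{\gamma_0 t}$, hence $\|w_i\|_{\omega_1}\le e^{(3T/4)\gamma_0}\|y_i\|$, $\|u_i\|_{\omega}\le\|\mathcal{F}_T\|e^{(3T/4)\gamma_0}\|y_i\|$, and a routine convolution argument yields the interpolation bound $\|y(t)\|\le (1+(T/2)\|\mathcal{F}_T\|)e^{2\gamma_0 T}\|y_i\|$ for $t\in[t_i,t_{i+1}]$.  Iterating the contraction gives $\|y_i\|\le e^{-2i\gamma T}\|y(0)\|$; combining with the interpolation and $t_i\ge t-2T$ yields (\ref{e405}).

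To prove the contraction, expand $y_i=\sum_j a_j^{(i)}\xi_j$ and split $y_{i+1}$ according to $P_N$, $P_M-P_N$, and $I-P_M$.  The last two pieces are handled by spectral decay.  For $N<j\le M$, (\ref{wang4.1}) gives $\lambda_j\ge 2\gamma+(\ln 9)/T$, so $e^{-2\lambda_j T}\le e^{-4\gamma T}/81$; for $j>M$, $\lambda_j\ge C(\gamma,T)$ from (\ref{wang4.2})--(\ref{WANG4.3}) produces a much smaller factor, giving enough margin to absorb the feedback-induced leakage into these modes, itself controlled by the upper bounds on $\|f_j\|_{\omega}$ and $\|h_j\|_{\omega_1}$ from Theorems \ref{th302} and \ref{th304}.

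The crux is the $P_N$ piece.  By (iii) of Theorems \ref{th301} and \ref{th303}, $f_j=\tfrac{2}{T}\mathds{1}^*_{\omega}\int_0^{T/2}e^{A(T/2-t)}\Phi_j\,dt$ and $h_j=\mathds{1}^*_{\omega_1}e^{AT/4}\Psi_j$ for the unique minimizers $\Phi_j,\Psi_j\in X_M$ with $\zeta=\xi_j$.  Testing the Euler--Lagrange relation for $(INP)$ against $\Psi=\xi_k$ with $1\le k\le M$ yields
\begin{equation*}
\langle\mathds{1}^*_{\omega_1}\xi_k,h_j\rangle_{\omega_1}=-e^{-\lambda_k T/4}\delta_{jk}-\varepsilon_0 e^{\lambda_k T/4}\langle\Psi_j/\|\Psi_j\|,\xi_k\rangle,
\end{equation*}
while Theorem \ref{th301}(iii) gives $P_M\mathcal{S}\mathds{1}_{\omega}f_j=-e^{-\lambda_j T/2}\xi_j+\eta_j$ with $\|\eta_j\|\le\varepsilon_0$.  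Inserting the first identity into the expansion $\langle w_i,h_j\rangle_{\omega_1}=\sum_k a_k^{(i)}e^{-\lambda_k\cdot 3T/4}\langle\mathds{1}^*_{\omega_1}\xi_k,h_j\rangle_{\omega_1}$ produces $\langle w_i,h_j\rangle_{\omega_1}=-a_j^{(i)}e^{-\lambda_j T}+R_j^{(i)}$, where $R_j^{(i)}$ collects an $\varepsilon_0$-weighted sum over $k\le M$ and a spectral tail over $k>M$ (bounded via $\|h_j\|_{\omega_1}$).  Feeding both identities into $P_N y_{i+1}$, the leading $\sum_{j\le N}e^{-2\lambda_j T}a_j^{(i)}\xi_j$ from $P_N e^{2AT}y_i$ cancels exactly against the $-\langle w_i,h_j\rangle_{\omega_1}e^{-\lambda_j T}\xi_j$ produced by the feedback, leaving three residual pieces: (a) the $R_j^{(i)}e^{-\lambda_j T}$ terms, (b) the $\eta_j$'s amplified by $e^{AT/2}$, and (c) the $(P_M-P_N)$-leakage from the $\eta_j$'s.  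The calibrations (\ref{wang4.1}), (\ref{wang4.2})--(\ref{WANG4.3}), (\ref{wang4.3}) of $N$, $M$, $\varepsilon_0$, together with the quantitative upper bounds of Theorems \ref{th302} and \ref{th304}, are tuned so that each residual is dominated by a fraction of $e^{-2\gamma T}\|y_i\|$.

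The main obstacle will be this last bookkeeping in the $P_N$ step: assembling the two Euler--Lagrange $\varepsilon_0$ error terms, the tail-mode truncation from $k>M$, the $(P_M-P_N)$-leakage generated by the $\eta_j$'s, and the sharp size bounds on $\|f_j\|_{\omega}$ and $\|h_j\|_{\omega_1}$, and verifying that the specific choices in (\ref{wang4.1})--(\ref{wang4.3})---in particular the quadratic-in-$T$ expression $C(\gamma,T)$ and the prefactor $1/(9\sqrt{N})$ in $\varepsilon_0$---make every residual fit under $e^{-2\gamma T}$ simultaneously.  Once this is established, the remaining steps (the semigroup growth estimate and the iteration on the grid) are routine.
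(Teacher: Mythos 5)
Your overall architecture coincides with the paper's: prove the one-period contraction $\|y((2n+2)T)\|\leq e^{-2\gamma T}\|y(2nT)\|$ and then interpolate over the three subintervals (your interpolation factor $(1+\frac{T}{2}\|\mathcal{F}_T\|)e^{2\gamma_0T}$ and the shift $t\leq 2nT+2T$ indeed give a bound at least as good as (\ref{e405})). For the contraction the paper splits $y((2n+2)T)=I_1+I_2+I_3$ (high modes $>N$ of the free evolution, the error between $P_Ny((2n+1)T)$ and the sampled reconstruction, and the controlled evolution of the reconstruction), using only the admissibility inequalities $\|P_M\psi_j(T/2)\|\leq\varepsilon_0$ and $\|P_M\varphi_j(T/2)\|\leq\varepsilon_0$; you instead split by the projections $P_N$, $P_M-P_N$, $I-P_M$ of the final state and feed in the exact Euler--Lagrange identities. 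Your identities are correct (they follow from $P_Mz(T/2;\xi_j,h_j)=-\varepsilon_0\Psi_j/\|\Psi_j\|$ and $P_My(T/2;\xi_j,f_j)=-\varepsilon_0\Phi_j/\|\Phi_j\|$, legitimate since (\ref{wangpuchong4.5}) gives $\Phi_j,\Psi_j\neq 0$), and the cancellation of $\sum_{j\leq N}e^{-2\lambda_jT}a_j^{(i)}\xi_j$ you describe is exactly the mechanism the paper exploits; so this is essentially the same proof, reorganized.

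The genuine gap is the one you flag yourself and then defer: you never verify that the residuals fit under $e^{-2\gamma T}$, you only assert that the constants ``are tuned so that'' they do, and that verification is where the theorem is actually proved. Concretely, three things must be checked. First, before invoking Theorems \ref{th302} and \ref{th304} one must verify their hypothesis $\|P_Me^{A\frac{T}{2}}\xi_j\|>\varepsilon_0\|\xi_j\|$; this is (\ref{wangpuchong4.5}) and uses the definition (\ref{wang4.1}) of $N$. Second, the $\varepsilon_0$-residuals (your $R_j^{(i)}$ and $\eta_j$ terms) must beat the accumulated semigroup growth $e^{\gamma_0T/2}$, $e^{3\gamma_0T/4}$, $e^{\gamma_0T}$; this is exactly why $\varepsilon_0=\frac{1}{9\sqrt{N}}e^{-(2\gamma+\frac{3}{2}\gamma_0+\hat{c}_p)T}$ carries the prefactor $\frac{1}{9\sqrt{N}}$ and the extra exponential. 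Third, the $(I-P_M)$-leakage carries the control norms, of size $e^{\tilde{C}_1(1+\gamma_0^{2/3}+\sqrt{\lambda_M})}$ by Theorems \ref{th302} and \ref{th304}, so the spectral gain $e^{-\lambda_{M+1}T/2}$ must dominate this growth in $\sqrt{\lambda_M}$; this is the inequality (\ref{add001}), and it is precisely what the quadratic expression (\ref{WANG4.3}) defining $M$ is built to guarantee. With these three inputs the paper's three pieces come in at $\frac{1}{9}$, $\frac{1}{3}$ and $\frac{4}{9}$ of $e^{-2\gamma T}$, so the budget closes with total $\frac{8}{9}$. None of your steps would fail, but as written the proposal is an outline at exactly this decisive quantitative step; to make it a proof you must carry out the analogues of the paper's estimates (\ref{e411}), (\ref{e412}), (\ref{wangpuchong4.25}) in your grouping.
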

\begin{proof} Arbitrarily fix $T>0$ and $\gamma>0$. Let $y(\cdot)$ be a solution to
(\ref{wang4.6}).
 We first show that
 \begin{equation}\label{e404}
\|y((2n+2)T)\|\leq e^{-2\gamma T}\|y(2nT)\|\;\;\mbox{for all}\;\; n\in \mathds{N}.
\end{equation}
  Define the operator $\mathcal{R}: L^2(\omega_1)\rightarrow X_N$  by
 $\mathcal{R}(v):=-\sum_{j=1}^{N}
 \langle\mathds{1}_{\omega_1}v,\mathds{1}_{\omega_1}h_{j}\rangle \xi_j$ for each $v\in L^2(\omega_1)$
 and the operator $\mathcal{Q}: X_N\rightarrow L^2(\omega)$ by
 $\mathcal{Q}(\zeta)=\sum_{j=1}^{N}\langle \zeta, \xi_j\rangle f_j$
 for each $\zeta\in X_N$.
 Then by (\ref{e402}), we see that $\mathcal{F}_T=\mathcal{Q}\circ \mathcal{R}$.
 From this, using the fact:
\begin{eqnarray*}
y((2n+2)T)&=&e^{TA}y((2n+1)T)\\ \nonumber
&+&e^{\frac{AT}{2}}
\int_{(2n+1)T}^{(2n+\frac{3}{2})T}
e^{((2n+\frac{3}{2})T-t)A}
\mathds{1}_{\omega}\mathcal{F}_T(\mathds{1}_{\omega_1}^*y((2n+{3}/{4})T))dt,
\end{eqnarray*}
we obtain that
\begin{eqnarray}\label{e408}
y((2n+2)T)=I_1+I_2+I_3,
\end{eqnarray}
where
\begin{eqnarray*}
I_1:=e^{TA}[y((2n+1)T)-P_N y((2n+1)T)],
\end{eqnarray*}
\begin{eqnarray*}
I_2:=e^{TA}[P_N y((2n+1)T)-\mathcal{R}(\mathds{1}_{\omega_1}^*y((2n+{3}/{4})T))],
\end{eqnarray*}
\begin{eqnarray*}
I_3&:=&e^{TA}\mathcal{R}(\mathds{1}_{\omega_1}^*y((2n+{3}/{4})T))
\\ \nonumber
&+&e^{\frac{T}{2}A}
\int_{(2n+1)T}^{(2n+{3}/{2})T}
e^{((2n+\frac{3}{2})T-t)A}\mathds{1}_\omega\mathcal{Q}\circ \mathcal{R}(\mathds{1}_{\omega_1}^*y((2n+{3}/{4})T))dt.
\end{eqnarray*}
We will prove (\ref{e404})
 by  the following three steps:

\noindent{\it Step 1. We estimate  $I_1$.}

Because
$y(t)=e^{A(t-2nT)}y(2nT)$ for each $t\in [2nT, (2n+1)T]$,
 we can use (\ref{wang4.1}) to see that
\begin{eqnarray}\label{e409}
\ \ \ \ \ \|I_1\|
=\|e^{2TA}(I-P_N)y(2nT)\|
\leq e^{-2\lambda_{N+1}T}\|y(2nT)\|
\leq \frac{1}{9}e^{-2\gamma T}\|y(2nT)\|.
\end{eqnarray}

\noindent{\it Step 2. We estimate $I_2$.}

Arbitrarily fix $1\leq j\leq N$. Let $\psi_{j}(\cdot)$ be the solution of the equation
(\ref{e320}) where $T_1=0$, $T_2=T/2$, $\tau=T/4$, $\zeta=\xi_j$ and  $h=h_j$.
Several facts are given in order.

Fact one: Since $h_j$ is optimal to the problem $(INP)$
given by  (\ref{e319}) with (\ref{YUANGYUAN4.7}), it is admissible to this problem. So we have that
\begin{equation}\label{e410}
\|P_M\psi_{j}({T}/{2})\|\leq \varepsilon_0\|\xi_j\|.
\end{equation}

Fact two: It is clear  that
\begin{equation}\label{ggsswang4.14}
\psi_{j}({T}/{2})=e^{A\frac{T}{2}}\xi_j+e^{A\frac{T}{4}}\mathds{1}_{\omega_1}h_{j}.
\end{equation}

Fact three: Since $N\leq M$, we have that
\begin{equation}\label{wangbucong4.15}
(I-P_M)e^{A\frac{T}{2}}\xi_j=0\;\;\mbox{for all}\;\;1\leq j\leq N.
\end{equation}

Fact four: By (\ref{wangpuchong4.5}),  we apply Theorem \ref{th304},
with (\ref{YUANGYUAN4.7}),
to obtain that
\begin{equation}\label{wangbucong4.16}
\|h_j\|_{\omega_1}\leq  e^{\tilde{C}_1(1+\gamma_0^{\frac{2}{3}}+\sqrt{\lambda_{M}})}(e^{-\lambda_1T/4}+e^{\lambda_M T/4}\varepsilon_0)\|\xi_j\|\;\;\mbox{for all}\;\;j=1,\dots, N.
\end{equation}

Now, from (\ref{ggsswang4.14}), (\ref{wangbucong4.15}) and (\ref{wangbucong4.16}), we find that
\begin{eqnarray}\label{e411}
\|(I-P_M)\psi_{j}({T}/{2})\|
&\leq&e^{\frac{-\lambda_{M+1}T}{4}}\|h_j\|_{\omega_1}\nonumber\\
&\leq&e^{\frac{-\lambda_{M+1}T}{4}}
e^{\tilde{C}_1\big(1+\gamma_0^{\frac{2}{3}}
+\sqrt{\lambda_{M}}\big)}\big(e^{\frac{\gamma_0T}{4}}
+e^{\frac{\lambda_{M}T}{4}}\varepsilon_0\big)\nonumber\\
&\leq&e^{\tilde{C}_1\big(1+\gamma_0^{\frac{2}{3}}
+\sqrt{\lambda_{M}}\big)}\big(e^{\frac{\gamma_0T}{4}}+\varepsilon_0\big).
\end{eqnarray}
Then by  (\ref{e410}), (\ref{e411}) and  the definitions of $\mathcal{R}$ and $\psi_{j}$, we see that
\begin{eqnarray}\label{e412}
\ \ \ &&\|P_N y((2n+1)T)-\mathcal{R}(\mathds{1}_{\omega_1}^*y((2n+{3}/{4})T))\|\nonumber\\
&\leq&\Big\|\sum_{j=1}^{N}\big\langle y(2nT), \;e^{A\frac{T}{2}}P_M\psi_{j}({T}/{2}) \big\rangle\xi_j\Big\|\nonumber\\
 &+&\Big\|\sum_{j=1}^{N}\big\langle y(2nT), \; e^{A\frac{T}{2}}(I-P_M)\psi_{j}({T}/{2}) \big\rangle\xi_j\Big\|\nonumber\\
&\leq&\sqrt{N}\Big[e^{\frac{\gamma_0 T}{2}}\varepsilon_0
+e^{\frac{-\lambda_{M+1}T}{2}}
e^{\tilde{C}_1\big(1+\gamma_0^{\frac{2}{3}}
+\sqrt{\lambda_{M}}\big)}(e^{\frac{\gamma_0T}{4}}+\varepsilon_0)\Big]
\|y(2nT)\|\nonumber\\
&\leq&\frac{1}{3}e^{-(2\gamma+\gamma_0) T}\|y(2nT)\|.
\end{eqnarray}
In the last inequality in (\ref{e412}), we used (\ref{wang4.3}) and the fact that
\begin{equation}\label{add001}
e^{\gamma_0T}\sqrt{N}e^{\frac{
-\lambda_{M+1}T}{2}}e^{\tilde{C}_1
\big(1+\gamma_0^{\frac{2}{3}}+\sqrt{\lambda_{M+1}}\big)}
e^{\frac{\gamma_0 T}{2}}\leq \frac{1}{9}e^{-2\gamma T},
\end{equation}
which follows from  (\ref{wang4.2}) and (\ref{WANG4.3}).
Finally, by  (\ref{e412}), we find that
\begin{eqnarray}\label{e413}
\ \ \ \ \ \ \ \ \ \ \|I_2\|=\|e^{TA}[P_N y((2n+1)T)-\mathcal{R}(\mathds{1}_{\omega_1}^*y((2n+{3}/{4})T))]\|
\leq \frac{1}{3}e^{-2\gamma T}\|y(2nT)\|.
\end{eqnarray}

\noindent{\it Step 3. We estimate  $I_3$.}

Arbitrarily fix $1\leq j\leq N$. Let  $\varphi_{j}$
 be the solution of  (\ref{e302}),
 where   $T_1=0$, $T_2=T/2$, $\zeta=\xi_j$ and $f=f_j$.
 Several facts are given in order.

 Fact one: It is clear that
 \begin{equation}\label{sgsgwang4.21}
 \varphi_{j}({T}/{2})=e^{\frac{T}{2}A}\xi_j
 +\int_0^{\frac{T}{2}}e^{A(\frac{T}{2}-t)}\mathds{1}_{\omega}f_{j}dt.
 \end{equation}

Fact two: Since $f_j\in L^2(\omega)$ is
 the  optimal control to the problem
$(SNP)$ given by (\ref{e301}) with (\ref{YUANGYUAN4.6}),
 we see that $f_j\in L^2(\omega)$ is
an admissible control to this problem. This, along with (\ref{wang4.3}), yields that
\begin{equation}\label{e414}
\|e^{\frac{T}{2}A}P_M\varphi_{j}({T}/{2})\|\leq e^{\frac{\gamma_0T}{2}}\varepsilon_0\|\xi_j\|\leq \frac{1}{9\sqrt{N}}e^{-(2\gamma+\gamma_0) T}.
\end{equation}

Fact three: We have that
\begin{eqnarray}\label{e415}
\|e^{\frac{T}{2}A}(I-P_M)\varphi_{j}({T}/{2})\|\leq  \frac{2}{9\sqrt{N}}e^{-(2\gamma+\gamma_0) T}.
\end{eqnarray}
Indeed, since $M\geq N$ (see (\ref{wang4.4})) and $\xi_j$ is an eigenfunction of $A$, we have that
$(I-P_M)e^{\frac{T}{2}A}\xi_j=0$.
Thus, by (\ref{sgsgwang4.21}), we see that
\begin{eqnarray}\label{wangpuchong4.24}
\|e^{\frac{T}{2}A}(I-P_M)\varphi_{j}({T}/{2})\|
&=&\Big\|e^{\frac{T}{2}A}(I-P_M)
\int_0^{\frac{T}{2}}e^{A(\frac{T}{2}-t)}\mathds{1}_{\omega}f_{j}dt\Big\|\nonumber\\
&\leq& e^{\frac{-\lambda_{M+1}T}{2}}
\int_0^{\frac{T}{2}}e^{-\lambda_{M+1}(\frac{T}{2}-t)}dt\|f_{j}\|_\omega.
\end{eqnarray}
Meanwhile, because of (\ref{wangpuchong4.5}), we can apply   Theorem \ref{th302},
with (\ref{YUANGYUAN4.6}),
 to obtain that
\begin{eqnarray}\label{wangpuchong4.25}
 e^{\frac{-\lambda_{M+1}T}{2}}
\int_0^{\frac{T}{2}}e^{-\lambda_{M+1}(\frac{T}{2}-t)}dt\|f_{j}\|_\omega
&\leq& e^{\frac{-\lambda_{M+1}T}{2}}e^{\tilde{C}_1
\big(1+\gamma_0^{\frac{2}{3}}+\sqrt{\lambda_{M}}\big)}
\Big(\frac{\gamma_0T}{2}\frac{e^{\frac{\gamma_0 T}{2}}}{e^{\frac{\gamma_0 T}{2}}-1}+\varepsilon_0\Big)\nonumber\\
&\leq& e^{\frac{-\lambda_{M+1}T}{2}}e^{\tilde{C}_1
\big(1+\gamma_0^{\frac{2}{3}}+\sqrt{\lambda_{M}}\big)}\Big(e^{\frac{\gamma_0 T}{2}}+\varepsilon_0\Big)\nonumber\\
&\leq&  \frac{2}{9\sqrt{N}}e^{-(2\gamma+\gamma_0) T}.
\end{eqnarray}
(To get  the second inequality in (\ref{wangpuchong4.25}), we used the inequality:   $x<e^x-1$, when $x>0$; to get the last inequality in (\ref{wangpuchong4.25}), we used (\ref{add001}).) Now (\ref{e415}) follows from (\ref{wangpuchong4.24}) and (\ref{wangpuchong4.25}) at once.

Fact four: Simply  write $\zeta_n:=\mathcal{R}(\mathds{1}_{\omega_1}^*y((2n+\frac{3}{4})T))$. Then  by the definitions of $\mathcal{R}$ and $\mathcal{Q}$, we find that
\begin{eqnarray}\label{wangpuchong4.26}
I_3
&=&e^{TA}\sum_{j=1}^N\langle\zeta_n,\xi_j\rangle\xi_j
+e^{\frac{T}{2}A}\int_{0}^{\frac{T}{2}}
e^{(\frac{T}{2}-t)A}\mathds{1}_{\omega}
\mathcal{Q}\Big(\sum_{j=1}^N\langle\zeta_n,\xi_j\rangle\xi_j\Big)dt\nonumber\\
&=&\sum_{j=1}^N\langle\zeta_n,\xi_j\rangle e^{\frac{T}{2}A}[P_M\varphi_{j}({T}/{2})+(I-P_M)\varphi_{j}({T}/{2})].
\end{eqnarray}
From (\ref{wangpuchong4.26}),   (\ref{e414}), (\ref{e415}) and (\ref{e412}), it follows that
\begin{eqnarray}\label{e416}
\|I_3\|&\leq&
\frac{1}{3\sqrt{N}}e^{-(2\gamma+\gamma_0) T} \sqrt{N}\|\zeta_n\|\nonumber\\
&\leq& \frac{1}{3}e^{-(2\gamma+\gamma_0) T} \|P_N y((2n+1)T)\|
+\frac{1}{3}e^{-(2\gamma+\gamma_0) T}\|P_N y((2n+1)T)-\zeta_n\|\nonumber\\
&\leq& \frac{1}{3}e^{-2\gamma T}\|y(2nT)\|+\frac{1}{3}e^{-(2\gamma+\gamma_0) T} \frac{1}{3}e^{-2\gamma T}\|y(2nT)\|\nonumber\\
&\leq& \frac{4}{9} e^{-2\gamma T}\|y(2nT)\|.
\end{eqnarray}

\vskip5pt
\noindent {\it Step 4. Proof of (\ref{e404}) }

From (\ref{e408}),
(\ref{e409}), (\ref{e413}) and (\ref{e416}), the inequality (\ref{e404}) follows at once.

\vskip 5pt

We now prove (\ref{e405}). Arbitrarily fix  $t\in \mathds{R}^+$.
 There is $n\in \mathds{N}$ so that
 \begin{eqnarray*}
 t\in (2nT, (2n+2)T]&=&(2nT, (2n+1)T]\bigcup((2n+1)T, (2n+{3}/{2})T]\\ \nonumber
 &\bigcup&
 ((2n+{3}/{2})T, (2n+2)T].
 \end{eqnarray*}

\noindent $(a)$ In the case when $t\in (2nT, (2n+1)T]$,
it follows from (\ref{wang4.6}) and (\ref{e404}) that
\begin{eqnarray}\label{e418}
\|y(t)\|&\leq& e^{\gamma_0 T}\|y(2nT)\|\leq e^{\gamma_0 T}e^{-2\gamma nT}\|y(0)\|\nonumber\\
&\leq& e^{\gamma_0 T}e^{-\gamma t}e^{2\gamma\big(\frac{t}{2}-nT\big)}\|y(0)\| \leq e^{(\gamma_0+\gamma) T}e^{-\gamma t}\|y(0)\|.
\end{eqnarray}

\noindent $(b)$ In the case where  $t\in ((2n+1)T, (2n+{3}/{2})T]$, it follows from (\ref{wang4.6}) and (\ref{e404}) that
\begin{eqnarray}\label{e419}
\|y(t)\|
&\leq& e^{\frac{\gamma_0T}{2}}\|y((2n+1)T)\|+\|\mathcal{F}_T\|
\int_{(2n+1)T}^te^{\gamma_0(t-s)}ds\|y(2n+{3}/{4})T\|\nonumber\\
&\leq& (1+({T}/{2})\|\mathcal{F}_T\|)e^{\big(\frac{3}{2}\gamma_0
+\frac{7}{4}\gamma\big)T}e^{-\gamma t}\|y(0)\|.
\end{eqnarray}

\noindent $(c)$ In the case that  $t\in ((2n+{3}/{2})T, (2n+2)T]$,
it follows from (\ref{wang4.6}) and (\ref{e404}) that
\begin{eqnarray}\label{e420}
\|y(t)\|
\leq e^{\frac{\gamma_0 T}{2}}\|y((2n+{3}/{2})T)\|
\leq (1+({T}/{2})\|\mathcal{F}_T\|)e^{(2\gamma_0+3\gamma)T}e^{-\gamma t}\|y(0)\|.
\end{eqnarray}

Now (\ref{e405}) follows from (\ref{e418}), (\ref{e419}) and (\ref{e420}).
This ends the proof of Theorem \ref{th401}.
\end{proof}

\subsection{Estimates on the feedback law}
The second main theorem of this paper gives some estimates on $\|\mathcal{F}_T\|$ in terms of $T$.
\begin{theorem}\label{theoremonestimate}
Let  $\mathcal{F}_T$ be given  by (\ref{e402}), with $T>0$ and $\gamma>0$. Then
there are two positive constants $C_1$ and $C_2$ depending on $\gamma_0$
$\Omega$, $\omega$, $\omega_1$ and $d$
so that
\begin{equation}\label{e406}
\alpha(T)\leq \|\mathcal{F}_T\|\leq e^{C_1(1+\frac{1}{T})+\frac{\gamma_0T}{4}}e^{C_2(1+\frac{1}{T})\gamma},
\end{equation}
where  $\alpha:(0,+\infty)\rightarrow (0,+\infty)$ satisfies that
\begin{eqnarray}\label{WANGGS4.11}
\alpha(T)=O({1}/{T})\ \mathrm{as}\ T\rightarrow 0; \ \ \ \
\alpha(T)=O(e^{-\frac{\lambda_1T}{4}})\ \mathrm{as}\ T\rightarrow +\infty.
\end{eqnarray}
\end{theorem}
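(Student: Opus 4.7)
The proof splits into the upper and lower bounds of \eqref{e406}.

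For the upper bound, the definition \eqref{e402} and the Cauchy--Schwarz inequality immediately yield $\|\mathcal{F}_T\|\leq \sum_{j=1}^N \|h_j\|_{\omega_1}\|f_j\|_\omega$. Applying Theorem~\ref{th302} with the parameters \eqref{YUANGYUAN4.6} bounds $\|f_j\|_\omega$ by $e^{\tilde{C}_1(1+\gamma_0^{2/3}+\sqrt{\lambda_M})}\bigl(\tfrac{\gamma_0}{1-e^{-\gamma_0 T/2}}+\tfrac{\varepsilon_0}{\alpha_M}\bigr)$, and Theorem~\ref{th304} with \eqref{YUANGYUAN4.7} bounds $\|h_j\|_{\omega_1}$ by $e^{\tilde{C}_1(1+\gamma_0^{2/3}+\sqrt{\lambda_M})}(e^{-\lambda_1 T/4}+e^{\lambda_M T/4}\varepsilon_0)$. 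One then translates $N,\lambda_M,\varepsilon_0,\alpha_M$ back into $T,\gamma,\gamma_0$ using \eqref{wang4.1}--\eqref{wang4.3} together with Weyl's asymptotic $\lambda_j\sim c_\Omega j^{2/d}$; this yields $N=O((\gamma+1/T)^{d/2})$ and, from the explicit form \eqref{WANG4.3}, $\sqrt{\lambda_M}=O(1/T+\sqrt{\gamma}+1)$. Collecting the resulting exponents matches the stated form $e^{C_1(1+1/T)+\gamma_0T/4}e^{C_2(1+1/T)\gamma}$.

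For the lower bound I would evaluate $\mathcal{F}_T$ on the probe $v_1:=\mathds{1}_{\omega_1}^*\xi_1$, which has $\|v_1\|_{\omega_1}\leq 1$. Writing $a_{1k}:=\langle v_1,h_k\rangle_{\omega_1}$, we have $\mathcal{F}_T(v_1)=-\sum_{k=1}^N a_{1k}f_k$. To identify $a_{11}$ as the dominant coefficient I would apply the closed-loop inequality \eqref{e412} from the proof of Theorem~\ref{th401} to the initial condition $y(0)=\xi_1$. Since $y(t)=e^{-\lambda_1 t}\xi_1$ on $[0,T]$, that inequality becomes $\|e^{-\lambda_1 T}\xi_1+e^{-3\lambda_1 T/4}\sum_k a_{1k}\xi_k\|\leq \frac{1}{3}e^{-(2\gamma+\gamma_0)T}$, and orthonormality of $\{\xi_k\}$ forces both $|a_{11}+e^{-\lambda_1 T/4}|$ and each $|a_{1k}|$ for $k\geq 2$ to be at most $\frac{1}{3}e^{(3\lambda_1/4-2\gamma-\gamma_0)T}$. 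Combining this with the lower bound $\|f_1\|_\omega\geq (e^{-\lambda_1 T/2}-\varepsilon_0)/\alpha_1$ from Theorem~\ref{th302} (applicable by \eqref{wangpuchong4.5}) yields $\|\mathcal{F}_T\|\geq c\,e^{-\lambda_1 T/4}\|f_1\|_\omega$ for an explicit $c>0$. Since $\alpha_1\sim T/2$ gives $\|f_1\|_\omega\sim 2/T$ as $T\to 0$, and $\|f_1\|_\omega$ remains bounded below while $e^{-\lambda_1 T/4}\to\infty$ as $T\to\infty$, one recovers the orders $O(1/T)$ and $O(e^{-\lambda_1 T/4})$ required by \eqref{WANGGS4.11}.

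The main obstacle is controlling the off-diagonal interference $\sum_{k\geq 2}|a_{1k}|\|f_k\|_\omega$. For large $T$ it is immediately swamped thanks to the exponential decay $e^{(3\lambda_1/4-2\gamma-\gamma_0)T}\to 0$, but for small $T$ each $|a_{1k}|$ is only $O(1)$ and $N$ grows like $(\ln(1/T))^{d/2}$, so a crude triangle inequality is too weak. My remedy would be either to exploit the quasi-orthogonality of $\{\mathds{1}_\omega^*\xi_j\}$ via Lemma~\ref{lm302} to bound $\|\sum_{k\geq 2}a_{1k}f_k\|_\omega$ more tightly, or to replace $v_1$ by a corrected probe whose inner products against $h_k$ for $k\geq 2$ vanish by design. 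Once this cancellation is secured, matching the asymptotic orders in \eqref{WANGGS4.11} is a routine computation.
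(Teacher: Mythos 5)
Your upper bound is essentially the paper's own argument (Cauchy--Schwarz on the sum in (\ref{e402}), the upper estimates of Theorems \ref{th302} and \ref{th304} with the data (\ref{YUANGYUAN4.6}), (\ref{YUANGYUAN4.7}), Weyl's law for $N$ and the explicit bound on $\sqrt{\lambda_M}$), and that part is fine.

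The lower bound, however, has a genuine gap, and it sits exactly at the asymptotics $\alpha(T)=O(1/T)$ as $T\to 0$ that the theorem requires. With the probe $v_1=\mathds{1}_{\omega_1}^*\xi_1$ and the closed-loop estimate (\ref{e412}) you only learn $|a_{1k}|\leq\frac13 e^{(3\lambda_1/4-2\gamma-\gamma_0)T}$ for $k\geq 2$, i.e.\ a bound of order $1$ for small $T$, while the only available control on $\|f_k\|_\omega$ is the upper bound (\ref{e426}), which is of size $e^{C(1+1/T)}$; moreover $N$ grows like $T^{-d/2}$ by (\ref{wang4.1}) and Weyl's law, not like $(\ln(1/T))^{d/2}$. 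So the interference term $\sum_{k\geq2}|a_{1k}|\,\|f_k\|_\omega$ is exponentially larger than the claimed main term of order $1/T$, and neither of your remedies closes this: Lemma \ref{lm302} compares $\|\sum_{k\geq2}a_{1k}f_k\|_\omega$ with the Euclidean norm of expansion coefficients you do not control, and in the useful direction it costs a factor $e^{C_0(1+\gamma_0^{2/3}+\sqrt{\lambda_M})}=e^{C(1+1/T)}$, which destroys any $1/T$ bound; a ``corrected probe'' orthogonal to $h_2,\dots,h_N$ would require a quantitative lower bound on the component of $h_1$ orthogonal to $\mathrm{span}\{h_2,\dots,h_N\}$, and nothing in the construction (the $h_j$ are minimizers of the functionals $H$ with data $\xi_j$) prevents the $h_j$ from being nearly, or even exactly, linearly dependent. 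The large-$T$ half of your argument does go through (the interference is $O(e^{-2\gamma T})$, $|a_{11}|$ is of order $e^{-\lambda_1T/4}$, and $\|f_1\|_\omega$ stays bounded below since $\lambda_1<0$), but both asymptotics in (\ref{WANGGS4.11}) are needed, so the proof as proposed is incomplete.

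The paper avoids the interference problem altogether: it never expands $\mathcal{F}_T(v)$ into the $f_k$'s. For any $v$ it sets $\zeta^v=\sum_{j=1}^N\langle v,h_j\rangle_{\omega_1}\xi_j$ as in (\ref{wanG4.40}) and observes, via (\ref{e431}), that the single control $-\mathcal{F}_T(v)$ is admissible for the problem $(SNP)$ with data $\zeta=\zeta^v$, $\varepsilon=\sqrt{N}\varepsilon_0$, $M$ as in (\ref{wang4.2}); minimality then gives $\|\mathcal{F}_T(v)\|_\omega\geq\|\hat{f^v}\|_\omega$, and the lower estimate of Theorem \ref{th302} applied to $\zeta^v$ yields $\|\mathcal{F}_T(v)\|_\omega\geq\frac{1}{\alpha_1}\big(\|P_Me^{A\frac{T}{2}}\zeta^v\|-\sqrt{N}\varepsilon_0\|\zeta^v\|\big)$, with no cross terms to estimate. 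Taking $v=h_1/\|h_1\|_{\omega_1}$ and using $\|h_1\|_{\omega_1}\geq\frac89 e^{-\gamma_0 T/4}$ from Theorem \ref{th304} then produces the two quantities $m_1(T)=O(1/T)$ and $m_2(T)=O(e^{-\lambda_1T/4})$, and $\alpha(T)=\max\{m_1(T),m_2(T)\}$. If you want to keep your probe, the fix is to replace the expansion of $\mathcal{F}_T(v_1)$ by this optimality comparison; your observation from (\ref{e412}) that $\langle v_1,h_1\rangle_{\omega_1}$ is close to $-e^{-\lambda_1T/4}$ can then play the role of the paper's lower bound on $\|h_1\|_{\omega_1}$, but the interference estimate itself cannot be repaired by Lemma \ref{lm302} or by a generic corrected probe.
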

\begin{proof}
 Arbitrarily fix  $T>0$ and $\gamma>0$.
  We divide the proof by the following three steps:
\vskip 5pt

\noindent {\it Step 1. We estimates  $h_j$ and $f_j$.}

First of all, by (\ref{wang4.1}), we can use  the Weyl's asymptotic theorem to find that
\begin{eqnarray}\label{e421}
N\leq \tilde{C}_0(1+\gamma_0+\frac{1}{T}+\gamma)^{d/2},
\end{eqnarray}
for some constant $\tilde{C}_0=C(\Omega, d)$.
Several facts are given in order.

Fact one: By (\ref{wang4.2}), (\ref{WANG4.3}),
  (\ref{e421}) and the  inequality:
 $\ln(1+x)< x$ for each $x>0$,
  we can verify that
\begin{eqnarray}\label{e422}
\sqrt{\lambda_M}
&<&\frac{2\tilde{C}_1+\sqrt{\frac{d}{2}}}{T}
+\frac{\sqrt{2\Big[\mathrm{ln}(9\sqrt{\tilde{C}_0})
+\frac{d}{4}(\gamma_0+\gamma)
+\tilde{C}_1(1+\gamma_0^\frac{2}{3})\Big]}}{\sqrt{T}}
\nonumber\\
&+&\sqrt{4\gamma+3\gamma_0}+\sqrt{\hat{c}_p}
\leq\frac{\tilde{C}_2}{T}+\tilde{C}_3.
\end{eqnarray}
Here, $\tilde{C}_1$ is given in (\ref{WANG4.3}),
$$
\tilde{C}_2=2\tilde{C}_1+\sqrt{\frac{d}{2}}
+\frac{5\Big[\mathrm{ln}(9\sqrt{\tilde{C}_0})
+\frac{d}{4}(\gamma_0+\gamma)
+\tilde{C}_1(1+\gamma_0^\frac{2}{3})\Big]}{2\sqrt{4\gamma+3\gamma_0}}, \tilde{C}_3=\frac{6}{5}\sqrt{4\gamma+3\gamma_0}+\sqrt{\hat{c}_p}.
$$

Fact two: Since $e^x-1>x$ for all $x>0$, we have that
\begin{equation}\label{e424}
\frac{\gamma_0}{1-e^{-\gamma_0 \frac{T}{2}}}=\gamma_0+\frac{\gamma_0}{e^{\gamma_0 \frac{T}{2}}-1}\leq \gamma_0+\frac{2}{T}.
\end{equation}

Fact three: By (\ref{ggsswang3.17}) (with $T_1=0$ and $T_2=T/2$) and (\ref{e422}), we have that
\begin{equation}\label{e425}
\frac{1}{\alpha_M}\leq \frac{1}{\int_0^{\frac{T}{2}}e^{-({\tilde{C}_2}/{T}
+\tilde{C}_3)^2t}dt}=\frac{({\tilde{C}_2}/{T}
+\tilde{C}_3)^2}{1-e^{-({\tilde{C}_2}/{T}+\tilde{C}_3)^2\frac{T}{2}}}\leq ({\tilde{C}_2}/{T}+\tilde{C}_3)^2+\frac{2}{T}.
\end{equation}

Fact four: By (\ref{wangpuchong4.5}),
we can apply Theorem \ref{th302} with (\ref{YUANGYUAN4.6})
to get that
\begin{equation}\label{wang4.37}
\|f_j\|_{L^2(\omega)}\leq
e^{\tilde{C}_1\big(1+\gamma_0^{\frac{2}{3}}
+\sqrt{\lambda_{M}}\big)}\Big(\frac{\gamma_0}{1-e^{-\gamma_0 \frac{T}{2}}}+\frac{\varepsilon_0}{\alpha_M}\Big)\|\xi_j\|.
\end{equation}

Now,  by (\ref{e422}), (\ref{e424}) and (\ref{e425}), we obtain that
\begin{eqnarray}\label{e426}
\|f_j\|_{L^2(\omega)}
&\leq& e^{\tilde{C}_1\big(1+\gamma_0^{\frac{2}{3}}
+\frac{\tilde{C}_2}{T}+\tilde{C}_3\big)}
\Big(\gamma_0+\tilde{C}_3^2+\frac{4+2\tilde{C}_2\tilde{C}_3}{T}
+\frac{\tilde{C}_2^2}{T^2}\Big)\|\xi_j\|\nonumber\\
&\leq& e^{\tilde{C}_4\big(1+\frac{1}{T}\big)}e^{\tilde{C}_5
\big(1+\frac{1}{T}\big)\gamma}\|\xi_j\|,
\end{eqnarray}
for some constants  $\tilde{C}_4$ and $\tilde{C}_5$  depending on $\gamma_0$, $\Omega$, $\omega$ and $d$, but independent of $\gamma$ and $T$.

Similarly, with the aid of Theorem \ref{th304}, we can verify that
\begin{eqnarray}\label{e427}
\|h_j\|_{L^2(\omega_1)}
\leq e^{\tilde{C}_6\big(1+\frac{1}{T}\big)+\frac{\gamma_0T}{4}}
e^{\tilde{C}_7\big(1+\frac{1}{T}\big)\gamma}\|\xi_j\|,
\end{eqnarray}
for some constants $\tilde{C}_6$ and $\tilde{C}_7$  depending on $\gamma_0$, $\Omega$, $\omega$ and $d$, but independent on $\gamma$ and $T$.

\vskip 5pt
{\it Step 2.  We get an upper bound for $\|\mathcal{F}_T\|$.}

By (\ref{e402}),  (\ref{e421}), (\ref{e426}) and (\ref{e427}), we can easily check that
\begin{eqnarray*}
&&\|\mathcal{F}_T(v)\|_{\omega}
=\Big\|\sum_{j=1}^{N}\langle \mathds{1}_{\omega_1}v, \
\mathds{1}_{\omega_1}h_{j}\rangle f_{j}\Big\|_{\omega}\nonumber\\
&\leq& N \max_{1\leq j\leq N}\|h_j\|_{\omega_1}\max_{1\leq j\leq N}\|f_j\|_{\omega}\|v\|_{\omega_1}\nonumber\\
&\leq& e^{\tilde{C}_8\big(1+\frac{1}{T}\big)+\frac{\gamma_0T}{4}}
e^{\tilde{C}_9\big(1+\frac{1}{T}\big)\gamma}\|v\|_{\omega_1}\;\;\mbox{for all}\;\; v\in L^2(\omega_1),
\end{eqnarray*}
where $\tilde{C}_8$ and $\tilde{C}_9$ are constants depending on $\gamma_0$, $d$, $\Omega$, $\omega$ and $\omega_1$, but independent of $\gamma$ and $T$. This leads to
 the second inequality in (\ref{e406}).
\vskip 5pt

\noindent {\it Step 3. We get a lower bound for  $\|\mathcal{F}_T\|$.}

We aim to show the first inequality in (\ref{e406}), as well as (\ref{WANGGS4.11}).
Arbitrarily fix $v\in L^2(\omega_1)$.
Let
\begin{equation}\label{wanG4.40}
\zeta^v:=\sum_{j=1}^{N}\langle v, h_j\rangle_{\omega_1}\xi_j.
\end{equation}
First we recall that   $h_1\neq 0$ (see (iii) of Remark \ref{wangremark4.1}).
Several facts are given in order.

Fact one:
Consider the problem $(SNP)$, given by (\ref{e301}) where
\begin{equation}\label{YUANYUAN4.44}
T_1=0,\;T_2=T/2,\;\zeta=\zeta^v,\;\varepsilon=\sqrt{N}\varepsilon_0,\;
M\;\mbox{given by}\;(\ref{wang4.2}),
\end{equation}
Write   $\hat{f^v}$ for its minimal norm control.
Then we have that
\begin{equation}\label{e432}
\|\mathcal{F}_T(v)\|_{\omega}\geq \|\hat{f^v}\|_{\omega}.
\end{equation}
Indeed, the solution $y(\cdot; \zeta^v, \mathcal{F}_T(v))$ to equation:
\begin{eqnarray*}
 \left\{\begin{array}{ll}
 y'(t)-Ay(t)=-\mathds{1}_\omega \mathcal{F}_T(v), t\in (0,\infty),\\
  y(0)=\zeta^v
\end{array}\right.
\end{eqnarray*}
satisfies that
\begin{eqnarray}\label{e431}
&&\|P_My({T}/{2}; \zeta^v, -\mathcal{F}_T(v))\|\nonumber\\
&=&\Big\|\sum_{j=1}^{N}\Big[\langle v,\; h_j\rangle_{\omega_1} P_M\Big(e^{A\frac{T}{2}}\xi_j+\int_0^{\frac{T}{2}}
e^{A(\frac{T}{2}-t)}\mathds{1}_\omega f_jdt\Big)\Big]\Big\|
\leq \sqrt{N}\varepsilon_0\|\zeta^v\|.
\end{eqnarray}
Hence, the control $-\mathcal{F}_T(v)$ is admissible to
the above problem $(SNP)$. This, along with  the optimality of $\hat{f^v}$,
       leads to (\ref{e432}).

Fact two: We have
that
\begin{eqnarray}\label{e433}
\|\hat{f^v}\|_{\omega}
  \geq \frac{1}{3\alpha_1}e^{-\gamma T}\big(1-\frac{1}{3}e^{-\big(\gamma+\frac{4}{3}\gamma_0+\hat{c}_p\big)T}\big)|\langle v, h_1\rangle_{\omega_1}|,\;\;\mbox{when}\;\;\langle v, h_1\rangle_{\omega_1}\neq 0,
\end{eqnarray}
where $\alpha_1$ is given by (\ref{ggsswang3.17}) (with $T_1=0$, $T_2=T/2$).
To show (\ref{e433}), we first claim that
\begin{equation}\label{wanG4.44}
\|P_Me^{A\frac{T}{2}}\zeta^v\|-\varepsilon_0\sqrt{N}\|\zeta^v\|>0,
\;\;\mbox{when}\;\;\langle v, h_1\rangle_{\omega_1}\neq 0.
\end{equation}
Indeed, since $M\geq N$ (see (\ref{wang4.4})), by (\ref{wanG4.40}),
 (\ref{wang4.1})
 and (\ref{wang4.3}),
  after some direct computations, we see that
\begin{eqnarray*}
&& \frac{1}{\alpha_1}\big(\|P_Me^{A\frac{T}{2}}\zeta^v\|-\varepsilon_0 \sqrt{N}\|\zeta^v\|\big)\nonumber\\
&\geq&  \frac{1}{\alpha_1}\Big[e^{-\lambda_N \frac{T}{2}}\Big(\sum_{j=1}^{N}|\langle v, h_j\rangle_{\omega_1}|^2\Big)^{\frac{1}{2}}-\varepsilon_0 \sqrt{N}\Big(\sum_{j=1}^{N}|\langle v, h_j\rangle_{\omega_1}|^2\Big)^{\frac{1}{2}}\Big]\nonumber\\
&\geq& \frac{1}{3\alpha_1}e^{-\gamma T}\big(1-\frac{1}{3}e^{-\big(\gamma+\frac{4}{3}\gamma_0+\hat{c}_p\big)T}\big)|\langle v, h_1\rangle_{\omega_1}|>0.
\end{eqnarray*}
This leads to (\ref{wanG4.44}).
Now by (\ref{wanG4.44}), we can apply  Theorem \ref{th302},
with (\ref{YUANYUAN4.44}),
to obtain  (\ref{e433}).

Fact three: Consider the problem $(INP)$ given by (\ref{e319})
with (\ref{YUANGYUAN4.7}) where $j=1$.
 Then $h_1$ is the minimal norm control to this problem.
Because of (\ref{wangpuchong4.5}), we can apply  Theorem \ref{th304} to the above $(INP)$ to see that
\begin{eqnarray}\label{e434}
 \|h_1\|_{\omega_1}
&\geq& e^{\frac{\lambda_1T}{4}}(\|e^{A\frac{T}{2}}\xi_1\|-\varepsilon_0)\nonumber\\
&=& e^{\frac{\lambda_1T}{4}}
\Big(e^{-\frac{\lambda_1T}{2}}
-\frac{1}{9\sqrt{N}}e^{-(2\gamma+\frac{3}{2}\gamma_0+\hat{c}_p)T}\Big)\geq \frac{8}{9}e^{\frac{-\gamma_0T}{4}}.
\end{eqnarray}
(Here we used that $-\gamma_0\leq\lambda_1<0$.)

Fact four: One can easily check that
\begin{equation}\label{WanG4.46}
\frac{1}{\alpha_1}\geq \frac{1}{\int_0^{T/2}e^{\gamma_0(\frac{T}{2}-t)}dt}=\frac{\gamma_0}{e^{\frac{\gamma_0T}{2}}-1}.
\end{equation}
\vskip 5pt

Now
it follows from
(\ref{e433}),
(\ref{e432}), (\ref{e434}) and (\ref{WanG4.46}) that
\begin{equation}\label{e436}
\|\mathcal{F}_T\|\geq \|\mathcal{F}_T(h_1/\|h_1\|_{\omega_1})\|_\omega\geq  \frac{16\gamma_0}{81(e^{\frac{\gamma_0T}{2}}-1)}e^{-(\frac{\gamma_0}{4}+\gamma) T}
:=m_1(T).
\end{equation}
One can easily check that
\begin{equation}\label{WANGPUCHONG4.49}
m_1(T)=O({1}/{T})\;\;\mbox{as}\;\;T\rightarrow 0.
\end{equation}
However,
$m_1(T)$ cannot play the role of $\alpha(T)$ in (\ref{e406}), since it does not
have the second property in (\ref{WANGGS4.11}).

To get the desired $\alpha(T)$, we need another lower bound for $\|\mathcal{F}_T\|$.
For this purpose, we arbitrarily fix $v\in L^2(\omega_1)$ with $\langle v, h_1\rangle_{\omega_1}\neq 0$.
By (\ref{e427}), it follows  that
\begin{eqnarray}\label{e438}
\Big(\sum_{j=1}^{N}|\langle v, h_j\rangle_{\omega_1}|^2\Big)^{\frac{1}{2}}
\leq \|v\|_{\omega_1}\sqrt{N}e^{\tilde{C}_6(1+\frac{1}{T})+\frac{\gamma_0T}{4}}e^{\tilde{C}_7(1+\frac{1}{T})\gamma}.
\end{eqnarray}
Meanwhile, by (\ref{wanG4.44}),  we can apply  Theorem \ref{th302},
with (\ref{YUANYUAN4.44}),
 to get that
\begin{eqnarray*}
\|\hat{f^v}\|_{\omega}\geq
\frac{1}{\alpha_1}(\|P_Me^{A\frac{T}{2}}\zeta^v\|-\varepsilon_0 \sqrt{N}\|\zeta^v\|)
\end{eqnarray*}
This, along with  (\ref{e438}), indicates that
\begin{eqnarray*}
\|\hat{f^v}\|_{\omega}
&\geq& \frac{1}{\alpha_1}\Big[\Big(\sum_{j=1}^{N}\big|e^{-\frac{\lambda_j T}{2}}\langle v, h_j\rangle_{\omega_1}\big|^2\Big)^{\frac{1}{2}}
-\sqrt{N}\varepsilon_0\Big(\sum_{j=1}^{N}\big|\langle v, \; h_j\rangle_{\omega_1}\big|^2\Big)^{\frac{1}{2}}\Big]\nonumber\\
&\geq& \frac{1}{\alpha_1}e^{-\lambda_1 \frac{T}{2}}|\langle v, h_1\rangle_{\omega_1}|
-\frac{\sqrt{N}}{9\alpha_1}e^{-\big(2\gamma+\frac{4}{3}\gamma_0+\hat{c}_p\big)T
+\tilde{C}_{10}(1+\frac{1}{T})(1+\gamma)+\frac{\gamma_0T}{4}}\|v\|_{\omega_1}.
\end{eqnarray*}
(Here $\tilde{C}_{10}$ is a constant depending on $\gamma_0, \Omega, \omega$, $\omega_1$  and $d$, but independent on $T$ and $\gamma$.)
   From this, (\ref{e432}), and  the first inequality in (\ref{e434}), we find that
\begin{eqnarray}\label{WANGGS4.52}
\|\mathcal{F}_T\|&\geq& \|\mathcal{F}_T(h_1/\|h_1\|_{\omega_1})\|_{\omega}\nonumber\\
&\geq& \frac{-\lambda_1e^{-\frac{3\lambda_1 T}{4}}}{e^{-\frac{\lambda_1 T}{2}}-1}+\frac{\lambda_1e^{-\frac{\lambda_1 T}{4}}}{9\sqrt{N}(e^{-\frac{\lambda_1 T}{2}}-1)}e^{-(2\gamma+\frac{4}{3}\gamma_0+\hat{c}_p)T}
\nonumber\\&-&\frac{\sqrt{N}}{9\alpha_1}
e^{-(2\gamma+\frac{13}{12}\gamma_0+\hat{c}_p)T
+\tilde{C}_{10}(1+\frac{1}{T})(1+\gamma)}\nonumber\\
&\geq& m_2(T),
\end{eqnarray}
where
\begin{equation}\label{WANGGS4.53}
\ \ m_2(T):=-\lambda_1e^{-\frac{\lambda_1 T}{4}}+\frac{\lambda_1}{1-e^{\frac{\lambda_1 T}{2}}}e^{-(2\gamma-\frac{\lambda_1 }{4})T}-\frac{\sqrt{N}}{9\alpha_1}
e^{-2\gamma T
+\tilde{C}_{10}(1+\frac{1}{T})(1+\gamma)}
\end{equation}
From (\ref{WANGGS4.53}), one can easily see that
\begin{equation}\label{e441}
m_2(T)=O(e^{-\frac{\lambda_1 T}{4}}), \mathrm{as}\ T\rightarrow +\infty.
\end{equation}
For each $T>0$, we let $\alpha(T):=\max\{m_1(T), m_2(T)\}$.
Then by (\ref{e436}) and (\ref{WANGGS4.52}), we find that this $\alpha(T)$ satisfies the first inequality in (\ref{e406}). By (\ref{WANGPUCHONG4.49}) and (\ref{e441}), we see that this $\alpha(T)$ satisfies  (\ref{WANGGS4.11}).
Thus we end the proof of Theorem \ref{theoremonestimate}.
\end{proof}

\end{document}